\definecolor{color11}{HTML}{aedea7}
\definecolor{color12}{HTML}{74c476}
\definecolor{color13}{HTML}{37a055}
\definecolor{color14}{HTML}{c6c6c6}
\definecolor{color15}{HTML}{969696}
\definecolor{color16}{HTML}{686868}
\definecolor{color21}{HTML}{98d594}
\definecolor{color22}{HTML}{4bb062}
\definecolor{color23}{HTML}{157f3b}
\definecolor{color24}{HTML}{00441b}
\definecolor{color31}{HTML}{bce4b5}
\definecolor{color32}{HTML}{8ed08b}
\definecolor{color33}{HTML}{57b668}
\definecolor{color34}{HTML}{2c944c}
\definecolor{color35}{HTML}{057130}
\definecolor{color36}{HTML}{00441b}
\newtheorem{theorem}{Theorem}[section]
\newaliascnt{lemma}{theorem}
\newaliascnt{corollary}{theorem}
\newaliascnt{proposition}{theorem}
\newtheorem{proposition}[proposition]{Proposition}
\newaliascnt{remark}{theorem}
\newtheorem{remark}[remark]{Remark}
\newaliascnt{assumption}{theorem}
\newtheorem{assumption}[assumption]{Assumption}
\newaliascnt{definition}{theorem}
\newaliascnt{example}{theorem}
\newtheorem{example}[example]{Example}
\newcommand{\diag}{\textrm{diag}}
\newcommand{\rhoh}{\hat{\rho}}
\newcommand{\rhob}{\bar{\rho}}
\newcommand{\R}{\mathbbm{R}}
\newcommand{\PP}{\mathbbm{P}}
\newcommand{\E}{\mathbbm{E}}
\newcommand{\Z}{\mathbbm{Z}}
\newcommand*\diff{\mathop{}\!\mathrm{d}}
\newcommand{\delims}[4]{\mathopen#1#2#4\mathclose#1#3}
\newcommand{\norm}[2][]{\delims{#1}\lVert\rVert{#2}}	% norm
\newcommand{\red}[1]{\textcolor{red}{#1}}
\let\lim\relax
\DeclareMathOperator*{\lim}{\vphantom{p}lim}
\definecolor{light-gray}{gray}{0.65}
\definecolor{darkgreen}{HTML}{44CC44}
\author{Michel Mandjes and Jaap Storm}
\title[A diffusion-based analysis of   a multi-class road traffic network]{A diffusion-based analysis of  \\ a multi-class road traffic network}
\date{\today}
\begin{document}

\begin{abstract}
This paper studies a stochastic model that describes the evolution of vehicle densities in a road network. It is consistent with the class of (deterministic) kinematic wave models, which describe traffic flows on the basis of conservation laws that incorporate the macroscopic fundamental diagram (a functional relationship between vehicle density and flow). Our setup is capable of handling multiple types of vehicle densities, with general macroscopic fundamental diagrams, on a network with arbitrary topology. 

\noindent Interpreting our system as a spatial population process, we derive, under a natural scaling, fluid and diffusion limits. More specifically, the vehicle density process can be approximated with a suitable Gaussian process, which yield accurate normal approximations to the joint (in the spatial and temporal sense) vehicle density process. The corresponding means and variances can be computed efficiently. Along the same lines, we develop an approximation to the vehicles' travel-time distribution between any given origin and destination pair. Finally, we present a series  of numerical experiments that demonstrate the accuracy of the approximations and  illustrate the usefulness of the results.

%This paper studies a stochastic model for the evolution of vehicle densities through a road traffic network with multiple types of vehicles. The underlying dynamics are consistent with conservation laws that incorporate the multi-class macroscopic fundamental diagram (which describes the functional relation between the vehicle density and velocity). By discretizing space, the model can be phrased in terms of a spatial population process, thus allowing the application of classical scaling analysis. More specifically, it follows that under a diffusion scaling, the vehicle density process can be approximated by an appropriate Gaussian process. A related Gaussian approximation can be used to evaluate the travel time distribution between a given origin and destination. 

\vspace{3mm}

\noindent
{\sc Keywords.} {Conservation laws $\circ$ Diffusion approximation $\circ$ Functional central limit theorem $\circ$ Functional law of large numbers $\circ$ Fundamental diagram $\circ$ Road traffic networks $\circ$ Traffic flow theory $\circ$ Travel time}

\vspace{3mm}

\noindent
{\sc Affiliations.} 
Michel Mandjes ({\tt \footnotesize m.r.h.mandjes@uva.nl}) is with Korteweg-de Vries Institute for Mathematics, University of Amsterdam, Science Park 904, 1098 XH Amsterdam, the Netherlands. He is also with E{\sc urandom}, Eindhoven University of Technology, Eindhoven, the Netherlands, and Amsterdam Business School, Faculty of Economics and Business, University of Amsterdam, Amsterdam, the Netherlands. Jaap Storm ({\tt \footnotesize p.j.storm@vu.nl}) is with {the} Department of Mathematics, Vrije Universiteit Amsterdam,
De Boelelaan {1111}, 1081 HV Amsterdam, the Netherlands. Their research is partly funded by NWO Gravitation project N{\sc etworks}, grant number 024.002.003.

\iffalse \vb
\noindent
{\sc Acknowledgments.} 
\fi

\end{abstract}

\maketitle

\section{Introduction}
A substantial body of literature focuses on describing and predicting the dynamics of vehicles on road traffic networks.
A broad range of traffic flow models has been developed, each of them focusing on specific aspects.  Their ultimate goal lies in the development of mechanisms that effectively control streams of vehicles.

The majority of the existing traffic flow models is of a deterministic nature. These range from macroscopic models (typically in the form of partial differential equations that describe vehicles as continuous flows that obey physical laws) to microscopic models (incorporating the driving behavior pertaining to individual vehicles). However, as was pointed out in, e.g.,~\cite{QU2017}, besides physical laws, traffic flows are also strongly affected by various microscopic  variables, such as the different perceptions, moods, responses, and driving habits of individual car drivers. This realization has led to the consensus \cite[Section~1]{QU2017} that such microscopic variables should be modeled as random variables. Thus, to accurately describe streams of vehicles in a road traffic network, {\it stochastic} traffic flow models are needed. In addition, as argued in great detail in \cite{JL2012}, probabilistic traffic flow models are particularly useful from the viewpoint of traffic simulation, estimation, and online control. 

In relation to the choice of a suitable probabilistic model, as was pointed out in \cite{JL2012}, a frequently used type of stochastic traffic flow models  are macroscopic deterministic models with added noise. These, however, result in an inconsistency with the corresponding deterministic models in the case of non-linear dynamics, and possibly lead to the undesirable feature of negative sample paths. Truncation of the noise, to prevent negative sample paths from happening, would only exacerbate the above-mentioned inconsistency. Turning to microscopic stochastic traffic flow models allows for explicit modeling of stochastic behavior.  Examples are cellular automata models  \cite{maerivoet2005traffic}, with a seminal example being the celebrated Nagel-Schreckenberg model, and car following models \cite[Section 3]{WageningenKessels2015genealogy}. However, due to the complexity of the underlying dynamics, for such models the computation of the vehicle density distribution at a given time is typically intractable for instances of a realistic size.

An approach frequently relied upon in macroscopic models, based on conservation laws, makes use of the so-called {\it fundamental diagram}, which describes the functional relation between the vehicle density and velocity. The origin of such {\it kinematic wave models}, which succeed in replicating empirical traffic phenomena, lies in the seminal papers by Lighthill and Whitham \cite{lw1955} and Richards \cite{rich1956}. In the kinematic wave framework, the flow of traffic mass is modeled by a conservation law, with the additional feature that the velocity of mass at a position in space is a function of the density of this mass. Obviously, to facilitate accurate performance predictions, having a handle on the precise shape of the fundamental diagram is crucial. In particular, researchers try to reproduce physical phenomena like the \textit{scatter} observed in empirical studies, the \textit{drop in capacity} at the onset of congestion, \textit{hysteresis} while accelerating and decelerating, and the effect of multiple types of vehicles; see{, e.g.,~}\cite{cb2003, WageningenKessels2015genealogy}. 

While traditionally the focus was on single-class models \cite{daganzo1994cell,drake,smul}, in which all vehicles are essentially exchangeable, in {practice, }multiple classes should be distinguished. This explains why more recently deterministic multi-class models \cite{cb2003, li2008, ww2002}  have been developed, which capture the heterogeneity of vehicles and drivers.
However, these models come with new complications, in that they do not automatically satisfy the \textit{anisotropic} (informally meaning that information travels slower than the fastest class of vehicles) and even \textit{hyperbolic} (informally meaning that any single vehicle has only local influence on the system dynamics) properties of conservation laws. As discussed in{, e.g.,~}\cite{DA95, WageningenKessels2015genealogy}, realistic traffic flow models should obey these properties.

\vspace{3mm}

The main conclusion of the above, is that there is a clear demand for stochastic multi-class traffic flow models that are flexible enough to cover various generic physical phenomena. The discrete-space stochastic model proposed in \cite{JL2012,JL2013}, is referred to by \cite{QU2017} as one of the few models that provides an explicit expression for the vehicle density distribution. In addition, it is consistent with kinematic wave models, and it aligns with the above-mentioned physical effects that are inherent to the fundamental diagram. {While this model has these attractive properties, several significant improvements can be made. In the first place, the focus is exclusively on the class of fundamental diagrams that involve just single-type vehicle densities, not covering the practically relevant multi-type case. In the second place, the model considers isolated road segments rather than more general networks. The third issue relates to the underlying analysis that is based on a diffusion approach, leading to a Gaussian approximation of vehicle densities. While the claimed diffusion, under the proposed scaling, is correct, there is a need for addressing the formal details of the mathematical underpinnings. Finally, in \cite{JL2012,JL2013} the focus is fully on describing the distribution of the vehicle densities, whereas no attention is paid to computing the distribution of travel times between given origin-destination pairs.}

Our work concerns a rigorous analysis of a generalized version of the cellular stochastic traffic flow model that was introduced and analyzed in \cite{JL2012,JL2013}, remedying the issues identified in the previous paragraph. More specifically, the contributions of our paper are the following:
\begin{itemize}
\item[$\circ$]
Unlike in the single-class model of \cite{JL2012,JL2013}, we focus on a setup capable of handling a {\it multi-class} macroscopic fundamental diagram. While in our presentation we focus on a road segment consisting of multiple cells, we point out how the analysis naturally extends to  networks of arbitrary size and structure. 
\item[$\circ$]
Moreover, we embed the model in a rigorous mathematical framework that allows us to appeal to the theory of {\it spatial population processes} \cite{kurtz1981approximation}. We show how this well-developed machinery facilitates the formal establishment, under a natural  scaling, of {\it fluid} and {\it diffusion limits} that are consistent with kinematic wave traffic flow theory. 
\item[$\circ$] The fluid limit provides accurate 
{\it approximations} for the means of the vehicle densities, while the diffusion limit can be used to approximate 
the corresponding correlations  (both in the spatial and temporal sense). Informally, these fluid and diffusion approximations can be seen as a law of large numbers and a central limit theorem, respectively, at the sample-path level. Our results allow for generic macroscopic fundamental diagrams (fulfilling a mild regularity assumption), and provide explicit expressions for means and (co-)variances that can be numerically evaluated in an efficient manner.
\item[$\circ$]
Whereas \cite{JL2012,JL2013} focus on vehicle densities only, we point out how diffusion results can be used to produce accurate approximations of the distribution of the {\it travel time}  experienced by vehicles moving through the network. 
\item[$\circ$] The accuracy of our approximation procedure is demonstrated through a series of {\it numerical experiments}. In addition, we  show that our methodology is able to reproduce known traffic phenomena such as forward propagation, backward moving traffic jams and shockwave formation, with a per-phenomenon consistent estimate for the 
variances and covariances of traffic densities.
\end{itemize}

The results obtained in this paper can be used to numerically evaluate the impact of various traffic control mechanisms. For instance, as imposing a speed limit effectively means an adaptation of the fundamental diagram, we can assess the efficacy of such a measure. Importantly, the framework used is highly flexible, in that the fundamental diagram needs to {fulfill} only mild regularity assumptions. This flexibility can be exploited when studying the impact of technological developments on traffic dynamics. For instance, one can evaluate scenarios in which a certain fraction of the cars is self-driving; an obvious prerequisite is the availability of (a proxy of) the corresponding fundamental diagram.

\vspace{3mm}

This paper is organized as follows. So as to shed light on the properties stochastic traffic flow models should {fulfill}, we first provide in Section \ref{KWM} a brief account of kinematic wave models. We pay attention to this class of models because, while being essentially standard for many readers with a background in transportation research, researchers with a background in operations research and applied probability may be less familiar with it. Building on kinematic wave models, we define in Section \ref{MD} our multi-class counterpart of the discrete-space model introduced in \cite{JL2012,JL2013}. Importantly, the setup proposed is highly flexible, and thus covers a {wide} variety of macroscopic fundamental diagrams proposed in the literature. Then, in Section \ref{SL}, we define our scaling and prove the fluid and diffusion limit results for the vehicle densities, yielding a Gaussian approximation for the corresponding distribution. Then it is shown in Section \ref{TT} how related diffusion results lead to an approximation for the travel-time distribution. Numerical experiments are provided in Section \ref{NE}, showing the method's accuracy and its capability to reproduce well-known phenomena. Section \ref{EGR} presents an account of extensions, generalizations, and ramifications. Concluding remarks are presented in Section \ref{CR}. 

\section{Theory of Kinematic Wave Models}\label{KWM}

The stochastic traffic flow model, that will be the main object of study in this paper, will be defined in Section \ref{MD}. There we define our model in such a way that the mean dynamics obey a conservation law, {which} is the fundamental modeling principle in every kinematic wave traffic flow model. To provide the reader with the necessary background, we review in this section the basics of kinematic wave models and corresponding conservation laws.

\subsection{Kinematic Wave Models and Conservation Laws}

Kinematic wave models are a class of macroscopic traffic flow models that consider a road segment without any intermediate sources or sinks as a continuous space, and study the propagation of vehicle density (defined as the number of vehicles per unit length of road) over the road segment. The absence of sources and sinks implies that traffic density should be preserved over the segment, which {suggests} the use of conservation laws to describe the evolution of traffic density over the segment as {\it fluid}.

In our framework we consider $m$ different types of vehicles. Let, for $j\in\{1,\ldots, m\}$, the {type-$j$} vehicle density be given by $\rho_j(x,t)$ and the {type-$j$} velocity by $v_j(x,t)$, where $t$ denotes time and $x$  the position on the segment. Conservation of mass (or density) in an arbitrary part of the road segment $[x_1,x_2]$, in a time interval $[t_1,t_2]$, is expressed as
\begin{equation}\label{Eqn: discrete conservation}
\int_{x_1}^{x_2} \rho_j(x,t_2) \diff x - \int_{x_1}^{x_2} \rho_j(x,t_1) \diff x 
= \int_{t_1}^{t_2} \rho_j(x_1,t) v_j(x_1,t) \diff t - \int_{t_1}^{t_2} \rho_j(x_2,t) v_j(x_2,t) \diff t, 
\end{equation}
with $j\in\{1,\ldots,m\}$; cf.\ \cite[Section 2.1]{leveque1992}. Here it is implicit that traffic {moves} from $x_1$ to $x_2$.  

Now, in traffic flow theory there is the fundamental, and empirically backed assumption, that the velocity $v_j(x,t)$ is a function of {\it all}\, the type  densities $\rho_k(x,t)$, where $k\in\{1,\ldots,m\}$. In the literature, this functional {relationship} is referred to as the (\textit{macroscopic}) \textit{fundamental diagram} (MFD),  usually expressed as 
\[
	q_j(\rho(x,t)) = \rho_j(x,t)\, v_j(\rho(x,t)),
\]
where $q_j$ is the flow of {type-$j$} density, and $\rho$ is the length-$m$ vector of traffic densities. Combining \eqref{Eqn: discrete conservation} with the MFD allows the derivation  \cite[Section 2]{leveque1992} of the following system of conservation laws:
\begin{equation}\label{Eqn: system conservation laws}
	 \frac{\partial \rho_j(x,t)}{\partial t} + \frac{\partial q_j(\rho)}{\partial \rho} \frac{\partial \rho(x,t)}{\partial x} = 0,\hspace{12mm}
		 \rho_j(x,0) = \rho_{j,0}(x),
\end{equation}
where ${\partial q_j(\rho)}/{\partial \rho}$ is to be interpreted as the gradient of $q_j$ and $\rho_{j,0}(x)$ is some initial vehicle density (as a function of the position $x$).

\subsection{The Macroscopic Fundamental Diagram}
As mentioned in the introduction, the fundamental diagram describes flow as a function of the vehicle density. 
The generally accepted view is that flow is an (approximately) concave function of the overall density, which is zero on the boundaries of some compact domain and positive on its interior. The reasoning behind this is simple: when traffic density is low, vehicles are not obstructed and can thus drive at maximum speed. As the density increases, vehicles will be obstructed to a greater extent due to congestion, thus lowering their velocity. Ultimately, if the traffic density becomes too {high}, then vehicles do not have enough space in front of them, resulting in zero velocity and flow.

The first fundamental diagram, for single-class traffic, can be found in a pioneering study by Greenshields \cite{green}. Since then, researchers have tried to find functional forms that reproduce various traffic phenomena as well as possible; see for more background  \cite[Section 5]{maerivoet2005traffic}. For single-class traffic, three notable MFDs can be found in \cite{daganzo1994cell,drake,smul}. In recent years, to capture effects due to mixed vehicle traffic, multi-class MFDs have been proposed; see{, e.g.,~}\cite{BC, cb2003, li2008,NGO,ww2002,ZHA}. 

\subsection{Solutions for Conservation Laws, Relation with  Stochastic Traffic Flow Model}

The functional forms for $\rho$ and $q$ that are used in practice are typically not differentiable, so that the problem in \eqref{Eqn: system conservation laws} has no strong solutions. One therefore resorts to weak solutions, satisfying additional conditions ({e.g.,} \textit{Rankine-Hugoniot} and \textit{entropy conditions}) to ensure uniqueness and correct physical behavior. For a mathematical treatment of conservation laws see, for instance, \cite[Section 3.4 and Ch.\ 11]{evans10}. 
Whereas it is often possible to show existence of weak solutions of \eqref{Eqn: system conservation laws}, they rarely have explicit analytical expressions. Therefore, for practical applications, one usually relies on numerical methods. In traffic flow theory the most widely used method is the {\it Godunov method}, \cite[Section 13]{leveque1992}, which has many attractive features, assuming that the Courant-Friedrichs-Lewy (CFL) condition (\cite[Eqn.\ (10.55), Eqn.\ (13.11)]{leveque1992}) is satisfied; see \cite[paragraph above Eqn.\ (18)]{cb2003}.

The Godunov method exactly solves \eqref{Eqn: system conservation laws} with an approximate, piece-wise constant, initial condition. The $x$-axis, which is the road segment in our case, is discretized into $d$ cells (which can be done according to a variable mesh), and the initial density is taken constant in each cell, by integrating the ($m$-dimensional) initial condition $\rho(x,0)$ between the boundaries of each cell. This defines a set of $d-1$ Riemann problems, the solution of which describes the flow between each of the cell boundaries exactly. As such, one can describe the exact evolution of the system with the equation, for $i\in\{2,\ldots,d\}$,
\begin{equation}\label{Eqn: Godunov eqn}
	\rho(x_i,t+\Delta t) =  \rho(x_i,t) + \frac{\Delta t}{\Delta x_i} \left( q_{i-1} - q_{i} \right),
\end{equation}
where $\rho(x_i,t)$ is now an $m$-dimensional vector of densities at time $t$ in the $i$-th cell, $\Delta t$ and $\Delta x_i$ {are, respectively, the lengths of the discrete time-steps and the mesh-width of cell~$i$}, and $q_{i}$ is the ($m$-dimensional) vector of flows on the boundary from cell~$i$ to cell~$i+1$, given by the solution of the corresponding Riemann problem. 
To be able to use the Godunov scheme, the challenge is to give a complete description of  \textit{discrete flux-function} $q_{i}$. In {general,} there is no explicit solution for the associated Riemann problem, but there are conditions under which a solution is guaranteed for small time steps \cite[Thm.\ 4 in Section 11]{evans10}. 

We finish this section by mentioning that an approximate solution of \eqref{Eqn: system conservation laws}, using the Godunov method, is also known as the {\it cell transmission model} (CTM). This concept, due to Daganzo \cite{daganzo1994cell}, relates to a discrete-time, discrete-space traffic flow model. One can show that \eqref{Eqn: system conservation laws} can be recovered from the CTM  by letting the cell lengths go to $0$ \cite{JL2012}. In our paper, we take an arbitrary (possibly multi-class) MFD as our starting point. We use the associated discrete flux-function to define a stochastic traffic flow model, such that the mean dynamics are given by the CTM. In {\autoref{SL},} we approximate the variance of this process through a diffusion limit, which allows for studying the distributions of vehicle densities in various types of dynamics, as will be illustrated in the numerical experiments of \autoref{NE}. 

\section{Model Definition}\label{MD}
In this section we introduce the stochastic traffic flow model as a strong Markov process, for which we subsequently define the corresponding state space and the infinitesimal generator. To keep the exposition as clear as possible, in this paper we primarily concentrate on a single road segment. However, the model, the underlying mathematical theory, and scaling results we present for this road segment setup carry over to a general network setting, as will be pointed out in \autoref{EGR} (along with various other extensions). We conclude the section by giving two relevant examples of kinematic wave models which we extend into stochastic traffic flow models, and which will be used later in the paper when numerically illustrating our results.

\subsection{State Space}
Consider a road segment without any intermediate sources or sinks, on which $m$~different types of vehicles are defined. We divide the road in $d$~cells, where $\ell_i$ is the length of cell $i$, for $i\in\{1,\ldots,d\}$. We denote by $X_{ij}(t)$ the number of {type-$j$} vehicles, for $j\in\{1,\ldots,m\}$, that are in cell~$i$ at time~$t \in [0,\infty)$. In the context of our road-traffic {model,} it is evident that the number of {type-$j$} vehicles in cell~$i$ has an upper bound, which we denote $X^{\rm jam}_{ij}$. As a consequence, $X_{ij}(t) \in \{0,1,\ldots,X^{\rm jam}_{ij}\}$. In the {sequel,} we denote $X(t)$ for the $md$-dimensional random vector with entries $X_{ij}(t)$.

Equivalently to $X_{ij}(t)$, we can consider the {type-$j$} vehicle density in cell~$i$ at time~$t$, which we denote $\rho_{ij}(t)$. It is defined as
\begin{equation}\label{Eqn: density <-> number of cars}
	\rho_{ij}(t) := \frac{X_{ij}(t)}{\ell_i},
\end{equation}
taking values in $\{0,1/\ell_i,2/\ell_i,\ldots,X_{ij}^{\rm jam}/\ell_i\}$. In the literature $\rho_{ij}^{\rm jam} := X^{\rm jam}_{ij} / \ell_i$  is referred to as the \textit{jamming density}  (of {type-$j$} vehicles in cell~$i$, that is).

\subsection{Transition Rates}

To describe the flows of {type-$j$} vehicles between the cells, we assume the existence of a discrete flux-function, obtained from a general (possibly multi-class) MFD {by solving the Riemann problem}. We proceed by formally introducing such a discrete flux-function, describing the flow from cell $i$ to cell $i+1$ for each of the types $j$. Denote $q^{{\rm max}}_{ij}$ for the largest possible flow of {type-$j$} vehicles between the neighboring cells $i$ and $i+1$. The assumption below says that the flow from cell $i$ to cell $i+1$ is a function of the densities in both cells, i.e., of $(\rho_{ik})_{k}$ and $(\rho_{i+1,k})_{k}$, where the first and last cell have to be handled slightly differently. Moreover, the function is dependent on $i$, {i.e.,} on the position of a cell, to model cell-dependent MFDs. This is particularly useful if one wishes to incorporate heterogeneity between the cells due to{, e.g.,} differences in road geometry and maximum velocity.
\begin{assumption}[Macroscopic Fundamental Diagram]\label{Assumption: MFD}
For $i \in \{1,\ldots,d-1\}$, we assume that there is a function 
\begin{equation}\label{Eqn: general MFD}
	\tilde q_i : \bigtimes_{j=1}^m \left( [0,\rho^{{\rm jam}}_{ij}] \times [0,\rho_{i+1,j}^{{\rm jam}}] \right) \to \bigtimes_{j=1}^m  [0,q^{{\rm max}}_{ij}],
\end{equation}
which is Lipschitz-continuous with Lipschitz constant $K_i \in (0,\infty)$. 
We also assume that there is a function 
\begin{equation}\label{Eqn: general MFD_0}
	\tilde q_0 : \bigtimes_{j=1}^m \left( [0,\infty) \times [0,{\rho_{1j}^{{\rm jam}}}] \right) \to \bigtimes_{j=1}^m  [0,q^{{\rm max}}_{0j}],
\end{equation}
which is Lipschitz-continuous with Lipschitz constant $K_0 \in (0,\infty)$; likewise, we assume that there is a function 
\begin{equation}\label{Eqn: general MFD_d}
	\tilde q_d : \bigtimes_{j=1}^m \left( [0,\rho_{dj}^{{\rm jam}}]  \times [0,\infty) \right) \to \bigtimes_{j=1}^m  [0,q^{{\rm max}}_{dj}],
\end{equation}
which is Lipschitz-continuous with Lipschitz constant $K_d \in (0,\infty)$.
It is, in addition, assumed that the $j$-th component of $\tilde q_i$ is $0$ whenever, for $j\in\{1,\ldots,m\}$, the $j$-th argument is $0$; all components of $q_i$ are $0$ whenever,  for some $j\in\{1,\ldots,m\}$, the $(m+j)$-th argument is $\rho^{{\rm jam}}_{i+1,j}$. 
\end{assumption}
\noindent 
%We stress that, for now, the dependence of $q$ on $i$ is only through its parameters $\rho^{\rm jam}_{ij}$ and $q^{\rm max}_{ij}$, which is why we omit $i$ in the notation. In \autoref{EGR} we explain how to generalize to cell-dependent MFDs, but for clarity of exposition we consider the simplified case for now. \footnote{\textcolor{red}{\tt \tiny JS: het domein van $q$ hoort formeel af te hangen van $i$, waardoor $q$ via zijn domein afhangt van $i$. Misschien moeten we dit toch expliciet maken, want anders vraagt een lezer zich verderop af hoe we $q_{ij}$ definieren?}}
The Lipschitz property is natural to assume, since by considering the density as a continuum, it entails that the flow should be a continuous function of the density, and that the change in flow should be bounded when adding or removing a vehicle from a cell. The last part of the assumption ensures that if there are no cars in the origin cell then there can be no flow between the two cells involved; likewise if the destination cell is full. 

Using \autoref{Assumption: MFD}, we proceed by defining the infinitesimal generator that makes $\{X(t), t \geq 0\}$, or equivalently $\{\rho(t),t\geq 0\}$, a Markov process. Consider two neighboring cells~$i$ and $i+1$, and let the current density be $\rho$.  For $i \in \{1,\ldots,d-1\}$ and $j \in \{1,\ldots,m\}$, we let the time until a car of type~$j$ passes the boundary between cells~$i$ and $i+1$ be an exponentially distributed random variable with rate
\[
	q_{i,j}(\rho) := \tilde q_{i,j} \big((\rho_{ik})_{k},(\rho_{i+1,k})_{k} \big).
\]

To model arrivals at cell~1, we introduce independent Poisson processes with rate~$\lambda_j$, for  $j\in\{1,\ldots,m\}$. We let the time until a {type-$j$} vehicle enters at cell~1, for $j\in\{1,\ldots,m\}$,  be an exponentially distributed random variable with rate
\[
	q_{0,j}(\rho) := \min\left\{\lambda_j ,\sup_{x \in (\R^+)^{m}} \tilde q_{0,j} \left( x, (\rho_{1k})_k \right)\right\}.
\]
The interpretation of this formula is that cars of type~$j$ constantly arrive at rate~$\lambda_j > 0$, but that the flow  is bounded {due to} the {vehicles} already present in cell 1. 
Similarly, we let the time until a {type-$j$} vehicle leaves cell~$d$, for $j\in\{1,\ldots,m\}$, be an exponentially distributed random variable with rate
\[
	q_{d,j}(\rho) := \min\left\{\nu_j ,\sup_{x \in (\R^+)^{m}} \tilde q_{d,j} \left( (\rho_{dk})_k, x \right)\right\},
\]
where $\nu_j > 0$. The interpretation is that vehicles of type~$j$ depart at a rate that depends on the densities in cell~$d$, but that is bounded by some maximal outflow rate~$\nu_j$.
We assume that, given $\rho$, all the exponentially distributed times, as defined above, are independent.

From the above it follows that the evolution of the number of {type-$j$} vehicles in cell $i$, with $i\in\{1,\ldots,d\}$ and $j\in\{1,\ldots, m\}$ is given, for a given initial state $X(0)$, by
\begin{equation}\label{Eqn: dX}
	X_{ij}(t) = X_{ij}(0) + Y_{i-1,j}\left( \int_0^t q_{i-1,j}(\rho(s)) \diff s \right)- Y_{i,j}\left( \int_0^t q_{i,j}(\rho(s)) \diff s \right),
\end{equation}
where $Y_{i,j}(t)$ are independent unit-rate Poisson processes for $i\in\{0,\ldots,d\}$ and $j\in\{1,\ldots, m\}$. {To} see this, note that the Poisson processes $Y_{i,j}$ are counting how many cars of type~$j$ pass between the boundary between cells $i$ and $i+1$. To measure the impact on the density, we scale these processes so that they have jumps of size $1 / \ell_i$. By utilizing \eqref{Eqn: density <-> number of cars}, we have the analogous process
\begin{equation}\label{Eqn: drho}
		\rho_{ij}(t) = \rho_{ij}(0) + \frac{1}{\ell_i} Y_{i-1,j}\left( \int_0^t q_{i-1,j}(\rho(s)) \diff s \right) 
		- \frac{1}{\ell_i} Y_{i,j}\left( \int_0^t q_{i,j}(\rho(s)) \diff s \right).
\end{equation}
We have thus constructed the $dm$-dimensional Markov process $\{\rho(t), t \geq 0\}$.
%, with the state space being a subset of $\bigtimes_{i=1}{\rm d} \bigtimes_{j=1}^m [0,\rho^{\rm jam}_{ij}]$. 

\subsection{Example MFDs}
\label{sec: example MFDs}

To illustrate the flexibility of  \autoref{Assumption: MFD}, we now give two concrete examples of MFDs and their associated discrete flux-functions $\tilde q$. In \autoref{NE} we will use these examples.

\begin{example}[Daganzo MFD]\label{Example: Daganzo}{\em 
Our first example is a discrete flux-function for a single-class CTM that was proposed in \cite{daganzo1994cell}. The flux-function describes the flow between two neighboring cells $i$ and $i+1$. Since traffic flows in one direction, cell~$i$ will be sending traffic and cell~$i+1$ will be receiving traffic. We define 
the \textit{sending} function $S(\cdot)$ and the \textit{receiving} function $R(\cdot)$ by
\[
	S(\rho) := v^f \rho \wedge q^{{\rm max}}, \quad
	 R(\rho) := w(\rho^{{\rm jam}} - \rho) \wedge q^{{\rm max}}.
\]
Here $v^f$ is the velocity at which vehicles drive when they do not experience congestion (called the \textit{free-flow} regime), and $w$ is the velocity at which traffic jams move upstream. These specific functions are based on the assumption that vehicles drive with their maximum velocity $v^f$ when vehicle density is low, whereas the flow {decreases linearly when congestion occurs}. The flow between the cells is now given by
\[
	\tilde q^{\textrm{D}}(\rho_i, \rho_{i+1}) = \min \{ S(\rho_i),R(\rho_{i+1}) \},
\]
where $\rho_i$ is the density of cell $i$. In Figure~\ref{Fig: Daganzo MFD} we have plotted an example with $v^f = 60$ km/h, $w = 12$ km/h, $q^{{\rm max}} = 1800$ veh/h and $\rho^{{\rm jam}} = 180$ veh/km, with the MFD on the left, and the sending and receiving functions that characterize the discrete flux-function on the right.

It is not hard to check that $\tilde q^{\rm D}$ satisfies \autoref{Assumption: MFD}. As both $S(\cdot)$ and $R(\cdot)$ are Lipschitz, the minimum of $S(\cdot)$ and $R(\cdot)$ is Lipschitz as well. Moreover, when $\rho_i = 0$ or $\rho_{i+1} = \rho^{{\rm jam}}$, then $\tilde q^{\textrm{D}}$ equals zero.

The paradigm of a sending and receiving function can be generalized to obtain other single-class discrete flux-functions. More precisely, the functions $S(\cdot)$ and $R(\cdot)$ can be every non-negative, Lipschitz continuous function with compact support. Moreover, it can be shown that every single-class MFD has a discrete flux-function, obtained by solving the Riemann problem associated to the Godunov scheme, {which} can be written as the minimum of a sending and receiving function.

%\begin{figure}
%\label{Fig: Daganzo MFD}
%	\begin{tabular}{cc}
%		\includegraphics[width=6cm]{DaganzoMFD} &
%		\includegraphics[width=6cm]{DaganzoMFD_SendingReceiving}
%	\end{tabular}
%	\caption{Daganzo MFD (left) and corresponding sending and receiving function, determining the associated discrete flux function (right). }
%\end{figure}

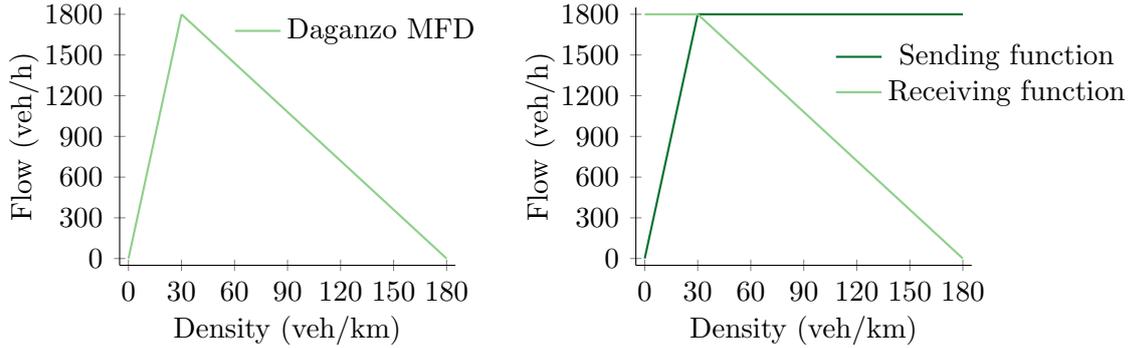
\begin{figure}
\begin{tabular}{cc}
\begin{tikzpicture}	
\begin{axis}[
    xlabel={Density (veh/km)},    ylabel={Flow (veh/h)},
    xmin=-5, xmax=185,    ymin=-50, ymax=1850,
    xtick={0,30,60,90,120,150,180},    ytick={0,300,600,900,1200,1500,1800},
    xticklabel style={/pgf/number format/.cd,fixed,set thousands separator={}},
    yticklabel style={/pgf/number format/.cd,fixed,set thousands separator={}},
	scaled x ticks=false,
	scaled y ticks=false,
    xtick pos=left,    ytick pos=left,    axis lines=left,
    width = 6cm,    height = 5cm,
    no markers,    x axis line style=-,    y axis line style=-,
    y label style={at={(axis description cs:0.07,.5)},anchor=south},
    font = \small,
    legend style={at={(1.1,1)},anchor=north east, draw=none}
]
\addplot+[thick,solid,mark=none,color32] coordinates {(0,0) (30,1800)};
\addplot+[thick,solid,mark=none,color32] coordinates {(180,0) (30,1800)};
\legend{Daganzo MFD}
\end{axis}
\end{tikzpicture} &
\begin{tikzpicture}	
\begin{axis}[
    xlabel={Density (veh/km)},    ylabel={Flow (veh/h)},
    xmin=-5, xmax=185,    ymin=-50, ymax=1850,
    xtick={0,30,60,90,120,150,180},    ytick={0,300,600,900,1200,1500,1800},
    xticklabel style={/pgf/number format/.cd,fixed,set thousands separator={}},
    yticklabel style={/pgf/number format/.cd,fixed,set thousands separator={}},
	scaled x ticks=false,
	scaled y ticks=false,
    xtick pos=left,    ytick pos=left,    axis lines=left,
    width = 6cm,    height = 5cm,
    no markers,    x axis line style=-,    y axis line style=-,
    y label style={at={(axis description cs:0.07,.5)},anchor=south},
    font = \small,
    legend style={at={(1.5,0.9)},anchor=north east, draw=none}
]
\addplot+[thick,solid,mark=none,color35] coordinates {(0,0) (30,1800)};
\addplot+[thick,solid,mark=none,color32] coordinates {(0,1800) (30,1800)};
\addplot+[thick,solid,mark=none,color35] coordinates {(180,1800) (30,1800)};
\addplot+[thick,solid,mark=none,color32] coordinates {(180,0) (30,1800)};
\legend{Sending function, Receiving function}
\end{axis}
\end{tikzpicture}
\end{tabular}
\caption{Daganzo MFD (left) and corresponding sending and receiving function, determining the associated discrete flux-function (right). }
\label{Fig: Daganzo MFD}
\end{figure}

}
\end{example}

\begin{example}[Chanut and Buisson MFD]\label{Example: CB}{\em 
This MFD was proposed in \cite{cb2003} for a multi-lane, two-class traffic flow model, and is a generalization of Smulders' fundamental diagram \cite{smul}. In the model, {vehicle} classes are differentiated by velocities and lengths, and therefore they influence the overall density and flow differently. We first present the MFD for the kinematic wave model, and present the discrete flux-function {afterward}.

For simplicity we refer to the first class of vehicles as \textit{cars}, and to the second class of vehicles as \textit{trucks}. In the model, the jamming density is given by
\[
	\rho^{{\rm jam}}(\rho_1,\rho_2) : = \frac{N}{\rho_1 L_1 + \rho_2 L_2} (\rho_1 + \rho_2),
\]
where $\rho_1,\rho_2$ are the car and truck densities, $L_1$ and $L_2$ are the respective vehicle lengths, and $N$ is the number of lanes. {The critical density, which distinguishes the free-flow regime and the congestion regime, and which is parameterized by $\beta \in [0.2,0.5]$, is given by}
\[
	\rho^{\textrm{c}}(\rho_1,\rho_2) := \beta \rho^{{\rm jam}}(\rho_1,\rho_2).
\]

In the free-flow regime, defined as $\rho_1 + \rho_2 \leqslant\rho^{\textrm{c}}(\rho_1,\rho_2)$, the vehicle velocity starts at the maximum velocity $v^f_i$ when $\rho_1 + \rho_2 = 0$, and decreases linearly in $\rho := \rho_1 + e \rho_2$, with $e := L_2/L_1$, until the critical velocity $v^{\textrm{c}}$ is reached (at the critical density $\rho^{\rm c}$). In congestion, both vehicle classes have the same velocity, and it is assumed that the total flow decreases linearly in $\rho$. The quantity $\rho$ can be interpreted as the density in terms of passenger-car equivalents (PCEs); one truck is equivalent to $e$ cars. In the congested regime, the flow of vehicles, measured in PCEs, can thus be expressed as
\[
	q_{\textrm{pce}}(\rho_1,\rho_2) := C\frac{\rho^{{\rm jam}}(\rho_1,\rho_2)  - (\rho_1 + \rho_2)}{\rho^{{\rm jam}}(\rho_1,\rho_2) - \rho^{\textrm{c}}(\rho_1,\rho_2) },
\]
where $C := v^\textrm{c} \rho^{\textrm{c}}(\rho_1,0)$. If $\rho_1 + \rho_2 \leq \rho^{\textrm{c}}(\rho_1,\rho_2)$ (in free-flow, that is), then the flow of both vehicle classes is now given by
\begin{equation*}
	\begin{split}
		& q_1 = \rho_1 v_1, \quad v_1 = v^f_1 - (v^f_1 - v^\textrm{c})\frac{\rho_1 + \rho_2}{\rho^{\textrm{c}}(\rho_1,\rho_2)}; \\
		& q_2 = \rho_2 v_2, \quad v_2 = v^f_2 - (v^f_2 - v^\textrm{c})\frac{\rho_1 + \rho_2}{\rho^{\textrm{c}}(\rho_1,\rho_2)},
	\end{split},
\end{equation*} 
whereas if  $\rho_1 + \rho_2 > \rho^{\textrm{c}}(\rho_1,\rho_2)$ (in congestion, that is), then
\begin{equation*}
	q_1 = \rho_1 v,\:\:\:\: q_2 = \rho_2 v,\:\:\:\: v = \frac{q_{\textrm{pce}}(\rho_1,\rho_2)}{\rho_1 + e \rho_2}.
\end{equation*}

To use this MFD in a discrete model, one has to determine the associated discrete flux-function, by solving the Riemann problem for every possible combination of two neighboring cell densities. This discrete flux-function is given in \cite{cb2003}, and can be written in terms of the sent density and the received density, as in Example~\ref{Example: Daganzo}, except for the case where the upstream cell is in the free-flow regime, the downstream cell is in the congested regime and the shock discontinuity propagates at a negative speed. 

We conclude this example by summarizing the discrete flux-function. Let $\Delta$ and $\Omega$ denote the sent and received density, respectively. If a cell is in the free-flow regime, then they are given by
\[
\begin{aligned}
	 \Delta = \Delta_1 + e \Delta_2,\quad
 \Omega = C,\quad 
 \Delta_i = \rho_i v_i, \quad i=1,2,
\end{aligned}
\]
whereas if a cell is in the congested regime, then
\[
\begin{aligned}
	 \Delta = \Delta_1 + e \Delta_2 C,\quad 
	 \Omega = (\rho_1 + e \rho_2)v, \quad
	 \Delta_i = \frac{\rho_i}{\rho_1 + e \rho_2} C, \quad i=1,2.
\end{aligned}
\]
Now, if for two neighboring cells, the upstream cell is in free-flow and the downstream cell is in congestion, then one has to consider the speed $s$ of the shock discontinuity, which is given by
\[
	s = \frac{(q_1^{\rm u} + e q_2^{\rm u}) - (q_1^{\rm d} + e q_2^{\rm d})}{(\rho_1^{\rm u} + e \rho_2^{\rm u}) - (\rho_1^{\rm d} + e \rho_2^{\rm d})},
\]
where the upper indexes `u' and `d' denote whether the quantities $q_i$ and $\rho_i$ belong to the upstream or downstream cell. If $s < 0$, then the sending-receiving concept does not apply, as the flow between the two cells is given by an intermediate state of densities, according to the solution of the Riemann problem; then we have
\[
\begin{aligned}
	& q_1 = v \frac{q_1^{\rm u} - s\rho_1^{\rm u}}{v - s}; 
	& q_2 = v \frac{q_2^{\rm u} - s\rho_2^{\rm u}}{v - s},
\end{aligned}
\]
where $v$ is the velocity of vehicles in the downstream cell.

In any other configuration of two neighboring states, {e.g.,} if either the upstream cell and downstream cell are both in the free-flow regime or if $s > 0$, then the flow is given by the following sending-receiving scheme: if $\Delta^{\rm u} \leq \Omega^{\rm d}$, then we have
\[
\begin{aligned}
	& q_1 = \Delta_1^{\rm u};
	& q_2 = \Delta_2^{\rm u},
\end{aligned}
\]
whereas if $\Delta^{\rm u} >\Omega^{\rm d}$, then
\[
\begin{aligned}
	& q_1 = \frac{\rho_1^{\rm u}}{\rho_1^{\rm u} + e \rho_2^{\rm u}} \Omega^{\rm d}; 
	& q_2 = \frac{\rho_2^{\rm u}}{\rho_1^{\rm u} + e \rho_2^{\rm u}} \Omega^{\rm d}.
\end{aligned}
\]}
\end{example}

\section{Scaling Limits}\label{SL}
\label{sec: scaling limits}

Now that we have defined our model, we proceed by analyzing it. As exact analysis is infeasible, we do so under a certain scaling. Concretely, we show that after an appropriate centering and normalization, the $md$-dimensional process $\rho(\cdot)$ converges to a Gaussian process as the scaling parameter $n$ goes to $\infty.$ In this way, we can find an approximation for the distribution of $\left(\rho(t_1),\dots,\rho(t_k)\right)$ at times $0 \leq t_1 < \ldots < t_k < \infty$ (corresponding to a $mdk$-dimensional normally distributed random vector). 
The  scaling we consider concerns both the lengths of the cells and time, i.e., we work with $\ell_i \mapsto n \ell_i$ for $i\in\{1,\ldots,d\}$, and $t \mapsto nt$. We 
thus obtain a sequence of processes $\{\rho^n(t)\}_n$. 

A first main result of this section is that $\{\rho^n(\cdot)\}_n$ converges almost surely, as $n \to \infty$, to a limiting process $\bar{\rho}(\cdot)$. This convergence is usually referred to as a \textit{first order approximation}, or a \textit{fluid limit}, and can be seen as a law of large numbers at the path level. This fluid limit is consistent with the CTM associated to the function we choose in \autoref{Assumption: MFD}, as its dynamics coincide with~\eqref{Eqn: Godunov eqn}. 
As a second main result of this section, we  prove that the sequence of processes $\hat{\rho}^n(\cdot) := \sqrt{n}(\rho^n(\cdot) - \bar{\rho}(\cdot))$ converges in distribution, as $n \to \infty$, to a Gaussian process, which is a \textit{second order approximation} or a \textit{diffusion limit}. This result is essentially a central limit theorem at the path level.
%These results allows us to approximate, not only the mean of our process, but also the variance. In turn, this allows for inference on traffic events and for control thereof.

The scaling that we impose can intuitively be understood as follows. For our results we want to exploit principles underlying the classical law of large numbers and central limit theorem, analogously to when one considers a large sample of (approximately independent and identically distributed) random variables. Hence, we need a large number of vehicles to be present in each of the cells. To achieve this, we scale the length of the cells by a factor of $n$, where we let $n$ grow large. However, the discrete flux-functions defined in \eqref{Eqn: general MFD}-\eqref{Eqn: general MFD_d} have a compact co-domain. This means that, to keep the flow across cells invariant under the scaling of the cell lengths, %and thus obtain a valid approximation, 
we need to scale the number of vehicles that jump per unit time, or equivalently, scale time. Put differently, without scaling these fluxes, we have the undesired effect that the processes 
\[
	\frac{1}{n \ell_i} Y_{i,j}\left( \int_0^t q_{i,j}(\rho(s)) \diff s \right)
\]
converge almost surely to the zero process as $n \to \infty$. We observe that, in order to get a non-degenerate limiting process, leaving the flux over finite time intervals invariant, we should scale time by a factor $n$.

In this section, we first state and prove the fluid limit, and then move to the diffusion limit. In the sequel we denote by $\rho^n(t)$ the process $\rho(nt)$, where the lengths of cells are $n \ell_i$, for $i\in\{1,\ldots,d\}$. As a closing comment, all results in this section can be translated into results for $X(\cdot)$, due to \eqref{Eqn: density <-> number of cars}.

\subsection{Fluid limit}

In order to prove both the first and second order approximations for $\rho(t)$ (and hence for $X(t)$), we use two theorems from \cite[Ch.\ 8]{kurtz1981approximation}. These two theorems are written in terms of population processes, but directly translate to the setting we consider. To illustrate this, consider the scaled processes $\rho^n(t)$. By \eqref{Eqn: drho} we have 
\begin{align*}
	\rho^n_{ij}(t) = & \; \rho^n_{ij}(0) + \frac{1}{n \ell_i} Y_{i-1,j}\left( \int_0^{nt} q_{i-1,j}\left( \frac{\rho(s)}{n} \right) \diff s \right)  - \frac{1}{n \ell_i} Y_{i,j}\left( \int_0^{nt} q_{i,j}\left( \frac{\rho(s)}{n} \right) \diff s \right) \\
		= & \; \rho^n_{ij}(0) + \frac{1}{n \ell_i} Y_{i-1,j}\left( n \int_0^{t} q_{i-1,j}\left(  \rho^n(s) \right) \diff s \right)  - \frac{1}{n \ell_i} Y_{i,j}\left( n \int_0^{t} q_{i,j}\left( \rho^n(s) \right) \diff s \right),
\end{align*}
where the second equality is due to a change variables (with $s$ being replaced by $ns$), and the second line agrees with \cite[Eqn. (8.6)]{kurtz1981approximation}. Hence, we only need to verify the assumptions of \cite[Thm.\ 8.1]{kurtz1981approximation} so as to establish the fluid limit. We state the theorem, and verify these assumptions in the proof. Here, `u.o.c.' means `uniformly on compacts'. 

\begin{theorem}[Fluid limit]\label{Thm: Fluid limit cells}
	Under \autoref{Assumption: MFD} and $\lim_{n \to \infty} \rho^n(0) = \bar{\rho}(0)$ for some $\bar{\rho}(0)$, we have
\[
	\rho^n(\cdot) \overset{\rm a.s.}{\to} \bar{\rho}(\cdot), \quad u.o.c.,
\]
where, for $j\in\{1,\ldots,m\}$ and $i\in\{1,\ldots,d\}$, the process $\bar{\rho}(\cdot)$ satisfies the integral equation
\begin{equation}\label{Eqn: fluid limit from, for (ij)}
\bar{\rho}_{ij}(t) = \bar{\rho}_{ij}(0) + \frac{1}{\ell_i} \int_0^t q_{i-1,j}(\bar{\rho}(s)) \diff s - \frac{1}{\ell_i}  \int_0^t q_{i,j}(\bar{\rho}(s)) \diff s.
\end{equation}
\end{theorem}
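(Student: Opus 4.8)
The plan is to invoke \cite[Thm.\ 8.1]{kurtz1981approximation}, the functional law of large numbers for density-dependent (spatial) population processes, and to verify that our scaled process $\rho^n(\cdot)$, as rewritten in the display immediately preceding the theorem statement, fits the framework of \cite[Eqn.\ (8.6)]{kurtz1981approximation} and satisfies the hypotheses of that theorem. Concretely, the state space of the (unscaled) process is the finite set $\bigtimes_{i,j}\{0,1/\ell_i,\ldots,\rho^{\rm jam}_{ij}/\ell_i\}$, the possible jumps are the $2dm$ vectors $\pm e_{ij}/\ell_i$ corresponding to a type-$j$ vehicle crossing the boundary of cell $i$, and the associated rate functions are the $q_{i,j}(\cdot)$ built from \autoref{Assumption: MFD} together with the boundary rates $q_{0,j}$ and $q_{d,j}$. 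After the change of variables $s\mapsto ns$ the jump rate along direction $+e_{ij}/\ell_i$ at state $\rho^n$ is $n\,q_{i-1,j}(\rho^n)$ and along $-e_{ij}/\ell_i$ it is $n\,q_{i,j}(\rho^n)$, which is exactly the density-dependent scaling required.

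The key steps, in order, are: (i) identify the drift (or ``fluid vector field'') $F(\rho):=\sum_{\text{jumps }\zeta}\zeta\,\beta_\zeta(\rho)$, which here reads $F_{ij}(\rho)=\ell_i^{-1}\big(q_{i-1,j}(\rho)-q_{i,j}(\rho)\big)$, so that the candidate limit ODE is precisely the integrated form \eqref{Eqn: fluid limit from, for (ij)}; (ii) verify the Lipschitz condition on $F$ required by \cite[Thm.\ 8.1]{kurtz1981approximation}: each $q_{i,j}$ is Lipschitz by \autoref{Assumption: MFD} (with constants $K_0,\ldots,K_d$), the minima in the definitions of $q_{0,j}$ and $q_{d,j}$ preserve Lipschitz continuity, and a finite sum of Lipschitz functions divided by the constant $\ell_i$ is Lipschitz, so $F$ is globally Lipschitz on $(\R^+)^{dm}$; (iii) check the mild integrability/growth hypothesis of the theorem — the rate functions $q_{i,j}$ are bounded (their co-domains are the compact boxes $\bigtimes_j[0,q^{\max}_{ij}]$, and $q_{0,j}\le\lambda_j$, $q_{d,j}\le\nu_j$), which trivially gives $\sum_\zeta|\zeta|\sup_\rho\beta_\zeta(\rho)<\infty$ and makes the usual condition on $\int_0^t$ of the rates automatic; (iv) note that the limit ODE $\bar\rho'(t)=F(\bar\rho(t))$ with initial condition $\bar\rho(0)$ has a unique global solution, by Picard–Lindel\"of applied to the globally Lipschitz $F$ (and the solution stays in the physical domain because $F_{ij}$ points inward on the faces $\rho_{ij}=0$ and $\rho_{ij}=\rho^{\rm jam}_{ij}$, thanks to the last clause of \autoref{Assumption: MFD}); (v) invoke the hypothesis $\rho^n(0)\to\bar\rho(0)$ and apply \cite[Thm.\ 8.1]{kurtz1981approximation} to conclude $\rho^n(\cdot)\to\bar\rho(\cdot)$ almost surely, uniformly on compact time intervals.

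I do not expect any genuinely hard analytic obstacle: the model was deliberately engineered (finitely many bounded, Lipschitz transition rates) so that Kurtz's theorem applies essentially off the shelf. The only points requiring a little care are bookkeeping ones: confirming that the rewritten equation genuinely matches the normalization convention of \cite[Eqn.\ (8.6)]{kurtz1981approximation} (in particular that the $\ell_i$-dependent jump sizes are handled correctly — Kurtz's standard formulation uses jumps $\zeta/n$, so one must observe that scaling cell lengths by $n$ is exactly what produces jumps of size $1/(n\ell_i)$), and checking that the boundary rate functions $q_{0,j}$ and $q_{d,j}$, despite the $\sup_x$ and $\min$ appearing in their definitions, are indeed Lipschitz as functions of $\rho$ — this follows because $\sup_{x}\tilde q_{0,j}(x,(\rho_{1k})_k)$ is a supremum of $K_0$-Lipschitz functions of $(\rho_{1k})_k$ and hence $K_0$-Lipschitz, and $\min\{\lambda_j,\cdot\}$ is $1$-Lipschitz. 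Once these identifications are made, the conclusion and the explicit form \eqref{Eqn: fluid limit from, for (ij)} of the limit follow immediately; uniqueness of the solution to \eqref{Eqn: fluid limit from, for (ij)} can be recorded as part of the statement since $F$ is Lipschitz.
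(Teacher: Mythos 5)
Your proposal is correct and follows essentially the same route as the paper: both verify the two hypotheses of \cite[Thm.\ 8.1]{kurtz1981approximation} --- boundedness of the finitely many transition rates (which follows from the compact co-domains in \autoref{Assumption: MFD}) and the Lipschitz property of the drift $F$ inherited from the Lipschitz continuity of the $\tilde q_i$ --- and then apply the theorem off the shelf. The extra details you supply (the Lipschitz check for the boundary rates through the $\sup$ and $\min$, and uniqueness of the limiting ODE) are consistent with, though more explicit than, the paper's argument.
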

\begin{proof}
As said before, we want to appeal to \cite[Thm.\ 8.1]{kurtz1981approximation}. Therefore, we verify the assumptions of this theorem. Because, by \autoref{Assumption: MFD}, the functions $\tilde q_i$ are Lipschitz on a compact domain, $\sup_{x} q_{ij}(x)$ exists, for every $i \in \{0,\ldots,d\}$ and $j \in \{1,\ldots,m\}$. Hence, by the fact that there is a finite number of possible transitions, the first assumption \cite[Eqn.~(8.10)]{kurtz1981approximation} directly follows:
\[
	\sum_{i=0}^d \sum_{j=1}^m \sup_{x \in \R^{dm}_+}q_{ij}(x) \leq dm \max_{\substack{i \in \{0,\ldots,d\} \\ j\in \{1,\ldots,m\}}} \sup_{x \in \R^{dm}_+}q_{ij}(x) < \infty
\] 
The other assumption, {i.e.,} \cite[Eqn. (8.11)]{kurtz1981approximation}, is written in our setting as
\[
	\bigg|\! \bigg| \sum_{i=1}^d \sum_{j=1}^m \frac{1}{\ell_i} \left[ \left(q_{i-1,j}(\rho_1) - q_{ij}(\rho_1)\right) - \left(q_{i-1,j}(\rho_2) - q_{ij}(\rho_2)\right)  \right] e_{ij} \bigg|\! \bigg| \leq K |\!|{\rho_1 - \rho_2}|\!|,
\]
for all $\rho_1, \rho_2$ in the state space of $\rho(\cdot)$. This property is an immediate consequence of the Lipschitz assumption that we imposed {on $\tilde q_i$}.
\end{proof}

\begin{remark}{\em 
We explain how the integral equation \eqref{Eqn: fluid limit from, for (ij)} is consistent with the Godunov-based approximate solution of \eqref{Eqn: system conservation laws}. Consider the time interval $[0,t]$, and take a partition $\pi$, such that $0 = t_0 < t_1 < \ldots < t_k = t$, and so that the mesh $\norm{\pi} := \sup_{l\in\{1,\ldots, k\}} |t_l - t_{l-1}| \leq \Delta t$, where $\Delta t$ follows from the so-called CFL condition (cf.\ \cite[Eqn.\ (13.11)]{leveque1992}) that makes the Godunov scheme stable. Using \eqref{Eqn: Godunov eqn} one can write
\begin{align*}
\rho^G_{ij}(t) & = \rho^G_{ij}(t_k-1) + \frac{t_k - t_{k-1}}{\ell_i} \left[ q_{i-1,j}(\rho(t_{k-1})) - q_{i,j}(\rho(t_{k-1})) \right] \\
& = \rho^G_{ij}(0) + \frac{1}{\ell_i} \sum_{l=1}^k \left(t_l - t_{l-1}\right) \left[ q_{i-1,j}(\rho(t_{l-1})) - q_{i,j}(\rho(t_{l-1})) \right],
\end{align*}
where $\rho^G_{ij}(t)$ is now the {type-$j$} density in cell $i$, given by the solution of the Godunov method, as emphasized by the superscript $G$.
Now, we interpret $\rho_{ij}^G(t)$ as a continuous function, by linearly interpolating between the values at the $t_l$, $l=1,\ldots,k$. Letting $k \to \infty$ such that $\norm{\pi} \to 0$, or instead letting $\Delta t \to 0$, we have that the above expression for $\rho_{ij}(t)$ converges to the expression for $\bar{\rho}_{ij}(t)$ in \eqref{Eqn: fluid limit from, for (ij)}, uniformly on compact time intervals.}
\end{remark}

To conclude, the results of this section show that the fluid limit is consistent with kinematic wave models. In Section \ref{sec: generating diff approx} we show that $\rhob(\cdot)$ is consistent with the mean dynamics of our model. The added value of our stochastic model, relative to existing deterministic models, lies in the fact that we also have a handle on the likelihood of fluctuations around the fluid limit,  using the diffusion limit that we establish in Section \ref{GDL}.

\subsection{Gaussian diffusion limit}\label{GDL}

We now turn to the diffusion limit by considering the scaled and centered process $\hat{\rho}^n$, defined as
\[
	\rhoh^n(t) := \sqrt{n} \left( \rho^n(t) - \rhob(t) \right).
\]
When establishing the diffusion limit, it is practical to work with a vector representation of the processes involved, for which we introduce the notation
\[
	F(\rho(t)) := \sum_{i=1}^{d} \frac{1}{\ell_i} \sum_{j=1}^m \left( q_{i-1,j}(\rho(t)) - q_{i,j}(\rho(t))  \right)e_{ij},
\]
where $e_{ij} \in \Z^{dm}$ is the vector that has a one on the $(i,j)$-th coordinate, and zeros everywhere else. This immediately gives us the following vector representation for $\rhob(\cdot)$:
\begin{equation}\label{Eqn: vector integral eqn, rho_bar}
	\rhob(t) = \rhob(0) + \int_0^t F(\rhob(s)) \diff s.
\end{equation}

To formulate a weak limit for $\rhoh^n(t)$, we want to use \cite[Thm.\ 8.2]{kurtz1981approximation}. This theorem, however, requires the existence of $\partial F(x)$, the matrix of partial derivatives of $F$, as a continuous and bounded operator, for which the Lipschitz assumption that we imposed {on $\tilde q_i$} is not enough. In the proof of \cite[Thm.\ 8.2]{kurtz1981approximation}, the existence of $\partial F$ is used for two statements. The first of these statements is that the matrix-valued ordinary differential equation
\begin{equation}\label{Eqn: matrix ODE dF}
	\frac{\partial}{\partial t}\Phi(t,s) = \partial F(\rhob(t)) \,\Phi(t,s), \quad \Phi(s,s) = I,
\end{equation}
has a unique solution. The second statement is that the mapping $P \colon D_{\R^{\rm d}}[0,\infty) \to D_{\R^{\rm d}}[0,\infty)$, given by
\[
	P \xi(t) := \xi(t) + \int_0^t \Phi(t,s) \,\partial F(\bar{X}(s)) \,\xi(s) \diff s,
\]
is continuous in the Skorohod topology. With the use of standard arguments from differential equations, the result from \cite[Thm.\ 8.2]{kurtz1981approximation} remains true under the weaker condition that $\partial F$ exists in the weak sense, and is bounded, which follows from the Lipschitz property that we imposed {on $\tilde q_i$}. For completeness, we summarize these arguments.
%\footnote{\textcolor{red}{\tt \tiny JS: hier kunnen we ook zeggen: `maar wel voldoende zodat $\partial F$ bestaat in zwakke zin, met zijn componenten in $L^\infty$, wat voldoende is voor de stelling'. Dan skippen we alle details in de volgende twee paragrafen, waarvan ik me afvraag of ze belangrijk zijn voor dit paper. Ik heb het nu laten staan, want we kunnen de reviewer hier ook over laten beslissen. Overigens vond ik het moeilijk om het veel korter te maken. Wel belangrijk is dat we hier min of meer $\partial F$ definieren zoals we die verderop in de numerieke experimenten gebruiken. Nu kan je elke zwakke afgeleide nemen, maar het is allicht relevnat om te vertellen. Zonder deze footnote staat thm 4.3 in zijn geheel op p.12}} 

Since the $\tilde q_i$ are Lipschitz, $F(\cdot)$ is Lipschitz as well, so  that $\partial F$ exists in the weak sense and its component functions are in $L^\infty$ \cite[Thm.\ 5.8.4 and its remark]{evans10}. Obviously, every weak derivative of $F(\cdot)$ equals the `strong' derivative of $F(\cdot)$ at every point where $F(\cdot)$ is differentiable. Moreover, as $F(\cdot)$ is Lipschitz, the integral equation \eqref{Eqn: vector integral eqn, rho_bar} has a unique strong solution, so that we can consider $t\mapsto \partial F(\rhob(t))$ as a (deterministic) function of $t$.

Combining both observations, we can (and will) from now on let  $\partial F$ be the matrix that is given by the partial derivatives of $F$, whenever they exist; whenever the corresponding partial derivative does not exist,
we use $K = \max_{k \in\{1,\ldots,  (d+1)m\}} \kappa_k$ instead, with $\kappa_{k}$ denoting the Lipschitz constant corresponding to the $k$-th coordinate of $F(\cdot)$. That is, $\partial F$ is the matrix of the partial derivatives of $F$ in the weak sense, and we emphasize that \eqref{Eqn: matrix ODE dF} should be interpreted accordingly from now on. We then have that the operator $t\mapsto\partial F (\rhob(t))$ is a deterministic, bounded, almost everywhere continuous function. Therefore we can apply Carath\'eodory's theorem \cite[Thm.\ 5.1]{hale1969} to find that there exists a solution to \eqref{Eqn: matrix ODE dF}, which is unique due to  \cite[Thm.\ 5.3]{hale1969}. Furthermore, as $\partial F$ is bounded, the continuity of the function $P$ still holds. (It is noted that  simply \textit{assuming} the condition of \cite[Thm.\ 8.2]{kurtz1981approximation} {will} not lead to a satisfactory solution, as various frequently used MFDs from the literature do not obey this property.)

We are now ready to state our version of \cite[Thm.\ 8.2]{kurtz1981approximation}, the validity of which follows from the above considerations. We let $Z^n(\cdot) \Rightarrow Z(\cdot)$ denote weak convergence of processes $Z^n(\cdot)$ to a process $Z(\cdot)$, in the space of c\`adl\`ag functions with the Skorohod topology.

\begin{theorem}[Diffusion approximation]\label{Thm: diffusion cells}
Under \autoref{Assumption: MFD} and  $\lim_{n \to \infty} \sqrt{n}\,|\rho^n(0) - \rho_0| = 0$ for some $\rho_0$, we have that $\rhoh^n(\cdot) \Rightarrow \rhoh(\cdot)$, where $\rhoh_{ij}(t)$, $i\in\{1,\ldots,d\}$, $j\in\{1,\ldots,m\}$, obeys the stochastic integral equation
\begin{align}\nonumber
	\rhoh_{ij}(t) = & \; \frac{1}{\ell_i} \int_0^t \nabla q_{i-1,j}(\rhob(s)) \cdot \rhoh(s) \diff s - \frac{1}{\ell_i} \int_0^t \nabla q_{i,j}(\rhob(s)) \cdot \rhoh(s) \diff s \\
& 	\frac{1}{\ell_i} \int_0^t \sqrt{q_{i-1,j}(\rhob(s))} \diff B_{i-1,j}(s) - \frac{1}{\ell_i} \int_0^t \sqrt{q_{i,j}(\rhob(s))} \diff B_{i,j}(s) ,\label{lsde}
\end{align}
where $\nabla q_{i-1,j}$ and $\nabla q_{i,j}$ are gradients that are to be interpreted in the weak sense, corresponding with our redefinition of $\partial F$, where $x \cdot y$ denotes the Euclidean inner product for vectors $x$ and $y$, and where the $B_{i,j}(\cdot)$ are independent standard Brownian motions.
\end{theorem}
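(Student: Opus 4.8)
The plan is to realize \autoref{Thm: diffusion cells} as an instance of the diffusion-approximation theorem \cite[Thm.\ 8.2]{kurtz1981approximation}, read in the modified form justified in the discussion preceding the statement, where $\partial F$ denotes the matrix of weak (equivalently: pointwise, where defined) partial derivatives of $F$, supplemented by the Lipschitz constant $K$ on the exceptional set. Most of the structural input is already in place: the $n$-scaled density-dependent representation of $\rho^n$ matches \cite[Eqn.\ (8.6)]{kurtz1981approximation}; the rates $q_{ij}$ are uniformly bounded and $F$ is Lipschitz; the propagator $\Phi(t,s)$ of \eqref{Eqn: matrix ODE dF} exists and is unique by Carath\'eodory's theorem, since $t\mapsto\partial F(\rhob(t))$ is a bounded deterministic function of $t$ that is continuous almost everywhere; and the linear operator $P$ is continuous in the Skorohod topology. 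What remains is to write out the martingale decomposition of $\rhoh^n$, identify the limits of its three constituents, and conclude by a continuous-mapping argument.

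Concretely, let $\tilde Y_{ij}(u):=Y_{ij}(u)-u$ denote the compensated unit-rate Poisson processes. Under the $n$-scaling, \eqref{Eqn: drho} reads $\rho^n(t)=\rho^n(0)+\int_0^t F(\rho^n(s))\,ds+M^n(t)$, where the $(i,j)$-th coordinate of the martingale $M^n$ equals $\frac{1}{n\ell_i}\tilde Y_{i-1,j}(n\!\int_0^t q_{i-1,j}(\rho^n(s))\,ds)-\frac{1}{n\ell_i}\tilde Y_{i,j}(n\!\int_0^t q_{i,j}(\rho^n(s))\,ds)$. Subtracting \eqref{Eqn: vector integral eqn, rho_bar} and multiplying by $\sqrt n$ gives
\begin{equation*}
\rhoh^n(t)=\sqrt n\,(\rho^n(0)-\rhob(0))+\int_0^t\sqrt n\,\big(F(\rho^n(s))-F(\rhob(s))\big)\,ds+\sqrt n\,M^n(t).
\end{equation*}
The initial term vanishes by the hypothesis $\sqrt n\,|\rho^n(0)-\rho_0|\to0$, so the limit process starts at the origin. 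For the martingale term, the functional CLT for Poisson processes ($n^{-1/2}\tilde Y_{ij}(n\,\cdot)\Rightarrow B_{ij}(\cdot)$, with the $B_{ij}$ independent standard Brownian motions), the u.o.c.\ convergence of the (deterministic-in-the-limit) time changes $\int_0^{\cdot}q_{ij}(\rho^n(s))\,ds\to\int_0^{\cdot}q_{ij}(\rhob(s))\,ds$ afforded by \autoref{Thm: Fluid limit cells}, and the random-time-change theorem together yield the joint convergence $\sqrt n\,M^n_{ij}(\cdot)\Rightarrow\frac{1}{\ell_i}\int_0^{\cdot}\sqrt{q_{i-1,j}(\rhob(s))}\,dB_{i-1,j}(s)-\frac{1}{\ell_i}\int_0^{\cdot}\sqrt{q_{i,j}(\rhob(s))}\,dB_{i,j}(s)=:W_{ij}(\cdot)$. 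For the drift term, I would expand $F$ along the fluid path: writing $F(\rho^n(s))-F(\rhob(s))=\partial F(\rhob(s))(\rho^n(s)-\rhob(s))+r^n(s)$ gives $\sqrt n\,(F(\rho^n(s))-F(\rhob(s)))=\partial F(\rhob(s))\,\rhoh^n(s)+\sqrt n\,r^n(s)$.

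To close the argument, a Gronwall estimate that uses only the Lipschitz bound, $|\rhoh^n(t)|\le|\sqrt n(\rho^n(0)-\rhob(0))|+|\sqrt n M^n(t)|+K\int_0^t|\rhoh^n(s)|\,ds$, shows that $\{\sup_{s\le T}|\rhoh^n(s)|\}_n$ is stochastically bounded; feeding this a priori bound back, together with $\rho^n\to\rhob$, one verifies that $\int_0^{\cdot}\sqrt n\,r^n(s)\,ds\Rightarrow0$ uniformly on compacts. Collecting the initial term, the martingale $\sqrt n M^n$ and this remainder integral into a single process $b^n$, the decomposition becomes $\rhoh^n(t)=b^n(t)+\int_0^t\partial F(\rhob(s))\,\rhoh^n(s)\,ds$ with $b^n\Rightarrow W$; by the variation-of-constants formula $\rhoh^n=Pb^n$, so continuity of $P$ and the continuous mapping theorem give $\rhoh^n\Rightarrow PW=:\rhoh$, the unique solution of the linear stochastic integral equation \eqref{lsde}. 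Since $P$ is linear and $W$ Gaussian, $\rhoh$ is a Gaussian process, representable as $\rhoh(t)=\int_0^t\Phi(t,s)\,dW(s)$.

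The step I expect to be the main obstacle is the drift linearization, i.e.\ showing $\int_0^{\cdot}\sqrt n\,r^n(s)\,ds\Rightarrow0$, because $F$ is merely Lipschitz: the estimate $|r^n(s)|=o(|\rho^n(s)-\rhob(s)|)$ presupposes that $F$ is differentiable at $\rhob(s)$, and at times where the fluid trajectory sits on a kink of the MFD (for instance the capacity point of the Daganzo flux, where $\min\{S,R\}$ is not smooth) the one-sided directional derivatives disagree and even the form of the limiting drift is in question. I would handle this by invoking the additional structure that the MFDs of interest are piecewise $C^1$, with non-smoothness confined to finitely many $C^1$ hypersurfaces: since $\rhob$ solves $\dot x=F(x)$, it meets these hypersurfaces at isolated (hence Lebesgue-null) times except in non-generic sliding-mode situations, so along $\rhob$ the map $F$ is differentiable for almost every $t$; on the null set one bounds $|\sqrt n\,r^n(s)|\le2K|\rhoh^n(s)|$, which is stochastically bounded, and dominated convergence finishes the estimate. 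Making this "standard differential-equations argument" precise under the bare Lipschitz \autoref{Assumption: MFD}, rather than under piecewise smoothness, is the one genuinely delicate point; everything else is bookkeeping around \cite[Thm.\ 8.2]{kurtz1981approximation}.
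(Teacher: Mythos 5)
Your proposal is correct and, at bottom, travels the same road as the paper: the paper's ``proof'' of \autoref{Thm: diffusion cells} consists of invoking \cite[Thm.\ 8.2]{kurtz1981approximation} and arguing, in the discussion preceding the statement, that the only two places where the (there assumed continuous) derivative $\partial F$ enters Kurtz's argument --- existence/uniqueness of the propagator $\Phi$ solving \eqref{Eqn: matrix ODE dF}, and Skorohod-continuity of the map $P$ --- survive when $\partial F$ is merely a bounded weak derivative. What you do differently is re-derive Kurtz's theorem from scratch (martingale decomposition, FCLT for the compensated Poisson processes plus random time change, Gronwall bound, drift linearization, and continuous mapping through $P$), and in doing so you surface a \emph{third} place where differentiability is used, namely the estimate $\int_0^\cdot \sqrt{n}\,r^n(s)\,\mathrm{d}s \Rightarrow 0$ for the remainder in $F(\rho^n(s))-F(\rhob(s))=\partial F(\rhob(s))(\rho^n(s)-\rhob(s))+r^n(s)$. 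The paper's claim that $\partial F$ is needed ``for two statements'' understates this: under the bare Lipschitz hypothesis of \autoref{Assumption: MFD}, one only knows $F$ is differentiable Lebesgue-a.e.\ on its domain, and one must still argue that the fluid trajectory $\rhob(\cdot)$ spends Lebesgue-null time in the non-differentiability set before the pointwise-a.e.\ linearization plus dominated convergence (with the stochastic bound $|\sqrt{n}\,r^n(s)|\le 2K|\rhoh^n(s)|$) can be applied. Your fix --- piecewise $C^1$ flux functions with kinks confined to hypersurfaces that the fluid path crosses at isolated times --- is an honest additional hypothesis that the paper does not state but implicitly relies on; indeed, in the excluded ``sliding'' case (e.g.\ a stationary fluid point sitting on the kink of the Daganzo flux) the characterization of the limit genuinely changes, which is exactly the degradation phenomenon acknowledged in the remark closing \autoref{GDL}. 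In short: your argument buys an explicit, self-contained proof and an accurate accounting of where smoothness is really needed, at the price of a mild structural assumption; the paper buys brevity by delegating to \cite{kurtz1981approximation}, at the price of glossing over the linearization step.
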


The limiting process $\rhoh(\cdot)$ thus satisfies the
 \textit{linear stochastic integral equation} (\ref{lsde}). It is known that this class of stochastic integral equations allows an explicit solution, which  is a Gaussian process with a known mean and covariance \cite[Section 5.6]{karatzas2012brownian}. To  state these results, we first introduce some notation.

Let $Q(\rho(t))$ be a vector of length $(d+1)m$ with entries $q_{i-1,j}$, $i\in\{1,\ldots,d\}+1$ and $j\in\{1,\ldots,m\}$, ordered lexicographically, {i.e.,} $Q_{(i-1)m+j} = q_{i-1,j}$. Now, let $H$ be the $dm \times (d+1)m$ matrix with $H_{kl} := \mathbbm{1}_{\{k=l\}} - \mathbbm{1}_{\{k+m=l\}}$
%\begin{equation}\label{Def. H}
%	H_{k,l} := \left\{
%	         \begin{array}{rl}1& \mbox{if}\:k = l, \\
%		 -1 &  \mbox{if}\:k + m = l, \\
%		0  &  \textrm{else},\end{array}
%	\right.
%\end{equation}
for $k\in\{1,\ldots, dm\}$ and $l\in\{1,\ldots, (d+1)m\}$. In addition, $L$ is a $dm$-dimensional diagonal matrix,
with the $k$-th diagonal element being $1/\ell_i$ if $\lceil k/m \rceil = i$, for $k \in \{1,\ldots,dm\}$ and $i \in \{1,\ldots,d\}$.
%\[
%	 \ell = \left(
%	 \underbrace{\frac{1}{l_1},\ldots,\frac{1}{l_1}}_{\textrm{$m$ times}},
%	 \underbrace{\frac{1}{l_2},\ldots,\frac{1}{l_2}}_{\textrm{$m$ times}}, \ldots ,
%	 \underbrace{\frac{1}{l_d},\ldots,\frac{1}{l_d}}_{\textrm{$m$ times}} \right)^\top.
%\]
It is readily checked that with this new notation we have $F(\rho(t)) = LH\,Q(\rho(t))$. %\footnote{\textcolor{red}{\tt \tiny JS: Ik heb deze definities compacter opgeschreven omdat ik de display equations lelijk vond, maar is dit beter (en leesbaar)? Volgens mij is dit correct zo, toch? (oude display equation zijn gecomment)}}
The process $\rhoh(\cdot)$ thus satisfies the (vector-valued) stochastic differential equation
\[
	\diff \rhoh(t) = \partial F(\rhob(t)) \rhoh(t) \diff t + L H\, \Sigma(\rhob(t)) \diff B(t),
\]
where $B(t)$ is a length $(d+1)m$ vector of independent standard Brownian motions and where $\Sigma(\rhob(t)) $ is the $(d+1)m \times (d+1)m$ diagonal matrix with  the square roots of ${Q(\rhob(t))}$ on the diagonal. %Moreover, note that we have $\partial F (\rhob(t)) = L H \partial Q(\rhob(t))$ is a $dm \times dm$ matrix, where $\partial Q(\rhob(t))$ is the jacobian of $Q$, at every point where $Q$ is differentiable, and has a $K$ at each component where the respective derivative does not exist.
The corresponding  {mean vector} and {covariance matrix}  are defined as
\begin{equation*}
	 M(t) := \E[ \rhoh(t) ],\:\:\:\:
		 \Gamma(s,t):=\textrm{cov}\left(\rhoh(s),\rhoh(t)\right) = \E[(\rhoh(s) - M(s))(\rhoh(t)-M(t))^\top].		 
\end{equation*}
In addition, $V(t) := {\rm var}[\rhoh(t)]=\Gamma(t,t).$
As in \cite[Section 5.6, Problems 6.1, 6.2]{karatzas2012brownian}, with $\bar\Phi(s):=\Phi(s,0)$, these allow explicit expressions:
\begin{align}
		 M(t) &= \bar\Phi(t) \left[M(0) + \int_0^t \bar\Phi^{-1}(s) \diff s \right], \nonumber \\
		 \Gamma(t,s) &= \bar\Phi(s) \left[ V(0) + \int_0^{t \wedge s} \bar\Phi^{-1}(u) LH\,\Sigma(\rhob(u))  \left(\bar\Phi^{-1}(u) \,LH\,\Sigma(\rhob(u))\right)^\top \diff u \right] \bar\Phi^\top(t) \label{eqn: Cov rho_t, rho_s}.
\end{align}
Moreover, $M(t)$ and $V(t)$ solve the linear (matrix) differential equations%\footnote{\textcolor{red}{\tt \tiny MM: klopt het dan dat de a(t) uit K\&S gelijk is aan 0? Ik snap dit niet helemaal. JS: ja dat klopt, dat reduceert de vergelijkingen naar deze, of mis ik iets?}}
\begin{align}
	 \frac{{\rm d}M(t)}{{\rm d}t}& = \partial F(\rhob(t)) \,M(t),  \nonumber \\
		 \frac{{\rm d}V(t)}{{\rm d}t} &=  \partial F(\rhob(t)) V(t) + V(t) (\partial F(\rhob(t)))^\top + LH\,\Sigma(\rhob(t)) (LH\,\Sigma(\rhob(t)))^\top .  \label{eqn: dVar rho}
\end{align}

At the beginning of this section, we expressed that our objective was to find an approximation for the distribution of $\left( \rho({t_1}),\ldots,\rho({t_k}) \right)$ for  $0 \leq t_1 < t_2 < \ldots < t_k < \infty$. With the fluid limit $\rhob(\cdot)$ and diffusion limit $\rhoh(\cdot)$ we have succeeded in doing so. In Section \ref{NE} we present numerical examples that assess the accuracy of the resulting approximation.
%\footnote{\textcolor{red}{\tt JS: hier geen remark over de rol van $l_i$ in de nauwkeurigheid van de benadering en orde van de variantie?}} \footnote{\textcolor{red}{\tt JS: dan nog een algemene vraag: willen we een opmerking maken dat de limieten voor $\rho(\cdot)$ direct de limieten voor $X(\cdot)$ geven, vanwege vermenigvuldiging met $L$?}}

%\begin{remark}\label{Remark: variantie als orde l_i}
%\textcolor{red}{Remark over dat de variantie is van $\mathcal{O}(l_i)$?}
%\end{remark}

\begin{remark}{\em 
It is known that the accuracy of the diffusion approximation degrades in the vicinity of non-smooth points of the transition rate functions. The reason is that in such a setting the natural fluctuations of the process are such that it resides a non-negligible fraction on time on both sides of the non-smooth point, such that the local dynamics on both sides  matter. 
For more background on this phenomenon we refer to \cite{A1,A2}.}
\end{remark}

\section{Travel Times}\label{TT}
\label{sec: Travel Times}

In this {section,} we turn our focus to the analysis of the travel-time distribution of {type-$j$} vehicles. For $i\in\{1,\ldots,d\}$ and $k \in \{0,\ldots,d-i+1\}$, we define the {type-$j$} travel time, $T_{i,i+k,j}(t)$, to be the time that it takes a {type-$j$} vehicle to depart from cell $i+k$, {given that it resides in cell $i$ at time $t$}. If, for every $j \in \{1,\ldots,m\}$, {type-$j$} vehicles do not overtake other {type-$j$} vehicles within a cell, we have {(cf.\ \cite[Eqn.\ (40)]{Qian2017})}
\begin{equation}\label{Eqn: relation Y and T}
	\{T_{i,i+k,j}(t) > x\} = \{Y_{i+k,j}(t+x) < Y_{i-1,j}(t)\}, \quad x > 0.
\end{equation}
By this identity, a sequence of probabilities $\PP(T_{i,i+k,j}(t) > x_n)$, for $x_n >0$ and $n \in \{1,\ldots,N\}$, can be derived from the joint distribution of the random vectors $Y(t),{Y(t+x_1),}\ldots,Y(t + x_N)$. In this section, we approximate the joint distribution of these random vectors with a Gaussian distribution, using fluid and diffusion limits for $Y(\cdot)$, similar to the ones we derived in \autoref{sec: scaling limits}, under the same scaling. We thus find an approximation of the distribution of $T_{i,i+k,j}(t)$, for $i \in \{1,\ldots,d\}$, $k \in \{0,\ldots,d-i+1\}$ and $j \in \{1,\ldots,m\}$.

At first {sight,} it may look restrictive to require that {type-$j$} vehicles do not overtake each other within a cell. Noticing, however, that individual vehicles within a class are not systematically faster of slower than one another, it is anticipated that the approximation of the travel-time distribution is reasonably accurate. Moreover, when overtaking behavior within a class of vehicles occurs often, our setup allows the class to be split into two classes, say, a fast and a slow one.

Initially, one would think that the limit results for $Y(\cdot)$ directly follow from the ones we derived for $\rho(\cdot)$ in \autoref{sec: scaling limits}; with the vector $Y$ ordered lexicographically, we have
\begin{equation}\label{Eqn: relatie rho en Y}
	\rho(t) = L X(0) + L H\, Y(t),
\end{equation}
in accordance with \eqref{Eqn: drho}. The results for $\rho(\cdot)$, however, cannot be translated into results for $Y(\cdot)$, due to the easily verified fact the matrix $H$ is singular. A simple illustration of this is that when $Y(t)$ satisfies \eqref{Eqn: relatie rho en Y}, then so does $Y(t) + c$ for every $c \in \R$. %Consequently, we cannot transform the limit results pertaining to $\rho(\cdot)$, as identified  in \autoref{sec: scaling limits}, to obtain the corresponding limit results for $Y(\cdot)$. 
As a consequence, we have to follow a different approach. 

Before we present the fluid and diffusion limits for $Y(\cdot)$, we make two final observations. First, the diffusion approximation can be established along the lines of the proof of \cite[Thm.\ 8.2]{kurtz1981approximation}, but we choose to give a different, more concrete proof, which illustrates how the redefinition of the operator $\partial F$ and the corresponding weak solutions naturally follow from taking the limit. Second, the point processes $Y_{i-1,j}(\cdot)$ have intensity $q_{i-1,j}(\rho(t))$ at time $t$, which by \eqref{Eqn: relatie rho en Y} is also a function of $Y(t)$. Moreover, as $L$ and $H$ have bounded norm, this function is clearly still Lipschitz in $Y(t)$. To simplify the notation, from now on we simply say that each $Y_{i-1,j}(t)$ has intensity given by $h(Y(t))$ at  time $t$, for a Lipschitz continuous function $h(\cdot)$. When we apply the scaling that we introduced in \autoref{sec: scaling limits}, we denote the intensity by $h^n(\cdot)$ to emphasize that the argument is reduced by a factor $n$.

We now present the first result, which is the counterpart of Thm.\ \ref{Thm: Fluid limit cells}.
\begin{proposition}\label{Prop: TT fluid limit}
	Consider the sequence of scaled processes $Y^n(\cdot)$, where  $Y^n(t) := \frac{1}{n} Y(nt)$. Under \autoref{Assumption: MFD} and if $X^n(0) \to \bar{X}(0)$, we have, as $n \to \infty$, that
\[
	Y^n(\cdot) \overset{\rm a.s.}{\to} \bar{Y}(\cdot), \quad u.o.c.\
\]
where $\bar{Y}(\cdot)$ satisfies
\[
	\bar{Y}(t) = \int_0^t h(\bar{Y}(s)) \diff s.
\]
\end{proposition}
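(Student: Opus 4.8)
The plan is to recognize Proposition~\ref{Prop: TT fluid limit} as another instance of the density-dependent (spatial) population process framework of \cite[Ch.~8]{kurtz1981approximation}, exactly as was done for \autoref{Thm: Fluid limit cells}, but now applied to the \emph{unnormalized} counting vector $Y$ rather than to $\rho$. Writing the scaled process via a standard time-change, $Y^n_{i,j}(t) = \tfrac1n \tilde Y_{i,j}\!\big(n\int_0^t q_{i,j}(\rho^n(s))\diff s\big)$ for independent unit-rate Poisson processes $\tilde Y_{i,j}$ (this is the same representation underlying \eqref{Eqn: drho}, just without dividing by $\ell_i$), and then using \eqref{Eqn: relatie rho en Y} to replace the argument $\rho^n(s)$ by $L X^n(0) + LH\,Y^n(s)$, we obtain
\[
	Y^n(t) = \frac1n \tilde Y\!\left( n \int_0^t h^n\big(Y^n(s)\big)\diff s \right),
\]
which is precisely of the form \cite[Eqn.~(8.6)]{kurtz1981approximation} with jump directions the standard basis vectors and rate function $h$. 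Here $h$ is Lipschitz, since it is a composition of the Lipschitz functions $\tilde q_i$ (\autoref{Assumption: MFD}) with the affine map $Y\mapsto LX^n(0)+LH\,Y$, and $L,H$ have bounded norm; note the dependence of $X^n(0)$ on $n$ is harmless because $X^n(0)\to\bar X(0)$ and the Lipschitz constant is uniform.

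First I would verify the two hypotheses of \cite[Thm.~8.1]{kurtz1981approximation}, exactly as in the proof of \autoref{Thm: Fluid limit cells}: the finitely many rates $q_{i,j}$ are bounded (because the $\tilde q_i$ are continuous on compact domains, resp.\ truncated by $\lambda_j,\nu_j$ for the boundary cells), giving the summability condition \cite[Eqn.~(8.10)]{kurtz1981approximation}; and the Lipschitz property of $h$ gives \cite[Eqn.~(8.11)]{kurtz1981approximation}. Then \cite[Thm.~8.1]{kurtz1981approximation}, together with the hypothesis $X^n(0)\to\bar X(0)$ (equivalently $Y^n(0)=0$ for all $n$, which already converges trivially), yields $Y^n(\cdot)\overset{\rm a.s.}\to\bar Y(\cdot)$ uniformly on compacts, where $\bar Y$ solves the integral equation $\bar Y(t)=\int_0^t h(\bar Y(s))\diff s$; existence and uniqueness of this solution follow from the Lipschitz property of $h$ via Picard--Lindel\"of.

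I expect the only genuinely non-routine point is bookkeeping around the intensity: one must confirm that the ``true'' intensity $q_{i-1,j}(\rho(t))$ can legitimately be rewritten as a Lipschitz function $h$ of $Y(t)$ alone (with $X(0)$ folded in as a parameter), and that passing to the limit commutes with this substitution — i.e.\ that $h^n(Y^n(s)) = q_{i,j}(L X^n(0)+LH\,Y^n(s))$ and that $X^n(0)\to\bar X(0)$ lets us identify the limiting drift as $h(\bar Y(s))$ with $\bar X(0)$ in place of $X^n(0)$. Since $L,H$ are fixed bounded linear maps and the $q_{i,j}$ are uniformly Lipschitz, this is a short continuity argument, but it is the step where the singularity of $H$ (which is what forced us away from working with $\rho$) is quietly sidestepped: we never need to invert $H$, only to push $Y$ forward through it. Everything else is a transcription of the argument already given for \autoref{Thm: Fluid limit cells}.
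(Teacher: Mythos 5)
Your proposal is correct and follows essentially the same route as the paper: the authors likewise fold the initial state into the intensity via \eqref{Eqn: relatie rho en Y} to obtain a Lipschitz rate function $h$ of $Y$ alone (this is done in the paragraph preceding the proposition), and then invoke the argument of \cite[Thm.\ 8.1]{kurtz1981approximation}, whose hypotheses hold because $h$ is Lipschitz and uniformly bounded. Your extra care about the $n$-dependence of $X^n(0)$ in the rate function is a point the paper glosses over, but it resolves exactly as you say.
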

\begin{proof}
	To prove the proposition, we can replicate the proof of \cite[Thm.\ 8.1]{kurtz1981approximation}. The conditions are met, as $h(\cdot)$ is Lipschitz in $Y(\cdot)$, and $h(\cdot)$ has compact support and is therefore uniformly bounded.
\end{proof}

We immediately present the diffusion limit for $Y(\cdot)$, which is the counterpart of Thm.~\ref{Thm: diffusion cells}. Here $\partial h$ is the weak derivative of $h$, defined similarly as $\partial F$ in \autoref{sec: scaling limits}. Also, $\bar\Sigma(\bar{Y}(t))$ is the $(d+1)m\times (d+1)m$ diagonal matrix with the square roots of $h(\bar{Y}(t))$ on the diagonal.

\begin{theorem}\label{Ydiff}
Consider the sequence of  centered and scaled processes $\{\hat Y^n(\cdot)\}_n$, where
$
	\hat{Y}^n(t) := \sqrt{n} \left( Y^n(t) - \bar{Y}(t) \right).
$
Under \autoref{Assumption: MFD} and if $\sqrt{n} \,| X^n(0) - \bar{X}(0) | \to 0$, we have, as $n\to\infty$, that \[\hat{Y}^n(\cdot) \Rightarrow \hat{Y}(\cdot),\] where $\hat{Y}(\cdot)$ satisfies
\[
	\diff \hat{Y}(t) = \partial h(\bar{Y}(t)) \hat{Y}(t) \diff t + \bar\Sigma(\bar{Y}(t))\, \diff {B}(t), \quad \hat{Y}(0) = 0,
\]
with $B(\cdot)$ a $(d+1)m$-dimensional vector of independent standard Brownian motions.\end{theorem}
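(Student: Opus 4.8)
The plan is to give the ``concrete'' proof alluded to in the text, built directly on the random-time-change representation of the counting vector $Y(\cdot)$, so that the weak reading of $\partial h$ emerges from the limit rather than being imposed. First I would represent each coordinate as $Y_k(t)=\tilde Y_k\big(\int_0^t h_k(Y(s))\diff s\big)$ with $(\tilde Y_k)$ independent unit-rate Poisson processes, which is legitimate because the intensities are bounded and no two coordinates jump simultaneously (cf.\ \eqref{Eqn: dX}). Applying the scaling $t\mapsto nt$, $\ell_i\mapsto n\ell_i$ and changing variables turns this into $Y^n(t)=\tfrac1n\,\tilde Y\big(n\!\int_0^t h^n(Y^n(s))\diff s\big)$ (componentwise in the time argument), where $h^n$ is the rescaled intensity, Lipschitz uniformly in $n$, with moreover $\sqrt n\,\|h^n-h\|_\infty\to0$ as a consequence of the hypothesis $\sqrt n\,|X^n(0)-\bar X(0)|\to0$. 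Writing $W_k(u):=\tilde Y_k(u)-u$ for the compensated (martingale) Poisson processes and subtracting the fluid limit $\bar Y(t)=\int_0^t h(\bar Y(s))\diff s$ of \autoref{Prop: TT fluid limit}, I would split $\hat Y^n=A^n+M^n$ with
\[
	A^n(t):=\sqrt n\!\int_0^t\!\big(h^n(Y^n(s))-h(\bar Y(s))\big)\diff s,\qquad
	M^n(t):=\frac1{\sqrt n}\,W\Big(n\!\int_0^t h^n(Y^n(s))\diff s\Big).
\]

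The martingale part is the routine half. By the functional central limit theorem for the Poisson process, $\tfrac1{\sqrt n}W_k(n\cdot)\Rightarrow B_k(\cdot)$, jointly over $k$ to independent standard Brownian motions; since \autoref{Prop: TT fluid limit} gives $Y^n\to\bar Y$ uniformly on compacts almost surely, the random time changes $\tau^n_k(t):=\int_0^t h^n_k(Y^n(s))\diff s$ converge uniformly on compacts to the deterministic continuous $\tau_k(t):=\int_0^t h_k(\bar Y(s))\diff s$, and a random-time-change argument then yields $M^n\Rightarrow M$ with $M_k(t)=B_k(\tau_k(t))\overset{\mathrm d}{=}\int_0^t\sqrt{h_k(\bar Y(s))}\diff B_k(s)$, i.e.\ $M=\int_0^\cdot\bar\Sigma(\bar Y(s))\diff B(s)$. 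For tightness of $\hat Y^n$ I would use the uniform Lipschitz bound $|A^n(t)|\le K\int_0^t|\hat Y^n(s)|\diff s+o(1)$, whence $|\hat Y^n(t)|\le K\int_0^t|\hat Y^n(s)|\diff s+\sup_{s\le t}|M^n(s)|+o(1)$; Gr\"onwall then gives $\sup_{s\le T}|\hat Y^n(s)|\le e^{KT}\big(\sup_{s\le T}|M^n(s)|+o(1)\big)$, which is tight, and the same estimate bounds the increments of $A^n$, so $\{A^n\}$ is $C$-tight and $\{\hat Y^n\}=\{A^n+M^n\}$ is tight in $D_{\R^{(d+1)m}}[0,\infty)$.

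The heart of the argument is identifying every subsequential limit. Passing to a subsequence and invoking the Skorohod representation theorem, I may assume $\hat Y^n\to\hat Y$ and $M^n\to M=\int_0^\cdot\bar\Sigma(\bar Y(s))\diff B(s)$ almost surely; because the limits are continuous the convergence is uniform on compacts, so $Y^n\to\bar Y$ uniformly on compacts, $A^n=\hat Y^n-M^n\to\hat Y-M$ uniformly on compacts, and in particular $\sup_n\sup_{s\le T}|\hat Y^n(s)|<\infty$ almost surely. Writing $Y^n(s)=\bar Y(s)+\tfrac1{\sqrt n}\hat Y^n(s)$, so that $\tfrac1{\sqrt n}\hat Y^n(s)\to0$, the definition of the derivative gives $\sqrt n\,\big(h(Y^n(s))-h(\bar Y(s))\big)\to\partial h(\bar Y(s))\,\hat Y(s)$ for every $s$ at which $h$ is differentiable at $\bar Y(s)$; by the same considerations as in \autoref{sec: scaling limits} (the redefinition of $\partial F$, equivalently of $\partial h$, and the fact that $s\mapsto\partial h(\bar Y(s))$ is bounded and almost everywhere continuous) the exceptional set of $s$ has Lebesgue measure zero, and the integrands are dominated by $K\sup_n\sup_{s\le T}|\hat Y^n(s)|<\infty$, so dominated convergence gives $A^n(t)\to\int_0^t\partial h(\bar Y(s))\,\hat Y(s)\diff s$ uniformly in $t$ on compacts (the contribution of $h^n-h$ vanishing since $\sqrt n\,\|h^n-h\|_\infty\to0$). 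Hence $\hat Y(t)=\int_0^t\partial h(\bar Y(s))\,\hat Y(s)\diff s+\int_0^t\bar\Sigma(\bar Y(s))\diff B(s)$ with $\hat Y(0)=0$; this linear stochastic differential equation has bounded coefficients and therefore a unique strong solution, so the limit is independent of the chosen subsequence and $\hat Y^n\Rightarrow\hat Y$.

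I expect the main obstacle to be precisely this drift identification for non-smooth MFDs: the limit must absorb the times $s$ at which $\bar Y(s)$ sits on a kink of $h$, which is what forces the $\partial h$-convention inherited from \autoref{sec: scaling limits} and the observation that the dominating constant $\sup_n\sup_{s\le T}|\hat Y^n(s)|$ is almost surely finite only because Skorohod convergence to a continuous limit upgrades to uniform convergence. Everything else — the Poisson representation, the functional CLT plus random time change for $M^n$, and the Gr\"onwall tightness estimate — is bookkeeping.
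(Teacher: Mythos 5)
Your proposal is correct and reaches the same statement, but the core of your argument is genuinely different from the paper's. Both proofs start from the same decomposition of $\hat Y^n$ into a compensated-Poisson martingale part plus a drift part measuring $\sqrt n\bigl(h^n(Y^n)-h(\bar Y)\bigr)$, and both must confront the same delicate point at the end. The paper, however, linearizes the drift through the difference-quotient matrix $\Psi^n$, passes to $\hat Z^n(t)=\exp(-\zeta^n(t))\hat Y^n(t)$ so that $\hat Y^n$ is expressed as a continuous functional of a single stochastic integral, proves convergence of $\zeta^n$ to $\int_0^\cdot \partial h(\bar Y(s))\diff s$ and of $n^{-1/2}M^n$ to the scaled Brownian motion via the martingale FCLT, and then invokes the P-UT property and \cite[Thm.\ VI.6.22]{js2003} to pass to the limit in the stochastic integral; tightness never appears explicitly because it is absorbed into that machinery. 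You instead handle the martingale part by the Poisson FCLT plus a random time change (equivalent in substance), prove tightness by hand via a Gr\"onwall bound $\sup_{s\le T}|\hat Y^n(s)|\le e^{KT}(\sup_{s\le T}|M^n(s)|+o(1))$, identify subsequential limits after a Skorohod representation by dominated convergence of the difference quotients, and conclude by uniqueness in law of the limiting linear SDE. Your route is more elementary and self-contained (no P-UT, no continuity of the integrating-factor map), at the price of needing the uniqueness step; the paper's route yields the limit in one shot as an explicit stochastic integral. Two small cautions: first, your claim that the exceptional set $\{s:\ h\ \text{not differentiable at}\ \bar Y(s)\}$ is Lebesgue-null does not follow from Rademacher alone --- the fluid path could linger on a kink of $h$ --- but the paper's own proof relies on exactly the same convention for $\partial h$, so you are at parity with it rather than behind it; second, your reduction of $\sqrt n\,\|h^n-h\|_\infty\to0$ to the hypothesis on $X^n(0)$ should be stated via the Lipschitz bound $\|h^n-h\|_\infty\le K\,|L|\,|X^n(0)-\bar X(0)|$ coming from \eqref{Eqn: relatie rho en Y}, which is immediate but worth writing down.
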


\begin{proof}
	We begin by rewriting $\hat{Y}^n(\cdot)$ by adding and subtracting the compensator of $Y^n(\cdot)$,
	\begin{align*}
		\hat{Y}^n(t) & = n^{1/2} \left(Y^n(t) - \frac{1}{n} \int_0^{nt} h^n(Y(s)) \diff s + \frac{1}{n} \int_0^{nt} h^n(Y(s)) \diff s - \int_0^{t} h(\bar{Y}(s)) \diff s  \right) \\
		& = n^{-1/2} M^n(t) + \int_0^t \Psi^n(s) \hat{Y}^n(s) \diff s,
	\end{align*}
where we define the sequences of martingales $\{M^n(\cdot)\}_n$ and processes $\{\Psi^n(\cdot)\}_n$ by
\begin{equation*}
	M^n(t) := Y(nt) - \int_0^{nt} h^n(Y(s)) \diff s,  \quad \Psi_{kl}^n(t) :=  \frac{h_k(Y^n(s)) - h_k(\bar{Y}(s))}{ Y_l^n(s) - \bar{Y_l}(s) },
\end{equation*}
where $k,l \in \{1,\ldots,(d+1)m\}$. In differential notation we thus have
\[
	\diff \hat{Y}^n(t) = \Psi^n(t) \hat{Y}^n(t) \diff t + n^{-1/2} \diff M^n(t),
\]
from which we derive, with $\zeta^n(t) :=  \int_0^t \Psi^n(s) \diff s $ and $\hat{Z}^n(t) := \exp(-\zeta^n(t)) \hat{Y}^n(t)$,
\begin{equation}\label{Eqn: dZ, TT diffusion}
	\diff \hat{Z}^n(t) = n^{-1/2} \exp(-\zeta^n(t)) \diff M^n(t).
\end{equation}
Our derivation consists of three steps. 
\begin{itemize}
\item[$\circ$]
In the first step we wish to show that $\exp(-\zeta^n(\cdot))$ converges almost surely, u.o.c.\ to $\int_0^\cdot\partial h(\bar{Y}(s)) \diff s$. By \autoref{Prop: TT fluid limit}, $Y^n(t) \overset{\rm a.s.}{\to} \bar{Y}(t)$ u.o.c. As the function $h(\cdot)$ is almost everywhere differentiable, we can rewrite the integral in the definition of $\zeta^n(\cdot)$ up to the null set where the derivative does not exist. Combining these arguments, and using that $\partial h$ is uniformly bounded, an application of the  dominated convergence yields, for $t \geq 0$,
\begin{equation}\label{eq: proof diffusion A, convergence zeta}
	\zeta^n(t) \overset{\rm a.s.}{\to} \zeta(t) := \int_0^t \partial h(\bar{Y}(s)) \diff s.
\end{equation} 
%with $\partial h(Y(s))$ defined as the matrix of partial derivatives at every $s$ where the derivative exists, and with components replaced by the Lipschitz constant of $h$, if the derivative of the respective component does not exist at a time $s$.  
By `continuous mapping' and \cite[Thm.\ VI.2.15]{js2003} (bearing in mind that $x \mapsto -\exp(-x)$ is increasing and continuous), we have 
\begin{equation*}
\sup_{s \leq t} | \exp(-\zeta^n(t)) - {\exp(-\zeta(t))} | \overset{\rm a.s.}{\to} 0.
\end{equation*}

\item[$\circ$]In the second step we show that the martingales $n^{-1/2} M^n(\cdot)$ converge weakly to a $(d+1)m$-dimensional scaled Brownian motion. We want to apply the martingale functional central limit theorem (MFCLT) \cite[Thm.\ VIII.3.22]{js2003}. To this end, we verify the corresponding conditions. For every $n$, $M^n(\cdot)$ is a compensated time-inhomogeneous Poisson process with uniformly bounded rate, implying that $n^{-1/2} M^n(\cdot)$ is locally square integrable, which is the first condition. Moreover, it is immediate that the jumps $n^{-1/2} (M^n(t)-\lim_{s \uparrow t} M^n(s)) \leq n^{-1/2} \to 0$, which by \cite[Eqn.\ (VIII.3.5)]{js2003} is sufficient for the second condition.

Finally we need to verify that the normalized predictable quadratic co-variation matrix, {i.e.,}  $\langle n^{-1/2} M^n \rangle_t$, converges in probability to a deterministic limit. We may assume, without loss of generality, that $\langle M^n_k , M^n_l \rangle_t = 0$, since we can  construct the processes $Y(\cdot)$ as independent unit-rate Poisson processes on a product space, and apply an absolutely continuous change of measure so that $Y(\cdot)$ has the required intensity (cf.\ \cite[Section VI.2]{bremaud1981}). By construction and an application of Girsanov's theorem, we have $\langle M^n_k , M^n_l \rangle_t = 0$ almost surely. 
To conclude our argument, we have
\begin{align*}
	\langle n^{-1/2} M^n \rangle_t & = n^{-1} \langle M^n \rangle_t  
	 = \textrm{diag} \left\{n^{-1} \int_0^{nt} h^n(Y(s)) \diff s \right\} \\
	& = \textrm{diag} \left\{ \int_0^{t} h(Y^n(s)) \diff s \right\} 
 \overset{\rm a.s.}{\to} \textrm{diag} \left\{ \int_0^t h(\bar{Y}(s)) \diff s \right\},
\end{align*} where the convergence is a consequence of \autoref{Prop: TT fluid limit}. By the MFCLT, it now follows that $n^{-1/2} M^n(\cdot)$ converges weakly to a scaled Brownian motion $\tilde{B}(\cdot)$ with $\langle \tilde{B} \rangle_t =\textrm{diag} \{ \int_0^t h(\bar{Y}(s)) \diff s\}$.

\item[$\circ$]For the third and final step, we combine previous results to conclude weak convergence of the stochastic integrals in Equation (\ref{Eqn: dZ, TT diffusion}). We want to use \cite[Thm. VI.6.22]{js2003} for which we need the so-called P-UT property, which follows from our second step and \cite[Prop.\ 6.13]{js2003}. We use Slutsky's lemma to obtain joint weak convergence of $\exp(-\zeta^n(\cdot))$ and $n^{-1/2}M^n(\cdot)$ and conclude that $Z^n(\cdot) = \int_0^{\cdot} n^{-1/2} \exp(-\zeta^n(u))\, {\rm d}M^n(u)$ converges weakly to $\int_0^{\cdot} \exp(-\zeta(u))\, {\rm d}\tilde{B}(u)$. Finally, by applying `Slutsky' again in combination with `continuous mapping', we obtain the weak convergence of $\hat{Y}^n(\cdot)$ to a process $\hat{Y}(\cdot)$ defined through
\begin{equation}\label{YY}
	\diff \hat{Y}(t) = \partial h(\bar{Y}(t)) \hat{Y}(t) \diff t + \diff \tilde{B}(t), \quad \hat{Y}(0) = 0.
\end{equation}\end{itemize}
The stated result follows directly from (\ref{YY}), using the definition of $\tilde B(\cdot).$
\end{proof}

We can now approximate the travel-time distribution by combining  (\ref{Eqn: relation Y and T}) with \autoref{Prop: TT fluid limit} and \autoref{Ydiff}. In the next {section,} this procedure {will be} illustrated through a series of examples.

\section{Numerical Examples} 
\label{NE}
%\textcolor{red}{Aan het einde: in deze sectie nog schuiven met plaatjes, zodat ze netjes in het paper staan}

Now that we have established fluid and diffusion limits for both $\rho(\cdot)$ and $Y(\cdot)$, we illustrate their usefulness by a series of numerical experiments. These reproduce traffic phenomena like \textit{forward propagation}, \textit{backward moving jams}, and \textit{shockwave formation}. We start the section, however, by assessing the validity and accuracy of the approximation.

\subsection{Generating Diffusion-based Approximations}
\label{sec: generating diff approx}

In our limit theorems, we have scaled both the length of the cells and time by a factor~$n$. However, this parameter $n$ is `artificial', in that in a real-world situation, such a scaling parameter obviously does not have a physical meaning. In this subsection we point out how our limiting results (for $n$ large) can be converted into approximations (in which $n$ should not appear). 
%In fact, we show that a model $\rho$ with cell lengths~$l_i$ and with time-parameter~$t$, can be considered as the $n$-th element in a sequence of models $\{\rho^m\}_m$, where $\rho^1$ has cell lengths~$l_i / n$ and with time-parameter~$t/n$. As such, it is conceivable that $l_i$ should be large enough, for the asymptotics to kick in; the artificial $n$ can now be large. 
We then focus on  accuracy of the resulting approximation by comparing it with simulation-based estimates. 

\subsubsection{Approximation; Role of Scaling Parameter $n$} 
We proceed by providing an approximation for the distribution of $\rho(\cdot)$, based on the diffusion limit that we established in Section \ref{SL}. 
By an explicit calculation we then show that this approximation  is independent of the scaling parameter $n$, as it should. 

In \autoref{Thm: Fluid limit cells} and \autoref{Thm: diffusion cells} we state limiting results for the processes $\rho^n(nt)$ and  $\hat{\rho}^n(t):=\,\sqrt{n} (\rho^n(nt) - \bar{\rho}(t))$. These show that
\[
	\rho^n(n\,\cdot) \overset{\rm a.s.}{\to} \bar{\rho}(\cdot)\quad \textrm{and} \quad \hat{\rho}^n(\cdot) \Rightarrow \hat{\rho}(\cdot).
\]
Rewriting gives, for $n$ large, an expression for $\rho^n(\cdot)$ in terms of a fluid limit $\rhob(\cdot)$ and a (zero-mean) Gaussian perturbation around it:
\[
\rho^n(nt) \approx \rhob(t) + \frac{1}{\sqrt{n}} \rhoh(t).
\]
To obtain the prelimit process $\rho(\cdot)$ back on the left-hand side, we reverse the scaling, by slowing time down by a factor $n$ and dividing cell lengths by $\ell_i / n$. This gives the distributional approximation
\begin{equation}\label{Eqn: scaling invariant n rho}
	\rho(t) \approx \bar{\rho}(t/n) + \frac{1}{\sqrt{n}} \hat{\rho}(t/n);
\end{equation}
here $\rho(\cdot)$ corresponds to cells of  length $\ell_i$, whereas the processes on the right-hand side correspond to cells of  length $\ell_i / n$. 
Now we can show that the right-hand side of \eqref{Eqn: scaling invariant n rho} actually does not depend on $n$. For $i\in\{1,\ldots, d\}$ and $j\in\{1,\ldots, m\}$, we have
\begin{align*}
	\bar{\rho}_{ij} (t/n) 
	& = \bar{\rho}_{ij}(0) + \frac{n}{\ell_i} \int_0^{t/n} q_{i-1,j}(\bar{\rho}(s)) \diff s - \frac{n}{\ell_i} \int_0^{t/n} q_{i,j}(\bar{\rho}(s)) \diff s \\ 
	& = \bar{\rho}_{ij}(0) + \frac{n}{\ell_i} \frac{1}{n} \int_0^{t} q_{i-1,j}(\bar{\rho}(\tfrac{s}{n})) \diff s - \frac{n}{\ell_i} \frac{1}{n} \int_0^{t} q_{i,j}(\bar{\rho}(\tfrac{s}{n})) \diff s.
\end{align*}
Now, setting $\bar{\rho}^\circ(t) := \bar{\rho}(\tfrac{t}{n})$, we get
\[
	\bar{\rho}^\circ(t) = \bar{\rho}^\circ(0) + \frac{1}{\ell_i} \int_0^{t} q_{i-1,j}(\bar{\rho}^\circ(s)) \diff s - \frac{1}{\ell_i} \int_0^{t} q_{i,j}(\bar{\rho}^\circ(s)) \diff s.
\]
For $\hat{\rho}(\cdot)$ the calculation is similar. For $i\in\{1,\ldots, d\}$ and $j\in\{1,\ldots, m\}$,
\begin{align*}
 \frac{1}{\sqrt{n}} \rhoh_{ij}(t/n) 
&=  \frac{1}{\sqrt{n}} \frac{n}{\ell_i} \int_0^{t/n} \nabla q_{i-1,j}(\rhob(s)) \cdot \rhoh(s) \diff s 
- \frac{1}{\sqrt{n}} \frac{n}{\ell_i} \int_0^{t/n} \nabla q_{i,j}(\rhob(s)) \cdot \rhoh(s) \diff s \\
& \:\:\:\:+ \frac{1}{\sqrt{n}} \frac{n}{\ell_i} \int_0^{t/n} \sqrt{q_{i-1,j}(\rhob(s))} \diff B_{i-1,j}(s) 
- \frac{1}{\sqrt{n}} \frac{n}{\ell_i} \int_0^{t/n} \sqrt{q_{i,j}(\rhob(s))} \diff B_{i,j}(s) \\
&=  \frac{n}{\ell_i} \frac{1}{n} \int_0^{t} \nabla q_{i-1,j}(\rhob(\tfrac{s}{n})) \cdot \frac{1}{\sqrt{n}} \rhoh(\tfrac{s}{n}) \diff s 
- \frac{n}{\ell_i} \frac{1}{n} \int_0^{t} \nabla q_{i,j}(\rhob(\tfrac{s}{n})) \cdot \frac{1}{\sqrt{n}} \rhoh(\tfrac{s}{n}) \diff s \\
& \:\:\:\:+ \frac{1}{\sqrt{n}} \frac{n}{\ell_i} \frac{1}{\sqrt{n}} \int_0^{t} \sqrt{q_{i-1,j}(\rhob(\tfrac{s}{n}))} \diff B_{i-1,j}(s) 
- \frac{1}{\sqrt{n}} \frac{n}{\ell_i} \frac{1}{\sqrt{n}} \int_0^{t} \sqrt{q_{i,j}(\rhob(\tfrac{s}{n}))} \diff B_{i-1,j}(s),
\end{align*}
where we have used the scale-invariance of Brownian motion, {i.e.,} $B(s/n) \overset{\rm d}{=} B(s) / \sqrt{n}$. By setting $\hat{\rho}^\circ(t) :=  \hat{\rho}(t/n)/\sqrt{n}$, we get
\begin{multline*}
\hat{\rho}^\circ(t) = \frac{1}{\ell_i} \int_0^{t} \nabla q_{i-1,j}(\bar{\rho}^\circ(s)) \cdot \hat{\rho}^\circ(s) \diff s -\frac{1}{\ell_i} \int_0^{t} \nabla q_{i,j}(\bar{\rho}^\circ(s)) \cdot \hat{\rho}^\circ(s) \diff s \\
+ \frac{1}{\ell_i} \int_0^{t} {\sqrt{q_{i-1,j}(\bar{\rho}^\circ(s))} \diff B_{i-1,j}(s) - \frac{1}{\ell_i} \int_0^{t} \sqrt{q_{i,j}(\hat{\rho}^\circ(s))} \diff B_{i,j}(s).}
\end{multline*}
In conclusion, we end up with the distributional approximation
\begin{equation}\label{APPR}
	\rho(\cdot) \approx \bar{\rho}^\circ(\cdot) + \hat{\rho}^\circ(\cdot),
\end{equation}
which is independent of $n$. More specifically, we have that $\rho(\cdot)$ is approximately a Gaussian process, where 
\begin{equation}\label{Eqn: mean, var rho}
 \E [\rho(t)] \approx \bar{\rho}^\circ(t),\:\:\:\textrm{var}[\rho(t)] \approx \textrm{var} [\hat{\rho}^\circ(t)],
\end{equation}
with $\bar{\rho}^\circ(\cdot)$ being deterministic and $\hat{\rho}^\circ(\cdot)$  a zero-mean Gaussian process.

% Figure of comparison mean
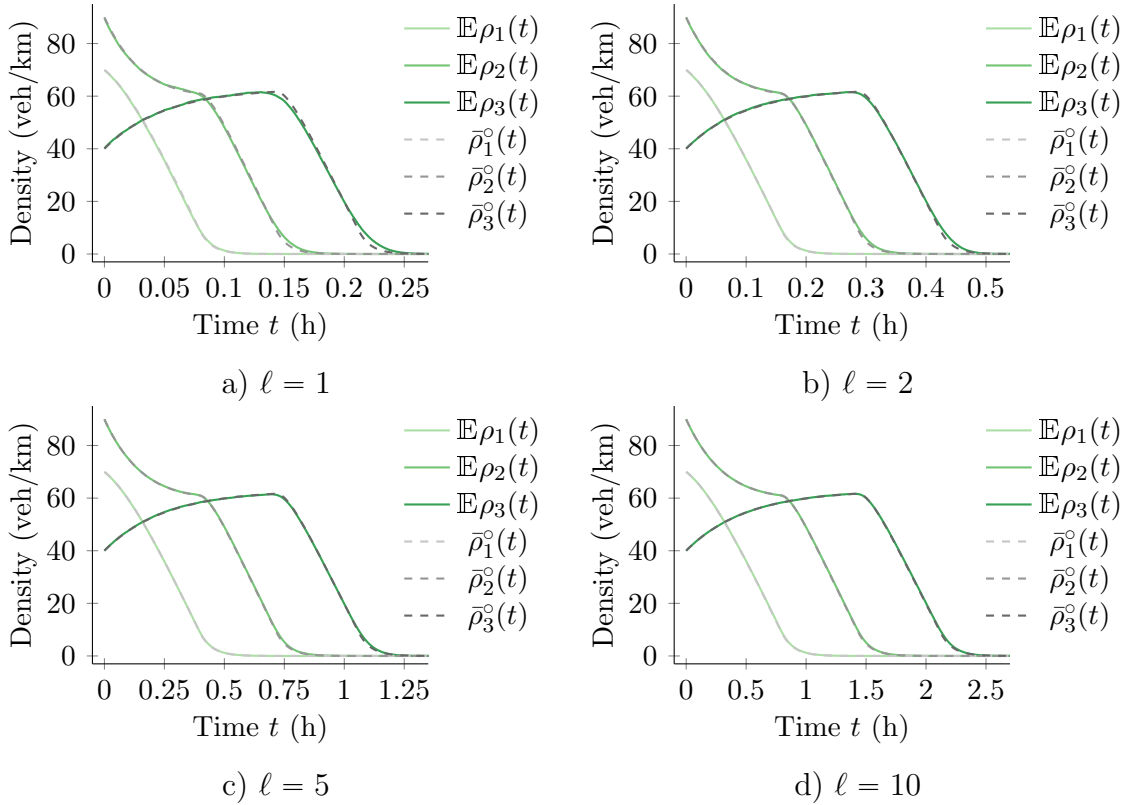
\begin{figure}
\begin{tabular}{cc}

% Figure means l=1
\begin{tikzpicture}	
\begin{axis}[
    xlabel={Time $t$ (h)},    ylabel={Density (veh/km)},
    xmin=-0.01, xmax=0.27,    ymin=-3, ymax=95,
    xtick={0, 0.05, 0.1,0.15,0.2,0.25},    ytick={0,20,40,60,80},
    xticklabel style={/pgf/number format/fixed, /pgf/number format/precision=2},
	scaled x ticks=false,
    xtick pos=left,    ytick pos=left,    axis lines=left,    
    width = 6cm,    height = 5cm,
    no markers,    x axis line style=-,    y axis line style=-,
    y label style={at={(axis description cs:0.15,.5)},anchor=south},
    font = \small,
    legend style={at={(0.9,1)},anchor=north west, draw=none}
]
\addplot+[thick,color11] table [x={time}, y={mean_sim_0}] {pd_mean_df_l=1.dat};
\addplot+[thick,color12] table [x={time}, y={mean_sim_1}] {pd_mean_df_l=1.dat};
\addplot+[thick,color13] table [x={time}, y={mean_sim_2}] {pd_mean_df_l=1.dat};
\addplot+[thick,dashed,color14] table [x={time}, y={mean_0}] {pd_mean_df_l=1.dat};
\addplot+[thick,dashed,color15] table [x={time}, y={mean_1}] {pd_mean_df_l=1.dat};
\addplot+[thick,dashed,color16] table [x={time}, y={mean_2}] {pd_mean_df_l=1.dat};
\legend{$\E\rho_1(t)$,$\E\rho_2(t)$,$\E\rho_3(t)$,
			$\bar{\rho}_1^\circ(t)$,$\bar{\rho}_2^\circ(t)$,$\bar{\rho}_3^\circ(t)$}
\end{axis}
\end{tikzpicture} & 

% Figure means l=2
\begin{tikzpicture}	
\begin{axis}[
    xlabel={Time $t$ (h)},    ylabel={Density (veh/km)},
    xmin=-0.02, xmax=0.54,    ymin=-3, ymax=95,
    xtick={0,0.1,0.2,0.3,0.4,0.5},    ytick={0,20,40,60,80},
    xticklabel style={/pgf/number format/fixed, /pgf/number format/precision=2},
	scaled x ticks=false,
    xtick pos=left,    ytick pos=left,    axis lines=left,
    width = 6cm,    height = 5cm,
    no markers,    x axis line style=-,    y axis line style=-,
    y label style={at={(axis description cs:0.15,.5)},anchor=south},
    font = \small,
    legend style={at={(0.9,1)},anchor=north west, draw=none}
]
\addplot+[thick,color11] table [x={time}, y={mean_sim_0}] {pd_mean_df_l=2.dat};
\addplot+[thick,color12] table [x={time}, y={mean_sim_1}] {pd_mean_df_l=2.dat};
\addplot+[thick,color13] table [x={time}, y={mean_sim_2}] {pd_mean_df_l=2.dat};
\addplot+[thick,dashed,color14] table [x={time}, y={mean_0}] {pd_mean_df_l=2.dat};
\addplot+[thick,dashed,color15] table [x={time}, y={mean_1}] {pd_mean_df_l=2.dat};
\addplot+[thick,dashed,color16] table [x={time}, y={mean_2}] {pd_mean_df_l=2.dat};
\legend{$\E\rho_1(t)$,$\E\rho_2(t)$,$\E\rho_3(t)$,
			$\bar{\rho}_1^\circ(t)$,$\bar{\rho}_2^\circ(t)$,$\bar{\rho}_3^\circ(t)$}
\end{axis}
\end{tikzpicture} \\
a) $ \ell = 1$ & b) $ \ell = 2$ \\ 

% Figure means l=5
\begin{tikzpicture}	
\begin{axis}[
    xlabel={Time $t$ (h)},    ylabel={Density (veh/km)},
    xmin=-0.05, xmax=1.35,    ymin=-3, ymax=95,
    xtick={0,0.25,0.5,0.75,1,1.25},    ytick={0,20,40,60,80},
    xticklabel style={/pgf/number format/fixed, /pgf/number format/precision=2},
	scaled x ticks=false,
    xtick pos=left,    ytick pos=left,    axis lines=left,
    width = 6cm,    height = 5cm,
    no markers,    x axis line style=-,    y axis line style=-,
    y label style={at={(axis description cs:0.15,.5)},anchor=south},
    font = \small,
    legend style={at={(0.9,1)},anchor=north west, draw=none}
]
\addplot+[thick,color11] table [x={time}, y={mean_sim_0}] {pd_mean_df_l=5.dat};
\addplot+[thick,color12] table [x={time}, y={mean_sim_1}] {pd_mean_df_l=5.dat};
\addplot+[thick,color13] table [x={time}, y={mean_sim_2}] {pd_mean_df_l=5.dat};
\addplot+[thick,dashed,color14] table [x={time}, y={mean_0}] {pd_mean_df_l=5.dat};
\addplot+[thick,dashed,color15] table [x={time}, y={mean_1}] {pd_mean_df_l=5.dat};
\addplot+[thick,dashed,color16] table [x={time}, y={mean_2}] {pd_mean_df_l=5.dat};
\legend{$\E\rho_1(t)$,$\E\rho_2(t)$,$\E\rho_3(t)$,
			$\bar{\rho}_1^\circ(t)$,$\bar{\rho}_2^\circ(t)$,$\bar{\rho}_3^\circ(t)$}
\end{axis}
\end{tikzpicture} & 

% Figure means l=10
\begin{tikzpicture}	
\begin{axis}[
    xlabel={Time $t$ (h)},    ylabel={Density (veh/km)},
    xmin=-0.1, xmax=2.7,    ymin=-3, ymax=95,
    xtick={0,0.5,1,1.5,2,2.5},    ytick={0,20,40,60,80},
    xticklabel style={/pgf/number format/fixed, /pgf/number format/precision=2},
	scaled x ticks=false,
    xtick pos=left,    ytick pos=left,    axis lines=left,
    width = 6cm,    height = 5cm,
    no markers,    x axis line style=-,    y axis line style=-,
    y label style={at={(axis description cs:0.15,.5)},anchor=south},
    font = \small,
    legend style={at={(0.9,1)},anchor=north west, draw=none}
]
\addplot+[thick,color11] table [x={time}, y={mean_sim_0}] {pd_mean_df_l=10.dat};
\addplot+[thick,color12] table [x={time}, y={mean_sim_1}] {pd_mean_df_l=10.dat};
\addplot+[thick,color13] table [x={time}, y={mean_sim_2}] {pd_mean_df_l=10.dat};
\addplot+[thick,dashed,color14] table [x={time}, y={mean_0}] {pd_mean_df_l=10.dat};
\addplot+[thick,dashed,color15] table [x={time}, y={mean_1}] {pd_mean_df_l=10.dat};
\addplot+[thick,dashed,color16] table [x={time}, y={mean_2}] {pd_mean_df_l=10.dat};
\legend{$\E\rho_1(t)$,$\E\rho_2(t)$,$\E\rho_3(t)$,
			$\bar{\rho}_1^\circ(t)$,$\bar{\rho}_2^\circ(t)$,$\bar{\rho}_3^\circ(t)$}
\end{axis}
\end{tikzpicture} \\
c) $ \ell = 5$ & d) $ \ell = 10$
\end{tabular}
\caption{Comparison of $\rhob_i^\circ(t)$ and $\E \rho_i(t)$, the latter estimated using simulation, for $i = 1,2,3$.
\label{Fig: comparison mean rho}}
\end{figure}

\subsubsection{Validity and Accuracy of the Approximation}

We proceed by validating the approximation (\ref{APPR}) by simulation. We use the Daganzo MFD, cf.\ Example~\ref{Example: Daganzo} in \autoref{sec: example MFDs}, with parameters $v^f = 100$ km/h, $w = 20$ km/h, $q^{{\rm max}} = 1800$ veh/h, $\rho^{{\rm jam}} = 105$ veh/km, $\lambda = 0$ veh/h and $\nu = 900$ veh/h. We consider a segment with $d = 3$ cells, each of them having length $\ell_i=\ell$, where $\ell \in \{1,2,5,10\}$ km, to illustrate that the accuracy of the approximation improves when  $\ell$ grows. 

For the simulation, we consider a scenario that is initialized with $\rho(0) = (70,90,40)^\top$ and runs on the time-interval $[0,1000\,\ell]$,  in seconds. We estimate the mean and standard deviation of $\rho(k\Delta t)$, for $\Delta t = \ell$ and $k = 0,1,\ldots,1000$, using $1000$ simulated samples. We compare this with the approximated mean and standard deviation. We rely on  \eqref{Eqn: mean, var rho}, where we use \eqref{eqn: dVar rho} to evaluate the variance.

The comparison of the means is given in \autoref{Fig: comparison mean rho}, whereas the comparison for the standard deviations can be found  in \autoref{Fig: comparison var rho}. Both figures show that the difference between the curves becomes smaller when $\ell$ increases. 

%Also, one can see the confirmation of \autoref{Remark: variantie als orde l_i}; the variance scales down with a factor $l_i$. The error that is made in the variance is, however, much larger than the error in the mean. It is the understanding of the authors that this is due to $\rhob$ being close to the boundary, e.g.\ when a cell empties. \textcolor{red}{Hier meer over zeggen?}

%\begin{figure}
%\begin{tabular}{cc}
%\includegraphics[width = 6.5cm]{Daganzo_Ex1_Comparison_Means_l=1} &
%\includegraphics[width = 6.5cm]{Daganzo_Ex1_Comparison_Means_l=2} \\
%a) $l_i = 1$ & b) $l_i = 2$ \\
%\includegraphics[width = 6.5cm]{Daganzo_Ex1_Comparison_Means_l=5} & 
%\includegraphics[width = 6.5cm]{Daganzo_Ex1_Comparison_Means_l=10} \\
%a) $l_i = 5$ & b) $l_i = 10$
%\end{tabular}
%\caption{Comparison of $\rhob^\circ(t)$ and $\E \rho(t)$, estimated using simulation. 
%\label{Fig: comparison mean rho}}
%\end{figure}

\begin{figure}
\begin{tabular}{cc}

% Figure l=1
\begin{tikzpicture}	
\begin{axis}[
    xlabel={Time $t$ (h)},    ylabel={Density (veh/km)},
    xmin=-0.01, xmax=0.27,    ymin=-0.1, ymax=15,
    xtick={0, 0.05, 0.1,0.15,0.2,0.25},    ytick={0,3,6,9,12,15},
    xticklabel style={/pgf/number format/fixed, /pgf/number format/precision=2},
	scaled x ticks=false,
    xtick pos=left,    ytick pos=left,    axis lines=left,    
    width = 6cm,    height = 5cm,
    no markers,    x axis line style=-,    y axis line style=-,
    y label style={at={(axis description cs:0.15,.5)},anchor=south},
    font = \small,
    legend style={at={(1,1)},anchor=north west, draw=none}
]
\addplot+[thick,color11] table [x={time}, y={std_sim_0}] {pd_std_df_l=1.dat};
\addplot+[thick,color12] table [x={time}, y={std_sim_1}] {pd_std_df_l=1.dat};
\addplot+[thick,color13] table [x={time}, y={std_sim_2}] {pd_std_df_l=1.dat};
\addplot+[thick,dashed,color14] table [x={time}, y={sigma_0}] {pd_std_df_l=1.dat};
\addplot+[thick,dashed,color15] table [x={time}, y={sigma_1}] {pd_std_df_l=1.dat};
\addplot+[thick,dashed,color16] table [x={time}, y={sigma_2}] {pd_std_df_l=1.dat};
\legend{$\sigma(\hat{\rho}^\circ_1(t))$,$\sigma(\hat{\rho}^\circ_2(t))$,$\sigma(\hat{\rho}^\circ_3(t))$,
			 $\sigma(\rho_1(t))$,$\sigma(\rho_2(t))$,$\sigma(\rho_3(t))$}
\end{axis}
\end{tikzpicture} & 

% Figure l=2
\begin{tikzpicture}	
\begin{axis}[
    xlabel={Time $t$ (h)},    ylabel={Density (veh/km)},
    xmin=-0.02, xmax=0.54,    ymin=-0.05, ymax=10,
    xtick={0,0.1,0.2,0.3,0.4,0.5},    ytick={0,2,4,6,8,10},
    xticklabel style={/pgf/number format/fixed, /pgf/number format/precision=2},
	scaled x ticks=false,
    xtick pos=left,    ytick pos=left,    axis lines=left,
    width = 6cm,    height = 5cm,
    no markers,    x axis line style=-,    y axis line style=-,
    y label style={at={(axis description cs:0.15,.5)},anchor=south},
    font = \small,
    legend style={at={(1,1)},anchor=north west, draw=none}
]
\addplot+[thick,color11] table [x={time}, y={std_sim_0}] {pd_std_df_l=2.dat};
\addplot+[thick,color12] table [x={time}, y={std_sim_1}] {pd_std_df_l=2.dat};
\addplot+[thick,color13] table [x={time}, y={std_sim_2}] {pd_std_df_l=2.dat};
\addplot+[thick,dashed,color14] table [x={time}, y={sigma_0}] {pd_std_df_l=2.dat};
\addplot+[thick,dashed,color15] table [x={time}, y={sigma_1}] {pd_std_df_l=2.dat};
\addplot+[thick,dashed,color16] table [x={time}, y={sigma_2}] {pd_std_df_l=2.dat};
\legend{$\sigma(\hat{\rho}^\circ_1(t))$,$\sigma(\hat{\rho}^\circ_2(t))$,$\sigma(\hat{\rho}^\circ_3(t))$,
			 $\sigma(\rho_1(t))$,$\sigma(\rho_2(t))$,$\sigma(\rho_3(t))$}
\end{axis}
\end{tikzpicture} \\
a) $ \ell = 1$ & b) $ \ell = 2$ \\ 

% Figure l=5
\begin{tikzpicture}	
\begin{axis}[
    xlabel={Time $t$ (h)},    ylabel={Density (veh/km)},
    xmin=-0.05, xmax=1.35,    ymin=-0.02, ymax=6.2,
    xtick={0,0.25,0.5,0.75,1,1.25},    ytick={0,1,2,3,4,5,6},
    xticklabel style={/pgf/number format/fixed, /pgf/number format/precision=2},
	scaled x ticks=false,
    xtick pos=left,    ytick pos=left,    axis lines=left,
    width = 6cm,    height = 5cm,
    no markers,    x axis line style=-,    y axis line style=-,
    y label style={at={(axis description cs:0.15,.5)},anchor=south},
    font = \small,
    legend style={at={(1,1)},anchor=north west, draw=none}
]
\addplot+[thick,color11] table [x={time}, y={std_sim_0}] {pd_std_df_l=5.dat};
\addplot+[thick,color12] table [x={time}, y={std_sim_1}] {pd_std_df_l=5.dat};
\addplot+[thick,color13] table [x={time}, y={std_sim_2}] {pd_std_df_l=5.dat};
\addplot+[thick,dashed,color14] table [x={time}, y={sigma_0}] {pd_std_df_l=5.dat};
\addplot+[thick,dashed,color15] table [x={time}, y={sigma_1}] {pd_std_df_l=5.dat};
\addplot+[thick,dashed,color16] table [x={time}, y={sigma_2}] {pd_std_df_l=5.dat};
\legend{$\sigma(\hat{\rho}^\circ_1(t))$,$\sigma(\hat{\rho}^\circ_2(t))$,$\sigma(\hat{\rho}^\circ_3(t))$,
			 $\sigma(\rho_1(t))$,$\sigma(\rho_2(t))$,$\sigma(\rho_3(t))$}
\end{axis}
\end{tikzpicture} & 

% Figure l=10
\begin{tikzpicture}	
\begin{axis}[
    xlabel={Time $t$ (h)},    ylabel={Density (veh/km)},
    xmin=-0.1, xmax=2.7,    ymin=-0.01, ymax=4.5,
    xtick={0,0.5,1,1.5,2,2.5},    ytick={0,1,2,3,4},
    xticklabel style={/pgf/number format/fixed, /pgf/number format/precision=2},
	scaled x ticks=false,
    xtick pos=left,    ytick pos=left,    axis lines=left,
    width = 6cm,    height = 5cm,
    no markers,    x axis line style=-,    y axis line style=-,
    y label style={at={(axis description cs:0.15,.5)},anchor=south},
    font = \small,
    legend style={at={(1,1)},anchor=north west, draw=none}
]
\addplot+[thick,color11] table [x={time}, y={std_sim_0}] {pd_std_df_l=10.dat};
\addplot+[thick,color12] table [x={time}, y={std_sim_1}] {pd_std_df_l=10.dat};
\addplot+[thick,color13] table [x={time}, y={std_sim_2}] {pd_std_df_l=10.dat};
\addplot+[thick,dashed,color14] table [x={time}, y={sigma_0}] {pd_std_df_l=10.dat};
\addplot+[thick,dashed,color15] table [x={time}, y={sigma_1}] {pd_std_df_l=10.dat};
\addplot+[thick,dashed,color16] table [x={time}, y={sigma_2}] {pd_std_df_l=10.dat};
\legend{$\sigma(\hat{\rho}^\circ_1(t))$,$\sigma(\hat{\rho}^\circ_2(t))$,$\sigma(\hat{\rho}^\circ_3(t))$,
			 $\sigma(\rho_1(t))$,$\sigma(\rho_2(t))$,$\sigma(\rho_3(t))$}
\end{axis}
\end{tikzpicture} \\
c) $ \ell = 5$ & d) $ \ell = 10$
\end{tabular}
\caption{Comparison of $\sigma(\hat{\rho}^\circ_i(t)) := \sqrt{\textrm{var}[\rhoh_i^\circ(t)]}$ and $\sigma(\rho_i(t)) := \sqrt{\textrm{var}[\rho_i(t)]}$, the latter estimated using simulation, for $i = 1,2,3$. 
\label{Fig: comparison var rho}}
\end{figure}
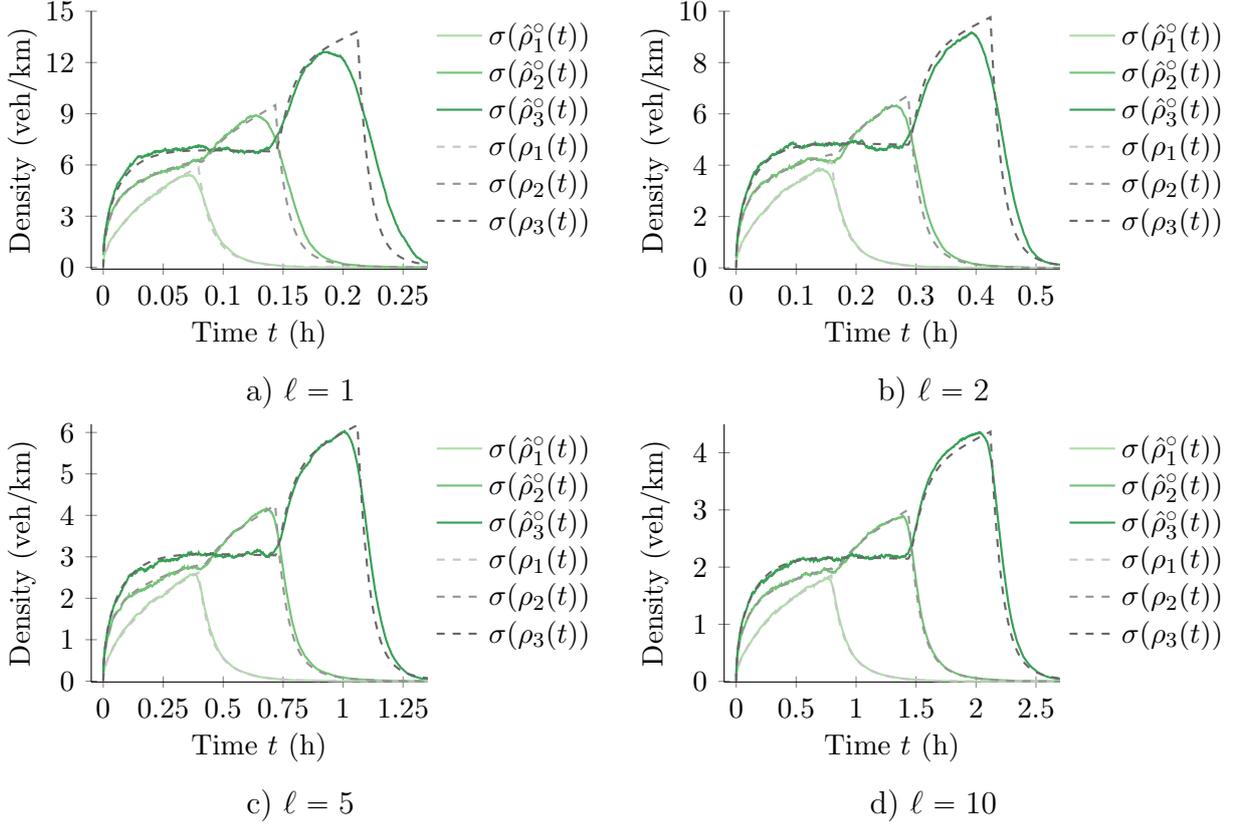

%\begin{figure}
%\begin{tabular}{cc}
%\includegraphics[width = 6.5cm]{Daganzo_Ex1_Comparison_Vars_l=1} &
%\includegraphics[width = 6.5cm]{Daganzo_Ex1_Comparison_Vars_l=2} \\
%a) $l_i = 1$ & b) $l_i = 2$ \\
%\includegraphics[width = 6.5cm]{Daganzo_Ex1_Comparison_Vars_l=5} & 
%\includegraphics[width = 6.5cm]{Daganzo_Ex1_Comparison_Vars_l=10} \\
%a) $l_i = 5$ & b) $l_i = 10$
%\end{tabular}
%
%\end{figure}

\subsection{Numerical Experiments}

In the remainder of this {section,} we illustrate the use of the limit theorems through a series of numerical experiments. Traffic researchers have tried to replicate various phenomena in traffic, and did so quite well with deterministic kinematic wave models, thus recovering the fluid limit. In this section we illustrate the usefulness of our results, in that, in addition to replicating these phenomena with the fluid limit, we are also capable of providing an approximation of the associated vehicle density variance and covariance (where we have covariances in both the spatial and temporal sense). In addition we demonstrate our procedure to approximate {travel-time} distributions.

For our experiments, we take the multi-class MFD from \cite{cb2003} (corresponding with Example~\ref{Example: CB} in \autoref{sec: example MFDs}), with parameters that are used in the numerical experiments of the original paper, with a kilometer taken as unit length. We have $v^f_1 = 108$ km/h, $v^f_2 = 79.2$ km/h, $v^{\rm c} = 61.2$ km/h, $L_1 = 0.0065$ km, $L_2 = 0.0165$ km, $N = 3$ and $\beta = 0.25$. We refer to class~$1$ vehicles as \textit{cars} and to the class~$2$ vehicles as \textit{trucks}.

To obtain the Gaussian approximation of   the mean and variance, we jointly solve the differential form of \eqref{Eqn: fluid limit from, for (ij)} and \eqref{eqn: dVar rho} using a numerical solver. 
% All numerical experiments were performed in Python 3.6.5, using Jupyter. 
To obtain the matrix $\partial Q$, one takes, for each regular point of $Q$, the derivative of the discrete flux-functions. In every non-regular point, one assigns the maximum velocity of a vehicle to the components that are not differentiable, so that $\partial Q$ is bounded by a Lipschitz constant. Additionally, to obtain the travel-time distribution, we use the analog of \eqref{eqn: Cov rho_t, rho_s} for $\hat{Y}$, so that we can evaluate the tail probabilities in \eqref{Eqn: relation Y and T} based on the joint normal approximation of $Y_{i+k,j}(t+x)$ and $Y_{i-1,j}(t)$.

%\begin{figure}
%\begin{tabular}{ccc}
%\hspace{-1.2cm}
%\includegraphics[width = 6cm]{CB_Experiment1_sumoftypes.pdf} &
%\includegraphics[width = 6cm]{CB_Experiment1_type0.pdf} &
%\includegraphics[width = 6cm]{CB_Experiment1_type1.pdf}
%\end{tabular}
%\caption{Solution differential equations for mean and variance, Example \ref{E1}.}
%\label{fig: results experiment 1}
%\end{figure}

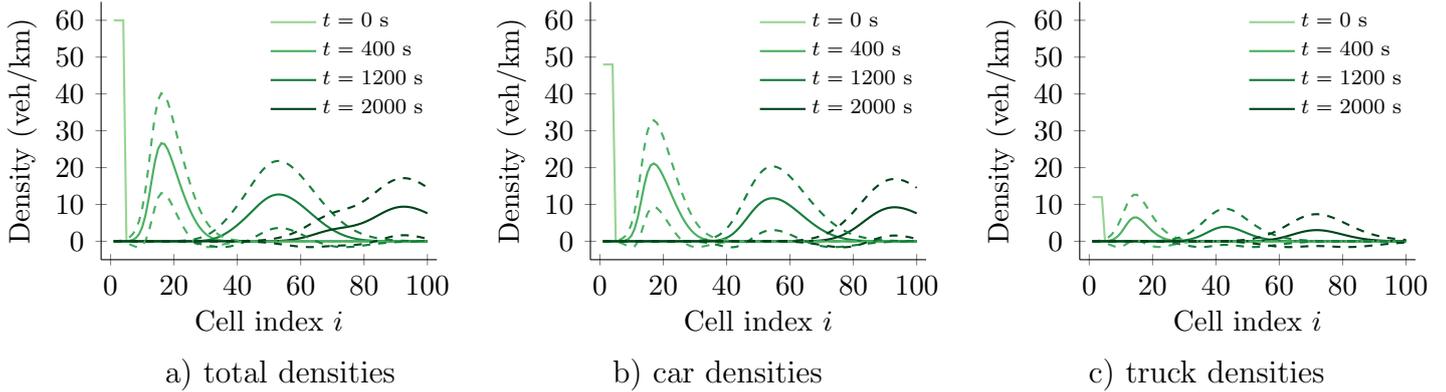
\begin{figure}
\begin{tabular}{ccc}
\hspace{-1.5cm}
%  sum
\begin{tikzpicture}	
\begin{axis}[
    xlabel={Cell index $i$},    ylabel={Density (veh/km)},
    xmin=-3, xmax=103,    ymin=-5, ymax=65,
    xtick={0,20,40,60,80,100},    ytick={0,10,20,30,40,50,60},
    xticklabel style={/pgf/number format/fixed, /pgf/number format/precision=2},
	scaled x ticks=false,
    xtick pos=left,    ytick pos=left,    axis lines=left,
    width = 6cm,    height = 5cm,
    no markers,    x axis line style=-,    y axis line style=-,
    y label style={at={(axis description cs:0.12,.5)},anchor=south},
    font = \small,
    legend style={at={(1,1)},anchor=north east, draw=none, font=\tiny},
    legend cell align={left}
]
\addplot+[thick,color21] table [x={space}, y={rho_t0}] {pd_CB1_sum_df.dat};
\addplot+[thick,color22] table [x={space}, y={rho_t400}] {pd_CB1_sum_df.dat};
\addplot+[thick,color23] table [x={space}, y={rho_t1200}] {pd_CB1_sum_df.dat};
\addplot+[thick,color24] table [x={space}, y={rho_t2000}] {pd_CB1_sum_df.dat};
\addplot+[thick,dashed,color22] table [x={space}, y={rho_t400_varup}] {pd_CB1_sum_df.dat};
\addplot+[thick,dashed,color22] table [x={space}, y={rho_t400_vardown}
]{pd_CB1_sum_df.dat};
\addplot+[thick,dashed,color23] table [x={space}, y={rho_t1200_varup}] {pd_CB1_sum_df.dat};
\addplot+[thick,dashed,color23] table [x={space}, y={rho_t1200_vardown}
]{pd_CB1_sum_df.dat};
\addplot+[thick,dashed,color24] table [x={space}, y={rho_t2000_varup}] {pd_CB1_sum_df.dat};
\addplot+[thick,dashed,color24] table [x={space}, y={rho_t2000_vardown}
]{pd_CB1_sum_df.dat};
\legend{$t=0$ s, $t = 400$ s, $t = 1200$ s, $t=2000$ s}
\end{axis}
\end{tikzpicture} &

\begin{tikzpicture}	
\begin{axis}[
    xlabel={Cell index $i$},    ylabel={Density (veh/km)},
    xmin=-3, xmax=103,    ymin=-5, ymax=65,
    xtick={0,20,40,60,80,100},    ytick={0,10,20,30,40,50,60},
    xticklabel style={/pgf/number format/fixed, /pgf/number format/precision=2},
	scaled x ticks=false,
    xtick pos=left,    ytick pos=left,    axis lines=left,
    width = 6cm,    height = 5cm,
    no markers,    x axis line style=-,    y axis line style=-,
    y label style={at={(axis description cs:0.12,.5)},anchor=south},
    font = \small,
    legend style={at={(1,1)},anchor=north east, draw=none, font=\tiny},
    legend cell align={left}
]
\addplot+[thick,color21] table [x={space}, y={rho_t0}] {pd_CB1_type1_df.dat};
\addplot+[thick,color22] table [x={space}, y={rho_t400}] {pd_CB1_type1_df.dat};
\addplot+[thick,color23] table [x={space}, y={rho_t1200}] {pd_CB1_type1_df.dat};
\addplot+[thick,color24] table [x={space}, y={rho_t2000}] {pd_CB1_type1_df.dat};
\addplot+[thick,dashed,color22] table [x={space}, y={rho_t400_varup}] {pd_CB1_type1_df.dat};
\addplot+[thick,dashed,color22] table [x={space}, y={rho_t400_vardown}
]{pd_CB1_type1_df.dat};
\addplot+[thick,dashed,color23] table [x={space}, y={rho_t1200_varup}] {pd_CB1_type1_df.dat};
\addplot+[thick,dashed,color23] table [x={space}, y={rho_t1200_vardown}
]{pd_CB1_type1_df.dat};
\addplot+[thick,dashed,color24] table [x={space}, y={rho_t2000_varup}] {pd_CB1_type1_df.dat};
\addplot+[thick,dashed,color24] table [x={space}, y={rho_t2000_vardown}
]{pd_CB1_type1_df.dat};
\legend{$t=0$ s, $t = 400$ s, $t = 1200$ s, $t=2000$ s}
\end{axis}
\end{tikzpicture} & 

%  type 2
\begin{tikzpicture}	
\begin{axis}[
    xlabel={Cell index $i$},    ylabel={Density (veh/km)},
    xmin=-3, xmax=103,    ymin=-5, ymax=65,
    xtick={0,20,40,60,80,100},    ytick={0,10,20,30,40,50,60},
    xticklabel style={/pgf/number format/fixed, /pgf/number format/precision=2},
	scaled x ticks=false,
    xtick pos=left,    ytick pos=left,    axis lines=left,
    width = 6cm,    height = 5cm,
    no markers,    x axis line style=-,    y axis line style=-,
    y label style={at={(axis description cs:0.12,.5)},anchor=south},
    font = \small,
    legend style={at={(1,1)},anchor=north east, draw=none, font=\tiny},
    legend cell align={left}
]
\addplot+[thick,color21] table [x={space}, y={rho_t0}] {pd_CB1_type2_df.dat};
\addplot+[thick,color22] table [x={space}, y={rho_t400}] {pd_CB1_type2_df.dat};
\addplot+[thick,color23] table [x={space}, y={rho_t1200}] {pd_CB1_type2_df.dat};
\addplot+[thick,color24] table [x={space}, y={rho_t2000}] {pd_CB1_type2_df.dat};
\addplot+[thick,dashed,color22] table [x={space}, y={rho_t400_varup}] {pd_CB1_type2_df.dat};
\addplot+[thick,dashed,color22] table [x={space}, y={rho_t400_vardown}
]{pd_CB1_type2_df.dat};
\addplot+[thick,dashed,color23] table [x={space}, y={rho_t1200_varup}] {pd_CB1_type2_df.dat};
\addplot+[thick,dashed,color23] table [x={space}, y={rho_t1200_vardown}
]{pd_CB1_type2_df.dat};
\addplot+[thick,dashed,color24] table [x={space}, y={rho_t2000_varup}] {pd_CB1_type2_df.dat};
\addplot+[thick,dashed,color24] table [x={space}, y={rho_t2000_vardown}
]{pd_CB1_type2_df.dat};
\legend{$t=0$ s, $t = 400$ s, $t = 1200$ s, $t=2000$ s}
\end{axis}
\end{tikzpicture} \\
a) total densities & b) car densities & c) truck densities
\end{tabular}
\caption{Solution differential equations for $\rhob(t)$ and $\textrm{var}[\rhoh(t)]$, Example \ref{E1} - forward propagation.}
\label{fig: results experiment 1}
\end{figure}

\begin{example}[Forward Propagation]\label{E1}{\em

In the first {experiment, }we use the results of \autoref{Thm: Fluid limit cells} and \autoref{Thm: diffusion cells} to study forward propagation. In the experiment, the first 5 cells initially have a density of 60 veh/km of which 20\% are trucks, and we let the system evolve over a time interval of 2000 s, with $\Delta t = 2$ s. The experiment is taken from \cite[Fig.\ 3]{cb2003}, where $d = 100$, but with $\ell_i = 600$ m for each $i$. Under these parameters, the Godunov scheme is stable (cf. \cite[Section 10.6 and Eqn.\  (13.11)]{leveque1992}).

\autoref{fig: results experiment 1} shows the results of our experiment, with from left to right the total vehicle densities, the car densities, and the truck densities, where the dotted lines show the corresponding $95\%$ confidence intervals. Note that the trucks propagate slower on average than the cars, and that they dissipate slower from the initial density; they are more clumped. }
%Moreover, we see that the standard deviation scales with the amount of density in a cell, as expected. \textcolor{red}{of wat willen we nog meer opmerkingen in deze plaatjes?}}
\end{example}

\begin{example}[Travel Times, for the setting of \autoref{E1}]\label{E2}{\em 

To illustrate our approximation of the travel-time distribution, we consider the time that it takes a vehicle in cell~$10$ at time $t = 200$ s, to reach cell~$50$. To be precise, we evaluate \eqref{Eqn: relation Y and T} with $Y$ replaced by $\hat{Y}$, $t=200$ s, $i=10$, $k=39$ and $x \in \{0,2,4,\ldots,2000\}$, in the setting of the above forward-propagation experiment. 

First, in \autoref{Fig: evolution hat{Y}}, we have plotted the mean evolution of $\hat{Y}$ with its associated $95 \%$ confidence interval, at different points in time. The vertical gray lines are associated with the position of cell~$10$ and cell~$50$, whereas the horizontal gray lines correspond with the peak of  the car and truck density in cell~$10$ at time $t=200$ s. From panel (b), we see that at time $t=200$~s, $\bar{Y}_{10,1} \approx 93$ and $\bar{Y}_{10,2} \approx 17$. From panels (c)--(f) we observe that the cars arrive at cell 50 (roughly) between $t =1050$ s and $t = 1150$ s, and the trucks (roughly) between $t=1300$ s and $t = 1400$~s.

In \autoref{Fig: Travel Time Distribution and Density} we have plotted both the approximated distribution $\hat{F}_{T_j}(\cdot)$ and density $\hat{f}_{T_j}(\cdot)$ of the travel time of {type-$j$} vehicles ($j=1,2$), as derived from $\hat{Y}$ in \autoref{Fig: evolution hat{Y}}. 
%The distribution is in agreement with the evolution of $\hat{Y}$ and can thus be approximated. 
We remark that one can empirically show that the travel-time distribution for cars is close to normal. However, regarding the travel-time distribution for trucks a QQ-plot reveals a distribution with significantly lighter tails.
In general, from (\ref{Eqn: relation Y and T}) we observe that the asymptotic normality of the counting processes $Y_{i,j}(\cdot)$ does not imply asymptotic normality of the travel times. 

\iffalse
the \red{travel-time} distribution is not normal however, since all the probabilities in \autoref{Eqn: relation Y and T} are tail-probabilities of normals with different (time-dependent) parameters \textcolor{red}{Deze laatste zin moet, vind ik, aan het einde van sectie 5 uitgelegd worden. Dan kunnen we hier daarnaar verwijzen en opmerken dat de parameters hier toevallig over (ongeveer) gelijk zijn, waardoor het bijna normaal verdeeld is}\fi
%
%\begin{figure}
%\begin{tiny}
%\begin{tabular}{ccc}
%\includegraphics[width = 5cm]{Y_process_t=0s} & 
%\includegraphics[width = 5cm]{Y_process_t=200s} & 
%\includegraphics[width = 5cm]{Y_process_t=1050s} \\
%a) $t = 0$ s & b) $t = 200$ s & c) $t = 1050$ s \\
%\includegraphics[width = 5cm]{Y_process_t=1100s} &
%\includegraphics[width = 5cm]{Y_process_t=1150s} & 
%\includegraphics[width = 5cm]{Y_process_t=1200s} \\ 
%d) $t = 1100$ s & e) $t = 1150$ s & f) $t = 1200$ s \\
%\includegraphics[width = 5cm]{Y_process_t=1350s} &
%\includegraphics[width = 5cm]{Y_process_t=1400s} &
%\includegraphics[width = 5cm]{Y_process_t=1450s} \\
% g) $t = 1350$ s & h) $t = 1400$ s & i) $t = 1450$ s
%\end{tabular}
%\end{tiny}
%\caption{Evolution of $\bar{Y}$, with error bars given by $\textrm{var} \hat{Y}$; the blue line representing $\bar Y_{i,1}(t)$ and the green line representing $\bar Y_{i,2}(t)$.}
%\label{Fig: evolution hat{Y}}
%\end{figure}

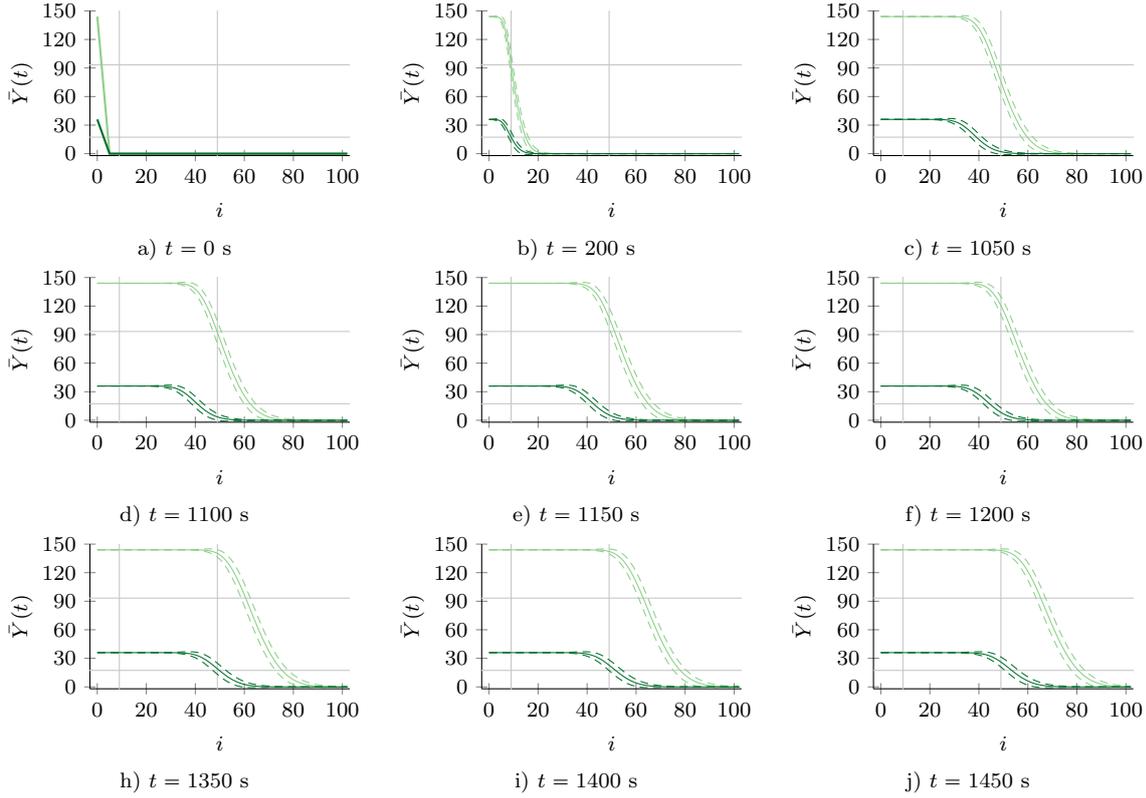
\begin{figure}
\begin{tiny}
\begin{tabular}{ccc}
\begin{tikzpicture}	
\begin{axis}[
    xlabel={$i$},    ylabel={$\bar Y(t)$},
    xmin=-3, xmax=103,    ymin=-2, ymax=150,
    xtick={0,20,40,60,80,100},    ytick={0,30,60,90,120,150},
    xticklabel style={/pgf/number format/fixed, /pgf/number format/precision=2},
	scaled x ticks=false,
    xtick pos=left,    ytick pos=left,    axis lines=left,
    width = 5cm,    height = 3.5cm,
    no markers,    x axis line style=-,    y axis line style=-,
    y label style={at={(axis description cs:0.17,.5)},anchor=south},
    font = \tiny,
]
\addplot[thin, domain=-10:110,mark=none, color14, samples=2] {93.29};
\addplot[thin, domain=-10:110,mark=none, color14, samples=2] {17.32};
\addplot+[thin,solid,mark=none, color14] coordinates {(9, -10) (9, 210)};
\addplot+[thin,solid,mark=none, color14] coordinates {(49, -10) (49, 210)};
\addplot+[thick,solid,color32] table [x={space}, y={Y1_t0}] {pd_Y_df.dat};
\addplot+[thick,solid,color35] table [x={space}, y={Y2_t0}] {pd_Y_df.dat};
\end{axis}
\end{tikzpicture} &
\begin{tikzpicture}	
\begin{axis}[
    xlabel={$i$},    ylabel={$\bar Y(t)$},
    xmin=-3, xmax=103,    ymin=-2, ymax=150,
    xtick={0,20,40,60,80,100},    ytick={0,30,60,90,120,150},
    xticklabel style={/pgf/number format/fixed, /pgf/number format/precision=2},
	scaled x ticks=false,
    xtick pos=left,    ytick pos=left,    axis lines=left,
    width = 5cm,    height = 3.5cm,
    no markers,    x axis line style=-,    y axis line style=-,
    y label style={at={(axis description cs:0.17,.5)}, anchor=south},
    font = \tiny,
]
\addplot[thin, domain=-10:110,mark=none, color14, samples=2] {93.29};
\addplot[thin, domain=-10:110,mark=none, color14, samples=2] {17.32};
\addplot+[thin,solid,mark=none, color14] coordinates {(9, -10) (9, 210)};
\addplot+[thin,solid,mark=none, color14] coordinates {(49, -10) (49, 210)};
\addplot+[solid,color32] table [x={space}, y={Y1_t200}] {pd_Y_df.dat};
\addplot+[densely dashed,color32] table [x={space}, y={Y1_t200_errorup}] {pd_Y_df.dat};
\addplot+[densely dashed,color32] table [x={space}, y={Y1_t200_errordown}] {pd_Y_df.dat};
\addplot+[solid,color35] table [x={space}, y={Y2_t200}] {pd_Y_df.dat};
\addplot+[densely dashed,color35] table [x={space}, y={Y2_t200_errorup}] {pd_Y_df.dat};
\addplot+[densely dashed,color35] table [x={space}, y={Y2_t200_errordown}] {pd_Y_df.dat};
\end{axis}
\end{tikzpicture} & 
\begin{tikzpicture}	
\begin{axis}[
    xlabel={$i$},    ylabel={$\bar Y(t)$},
    xmin=-3, xmax=103,    ymin=-2, ymax=150,
    xtick={0,20,40,60,80,100},    ytick={0,30,60,90,120,150},
    xticklabel style={/pgf/number format/fixed, /pgf/number format/precision=2},
	scaled x ticks=false,
    xtick pos=left,    ytick pos=left,    axis lines=left,
    width = 5cm,    height = 3.5cm,
    no markers,    x axis line style=-,    y axis line style=-,
    y label style={at={(axis description cs:0.17,.5)}, anchor=south},
    font = \tiny,
]
\addplot[thin, domain=-10:110,mark=none, color14, samples=2] {93.29};
\addplot[thin, domain=-10:110,mark=none, color14, samples=2] {17.32};
\addplot+[thin,solid,mark=none, color14] coordinates {(9, -10) (9, 210)};
\addplot+[thin,solid,mark=none, color14] coordinates {(49, -10) (49, 210)};
\addplot+[solid,color32] table [x={space}, y={Y1_t1050}] {pd_Y_df.dat};
\addplot+[densely dashed,color32] table [x={space}, y={Y1_t1050_errorup}] {pd_Y_df.dat};
\addplot+[densely dashed,color32] table [x={space}, y={Y1_t1050_errordown}] {pd_Y_df.dat};
\addplot+[solid,color35] table [x={space}, y={Y2_t1050}] {pd_Y_df.dat};
\addplot+[densely dashed,color35] table [x={space}, y={Y2_t1050_errorup}] {pd_Y_df.dat};
\addplot+[densely dashed,color35] table [x={space}, y={Y2_t1050_errordown}] {pd_Y_df.dat};
\end{axis}
\end{tikzpicture} \\
a) $t = 0$ s & b) $t = 200$ s & c) $t = 1050$ s \\
\begin{tikzpicture}	
\begin{axis}[
    xlabel={$i$},    ylabel={$\bar Y(t)$},
    xmin=-3, xmax=103,    ymin=-2, ymax=150,
    xtick={0,20,40,60,80,100},    ytick={0,30,60,90,120,150},
    xticklabel style={/pgf/number format/fixed, /pgf/number format/precision=2},
	scaled x ticks=false,
    xtick pos=left,    ytick pos=left,    axis lines=left,
    width = 5cm,    height = 3.5cm,
    no markers,    x axis line style=-,    y axis line style=-,
    y label style={at={(axis description cs:0.17,.5)}, anchor=south},
    font = \tiny,
]
\addplot[thin, domain=-10:110,mark=none, color14, samples=2] {93.29};
\addplot[thin, domain=-10:110,mark=none, color14, samples=2] {17.32};
\addplot+[thin,solid,mark=none, color14] coordinates {(9, -10) (9, 210)};
\addplot+[thin,solid,mark=none, color14] coordinates {(49, -10) (49, 210)};
\addplot+[solid,color32] table [x={space}, y={Y1_t1100}] {pd_Y_df.dat};
\addplot+[densely dashed,color32] table [x={space}, y={Y1_t1100_errorup}] {pd_Y_df.dat};
\addplot+[densely dashed,color32] table [x={space}, y={Y1_t1100_errordown}] {pd_Y_df.dat};
\addplot+[solid,color35] table [x={space}, y={Y2_t1100}] {pd_Y_df.dat};
\addplot+[densely dashed,color35] table [x={space}, y={Y2_t1100_errorup}] {pd_Y_df.dat};
\addplot+[densely dashed,color35] table [x={space}, y={Y2_t1100_errordown}] {pd_Y_df.dat};
\end{axis}
\end{tikzpicture} &
\begin{tikzpicture}	
\begin{axis}[
    xlabel={$i$},    ylabel={$\bar Y(t)$},
    xmin=-3, xmax=103,    ymin=-2, ymax=150,
    xtick={0,20,40,60,80,100},    ytick={0,30,60,90,120,150},
    xticklabel style={/pgf/number format/fixed, /pgf/number format/precision=2},
	scaled x ticks=false,
    xtick pos=left,    ytick pos=left,    axis lines=left,
    width = 5cm,    height = 3.5cm,
    no markers,    x axis line style=-,    y axis line style=-,
    y label style={at={(axis description cs:0.17,.5)}, anchor=south},
    font = \tiny,
]
\addplot[thin, domain=-10:110,mark=none, color14, samples=2] {93.29};
\addplot[thin, domain=-10:110,mark=none, color14, samples=2] {17.32};
\addplot+[thin,solid,mark=none, color14] coordinates {(9, -10) (9, 210)};
\addplot+[thin,solid,mark=none, color14] coordinates {(49, -10) (49, 210)};
\addplot+[solid,color32] table [x={space}, y={Y1_t1150}] {pd_Y_df.dat};
\addplot+[densely dashed,color32] table [x={space}, y={Y1_t1150_errorup}] {pd_Y_df.dat};
\addplot+[densely dashed,color32] table [x={space}, y={Y1_t1150_errordown}] {pd_Y_df.dat};
\addplot+[solid,color35] table [x={space}, y={Y2_t1150}] {pd_Y_df.dat};
\addplot+[densely dashed,color35] table [x={space}, y={Y2_t1150_errorup}] {pd_Y_df.dat};
\addplot+[densely dashed,color35] table [x={space}, y={Y2_t1150_errordown}] {pd_Y_df.dat};
\end{axis}
\end{tikzpicture} &
\begin{tikzpicture}	
\begin{axis}[
    xlabel={$i$},    ylabel={$\bar Y(t)$},
    xmin=-3, xmax=103,    ymin=-2, ymax=150,
    xtick={0,20,40,60,80,100},    ytick={0,30,60,90,120,150},
    xticklabel style={/pgf/number format/fixed, /pgf/number format/precision=2},
	scaled x ticks=false,
    xtick pos=left,    ytick pos=left,    axis lines=left,
    width = 5cm,    height = 3.5cm,
    no markers,    x axis line style=-,    y axis line style=-,
    y label style={at={(axis description cs:0.17,.5)}, anchor=south},
    font = \tiny,
]
\addplot[thin, domain=-10:110,mark=none, color14, samples=2] {93.29};
\addplot[thin, domain=-10:110,mark=none, color14, samples=2] {17.32};
\addplot+[thin,solid,mark=none, color14] coordinates {(9, -10) (9, 210)};
\addplot+[thin,solid,mark=none, color14] coordinates {(49, -10) (49, 210)};
\addplot+[solid,color32] table [x={space}, y={Y1_t1200}] {pd_Y_df.dat};
\addplot+[densely dashed,color32] table [x={space}, y={Y1_t1200_errorup}] {pd_Y_df.dat};
\addplot+[densely dashed,color32] table [x={space}, y={Y1_t1200_errordown}] {pd_Y_df.dat};
\addplot+[solid,color35] table [x={space}, y={Y2_t1200}] {pd_Y_df.dat};
\addplot+[densely dashed,color35] table [x={space}, y={Y2_t1200_errorup}] {pd_Y_df.dat};
\addplot+[densely dashed,color35] table [x={space}, y={Y2_t1200_errordown}] {pd_Y_df.dat};
\end{axis}
\end{tikzpicture} \\
d) $t = 1100$ s & e) $t = 1150$ s & f) $t = 1200$ s \\
\begin{tikzpicture}	
\begin{axis}[
    xlabel={$i$},    ylabel={$\bar Y(t)$},
    xmin=-3, xmax=103,    ymin=-2, ymax=150,
    xtick={0,20,40,60,80,100},    ytick={0,30,60,90,120,150},
    xticklabel style={/pgf/number format/fixed, /pgf/number format/precision=2},
	scaled x ticks=false,
    xtick pos=left,    ytick pos=left,    axis lines=left,
    width = 5cm,    height = 3.5cm,
    no markers,    x axis line style=-,    y axis line style=-,
    y label style={at={(axis description cs:0.17,.5)}, anchor=south},
    font = \tiny,
]
\addplot[thin, domain=-10:110,mark=none, color14, samples=2] {93.29};
\addplot[thin, domain=-10:110,mark=none, color14, samples=2] {17.32};
\addplot+[thin,solid,mark=none, color14] coordinates {(9, -10) (9, 210)};
\addplot+[thin,solid,mark=none, color14] coordinates {(49, -10) (49, 210)};
\addplot+[solid,color32] table [x={space}, y={Y1_t1350}] {pd_Y_df.dat};
\addplot+[densely dashed,color32] table [x={space}, y={Y1_t1350_errorup}] {pd_Y_df.dat};
\addplot+[densely dashed,color32] table [x={space}, y={Y1_t1350_errordown}] {pd_Y_df.dat};
\addplot+[solid,color35] table [x={space}, y={Y2_t1350}] {pd_Y_df.dat};
\addplot+[densely dashed,color35] table [x={space}, y={Y2_t1350_errorup}] {pd_Y_df.dat};
\addplot+[densely dashed,color35] table [x={space}, y={Y2_t1350_errordown}] {pd_Y_df.dat};
\end{axis}
\end{tikzpicture} &
\begin{tikzpicture}	
\begin{axis}[
    xlabel={$i$},    ylabel={$\bar Y(t)$},
    xmin=-3, xmax=103,    ymin=-2, ymax=150,
    xtick={0,20,40,60,80,100},    ytick={0,30,60,90,120,150},
    xticklabel style={/pgf/number format/fixed, /pgf/number format/precision=2},
	scaled x ticks=false,
    xtick pos=left,    ytick pos=left,    axis lines=left,
    width = 5cm,    height = 3.5cm,
    no markers,    x axis line style=-,    y axis line style=-,
    y label style={at={(axis description cs:0.17,.5)}, anchor=south},
    font = \tiny,
]
\addplot[thin, domain=-10:110,mark=none, color14, samples=2] {93.29};
\addplot[thin, domain=-10:110,mark=none, color14, samples=2] {17.32};
\addplot+[thin,solid,mark=none, color14] coordinates {(9, -10) (9, 210)};
\addplot+[thin,solid,mark=none, color14] coordinates {(49, -10) (49, 210)};
\addplot+[solid,color32] table [x={space}, y={Y1_t1400}] {pd_Y_df.dat};
\addplot+[densely dashed,color32] table [x={space}, y={Y1_t1400_errorup}] {pd_Y_df.dat};
\addplot+[densely dashed,color32] table [x={space}, y={Y1_t1400_errordown}] {pd_Y_df.dat};
\addplot+[solid,color35] table [x={space}, y={Y2_t1400}] {pd_Y_df.dat};
\addplot+[densely dashed,color35] table [x={space}, y={Y2_t1400_errorup}] {pd_Y_df.dat};
\addplot+[densely dashed,color35] table [x={space}, y={Y2_t1400_errordown}] {pd_Y_df.dat};
\end{axis}
\end{tikzpicture} &
\begin{tikzpicture}	
\begin{axis}[
    xlabel={$i$},    ylabel={$\bar Y(t)$},
    xmin=-3, xmax=103,    ymin=-2, ymax=150,
    xtick={0,20,40,60,80,100},    ytick={0,30,60,90,120,150},
    xticklabel style={/pgf/number format/fixed, /pgf/number format/precision=2},
	scaled x ticks=false,
    xtick pos=left,    ytick pos=left,    axis lines=left,
    width = 5cm,    height = 3.5cm,
    no markers,    x axis line style=-,    y axis line style=-,
    y label style={at={(axis description cs:0.17,.5)}, anchor=south},
    font = \tiny,
]
\addplot[thin, domain=-10:110,mark=none, color14, samples=2] {93.29};
\addplot[thin, domain=-10:110,mark=none, color14, samples=2] {17.32};
\addplot+[thin,solid,mark=none, color14] coordinates {(9, -10) (9, 210)};
\addplot+[thin,solid,mark=none, color14] coordinates {(49, -10) (49, 210)};
\addplot+[solid,color32] table [x={space}, y={Y1_t1450}] {pd_Y_df.dat};
\addplot+[densely dashed,color32] table [x={space}, y={Y1_t1450_errorup}] {pd_Y_df.dat};
\addplot+[densely dashed,color32] table [x={space}, y={Y1_t1450_errordown}] {pd_Y_df.dat};
\addplot+[solid,color35] table [x={space}, y={Y2_t1450}] {pd_Y_df.dat};
\addplot+[densely dashed,color35] table [x={space}, y={Y2_t1450_errorup}] {pd_Y_df.dat};
\addplot+[densely dashed,color35] table [x={space}, y={Y2_t1450_errordown}] {pd_Y_df.dat};
\end{axis}
\end{tikzpicture} \\
h) $t = 1350$ s & i) $t = 1400$ s & j) $t = 1450$ s \\
\end{tabular}
\end{tiny}
\caption{Evolution of $\bar{Y}$, with dashed line marking the 95\% confidence interval, given by $\textrm{var}[\hat{Y}(t)]$; the light green line representing $\bar Y_{i,1}(t)$ and the dark green line $\bar Y_{i,2}(t)$.}
\label{Fig: evolution hat{Y}}
\end{figure}

}\end{example}

\begin{example}[Backward Moving Jam]\label{E3}{\em

This experiment illustrates our results in the case of a backward moving traffic jam. This phenomenon in traffic is seen when there is a large density on a given road segment, with the front dissipating and the back absorbing arriving cars.The jam  thus seems to be moving backward on the road; detailed background on moving jams can be found in{, e.g.,~}\cite{kern}. 

In our {numerical experiments,} we set $d=20$, with $\ell_i = 1$~km for every $i$, with initial density of 300~veh/km in cells 8 up to (and including) 12, and an initial density of 88~veh/km in the other cells. Moreover, we set $\lambda_1 = 4800$~veh/h, $\lambda_2 = 960$~veh/h and $\nu = 1.2$, and we simulate for 1500 seconds with $\Delta t = 1$~s.

\autoref{fig: results experiment 2} shows the solution of the differential equations for $\rhob^\circ(\cdot)$, summed over the vehicle types, with the error bars indicating the standard errors corresponding to each cell; the $i$-th panel corresponds with the situation after $300(i-1)$ seconds, for $i\in\{1,\ldots,6\}$. We conclude from the pictures that the approximation succeeds in replicating the phenomenon of the jam moving backward. 

It is also observed that the vehicle densities in the cells in front of the traffic jam have a particularly large variability. This is due to the fact that vehicles approach the jam relatively fast, resulting in a high in-flow rate for the cells in front of the jam.
%, which adds to the variability. 
Note that this behavior is consistent with a large scatter in the MFD at the onset of congestion. The cells behind the jam (with index between 12 and 20, that is) also have increased in-flow (and later out-flow), due to the dissipation of the traffic jam. As a consequence the variability (in terms of  the standard error) of the vehicle density increases, until the flow stabilizes, from which point on the standard error goes down again.
}\end{example}

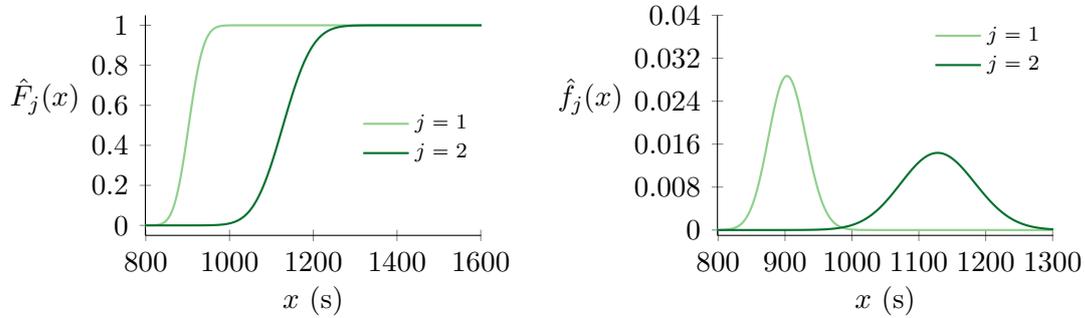
\begin{figure}
\begin{tabular}{cc}
\begin{tikzpicture}	
\begin{axis}[
    xlabel={$x$ (s)},    ylabel={$\hat{F}_{j}(x)$},
    xmin=799, xmax=1601,    ymin=-0.05, ymax=1.05,
    xtick={800,1000,1200,1400,1600},    ytick={0.0,0.2,0.4,0.6,0.8,1.0},
    xticklabel style={/pgf/number format/.cd,fixed,set thousands separator={}},
	scaled x ticks=false,
    xtick pos=left,    ytick pos=left,    axis lines=left,
    width = 6cm,    height = 4.5cm,
    no markers,    x axis line style=-,    y axis line style=-,
    y label style={at={(axis description cs:-0.02,.5)}, rotate = 270, anchor=south},
    font = \small,
    legend style={at={(1,0.6)},anchor=north east, draw=none, font=\tiny},
    legend cell align={left}
]
\addplot+[thick,color32] table [x={time}, y={F1}] {pd_TT_F_df.dat};
\addplot+[thick,color35] table [x={time}, y={F2}] {pd_TT_F_df.dat};
\legend{$j=1$,$j=2$}
\end{axis}
\end{tikzpicture} &
\begin{tikzpicture}	
\begin{axis}[
    xlabel={$x$ (s)},    ylabel={$\hat{f}_{j}(x)$},
    xmin=799, xmax=1301,    ymin=-0.001, ymax=0.04,
    xtick={800,900,1000,1100,1200,1300},    ytick={0.0,0.008,0.016,0.024,0.032,0.040},
    xticklabel style={/pgf/number format/.cd,fixed,set thousands separator={}},
    yticklabel style={/pgf/number format/fixed, /pgf/number format/precision=4},
	scaled x ticks=false,
	scaled y ticks=false,
    xtick pos=left,    ytick pos=left,    axis lines=left,
    width = 6cm,    height = 4.5cm,
    no markers,    x axis line style=-,    y axis line style=-,
    y label style={at={(axis description cs:-0.1,.5)}, rotate = 270, anchor=south},
    font = \small,
    legend style={at={(1,1)},anchor=north east, draw=none, font=\tiny},
    legend cell align={left}
]
\addplot+[thick,color32] table [x={time}, y={f1}] {pd_TT_ff_df.dat};
\addplot+[thick,color35] table [x={time}, y={f2}] {pd_TT_ff_df.dat};
\legend{$j=1$,$j=2$}
\end{axis}
\end{tikzpicture} 
\end{tabular}
\caption{Approximated cumulative distribution function (where $\hat F_j(x):={\mathbb P}(T_{10,49,j}(200)<x)$) and the corresponding density ($\hat{f}_{T_j}(x)$) of travel times for types $j = 1,2$, from cell~$10$ to cell~$50$, for cars leaving cell 10 at time $200$ s.}
\label{Fig: Travel Time Distribution and Density}
\end{figure}

\begin{example}[Shocks]\label{E4}{\em 

The  final experiment illustrates how our results replicate shock waves, an important topic in the theory of conservation laws \cite[Section 7]{leveque1992}. In this {case, }a fast small density reaches a larger and slower downstream density. 
%\footnote{\textcolor{red}{\tt \tiny MM: ik begreep niets van de zin over fysisch onmogelijke situaties. Ik heb hem maar weggelaten.}}
%In the theory of conservation laws, this phenomenon is of particular importance, as it can create physically impossible solutions, which one can prevent by requiring certain conditions \cite[Section 7]{leveque1992}. 
In traffic flow theory, such shocks have been studied since \cite{rich1956}.
\iffalse
Also, the behavior of traffic over shocks, which is related to the hyperbolic and anisotropic properties of traffic\textcolor{red}{WK13 - anisotropy}, is one of the major discussion points for whether or not to use higher order conservation laws (see \textcolor{red}{REF Daganzo (requiem)} and \textcolor{red}{REF ressurection paper}), which on its own illustrates the importance of this phenomenon. \textcolor{blue}{Deze refs nog niet gelezen en daarom moet de zin nog aangepast, maar ik hou heb nu even als proxy voor wat er ongeveer moet staan.}\fi

In our {experiment,} we take $d=40$ with $\ell_i = 0.6$~km for each $i$. We simulate for $500$~s with $\Delta t = 1$~s. Initially, cells 20 up to 24 have a density of $350$~veh/km, and cells 5 up to 9 have a density of $200$~veh/km, with every other cell being empty initially. 
\autoref{fig: results experiment 3} depicts the evolution of the densities. The small density catches up with the large density, and one can see how the densities merge at the back of the jamming density, while the front of the jamming density dissipates. }\end{example}

\begin{remark}{\em 
Importantly, it is now straightforward to apply  our results in a control context. For example, suppose that one wants to prevent congestion in cell $i$. By lowering the maximum speed in cells $1,\ldots,i-1$ we alter the shape of the MFD in these cells. With our {methods} we can assess to what extent this decreases the vehicle density in cell $i$. Moreover, the corresponding variance allows us to determine the maximum velocity to be imposed  in cells $1,\ldots,i-1$ 
to make sure   that the vehicle density of cell $i$ remains below some predefined acceptable level with probability of at most $\alpha\in(0,1)$.}
\end{remark}

\section{Extensions, Generalizations, Ramifications}
\label{EGR}

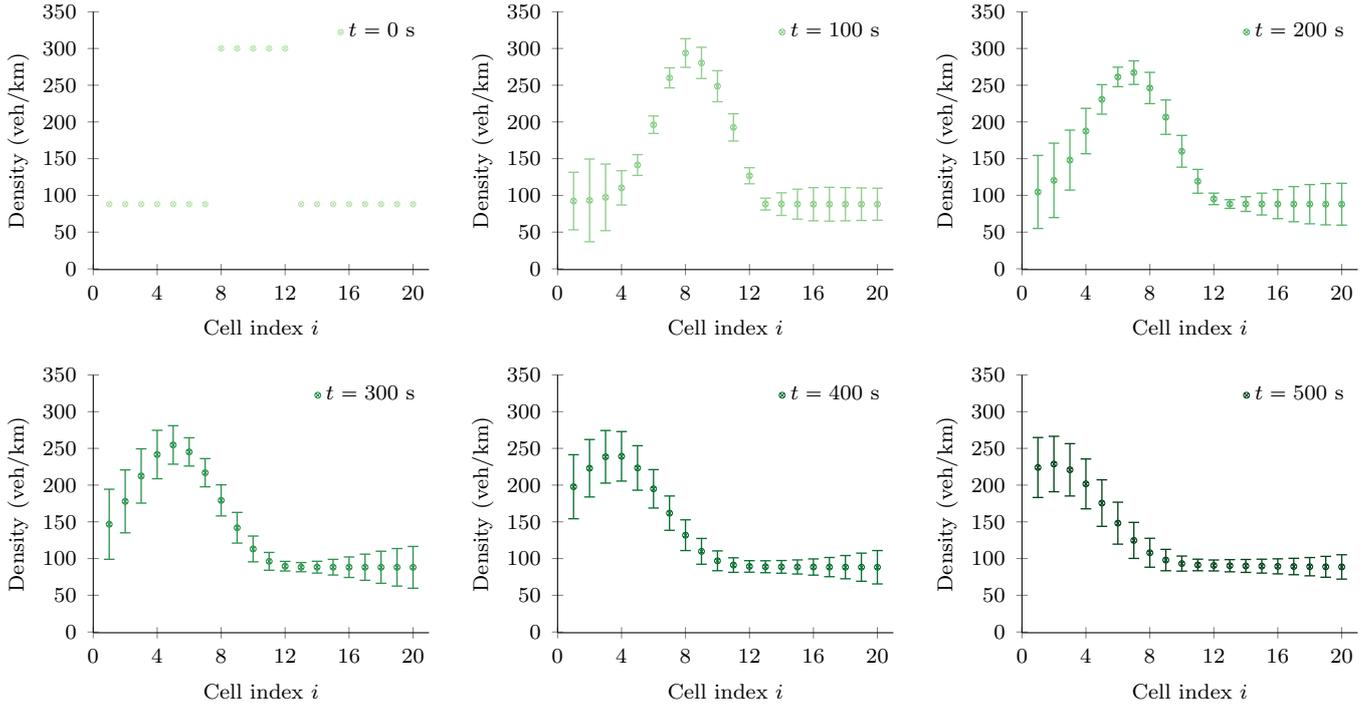
\begin{figure}
\begin{tabular}{ccc}
\hspace{-1.5cm}
\begin{tikzpicture}	
\begin{axis}[
    xlabel={Cell index $i$},    ylabel={Density (veh/km)},
    xmin=0, xmax=21,    ymin=-1, ymax=350,
    xtick={0,4,8,12,16,20},    ytick={0,50,100,150,200,250,300,350},
    xticklabel style={/pgf/number format/fixed, /pgf/number format/precision=2},
	scaled x ticks=false,
    xtick pos=left,    ytick pos=left,    axis lines=left,
    width = 6cm,    height = 5cm,
    mark size=1.0pt,
    only marks,    x axis line style=-,    y axis line style=-,
    y label style={at={(axis description cs:0.12,.5)},anchor=south},
    font = \tiny,
    legend style={at={(1,1)},anchor=north east, draw=none, font=\tiny},
    legend cell align={left}
]
\addplot+[color31,mark=otimes] table [x={space}, y={rho_t0}] {pd_CB2_sum_df.dat};
\legend{$t=0$ s}
\end{axis}
\end{tikzpicture}  &
\begin{tikzpicture}	
\begin{axis}[
    xlabel={Cell index $i$},    ylabel={Density (veh/km)},
    xmin=0, xmax=21,    ymin=-1, ymax=350,
    xtick={0,4,8,12,16,20},    ytick={0,50,100,150,200,250,300,350},
    xticklabel style={/pgf/number format/fixed, /pgf/number format/precision=2},
	scaled x ticks=false,
    xtick pos=left,    ytick pos=left,    axis lines=left,
    width = 6cm,    height = 5cm,
    mark size=1.1pt,
    only marks,    x axis line style=-,    y axis line style=-,
    y label style={at={(axis description cs:0.12,.5)},anchor=south},
    font = \tiny,
    legend style={at={(1,1)},anchor=north east, draw=none, font=\tiny},
    legend cell align={left}
]
\addplot+[color32,mark=otimes,error bars/.cd,y explicit,y dir=both,error bar style={line width=0.5pt},error mark options={
      rotate=90,
      mark size=2pt,
      line width=0.5pt
    }] table [x={space}, y={rho_t300}, y error = {rho_t300_error}] {pd_CB2_sum_df.dat};
\legend{$t=100$ s}
\end{axis}
\end{tikzpicture} &
\begin{tikzpicture}	
\begin{axis}[
    xlabel={Cell index $i$},    ylabel={Density (veh/km)},
    xmin=0, xmax=21,    ymin=-1, ymax=350,
    xtick={0,4,8,12,16,20},    ytick={0,50,100,150,200,250,300,350},
    xticklabel style={/pgf/number format/fixed, /pgf/number format/precision=2},
	scaled x ticks=false,
    xtick pos=left,    ytick pos=left,    axis lines=left,
    width = 6cm,    height = 5cm,
    mark size=1.1pt,
    only marks,    x axis line style=-,    y axis line style=-,
    y label style={at={(axis description cs:0.12,.5)},anchor=south},
    font = \tiny,
    legend style={at={(1,1)},anchor=north east, draw=none, font=\tiny},
    legend cell align={left}
]
\addplot+[color33,mark=otimes,error bars/.cd,y explicit,y dir=both,error bar style={line width=0.5pt},error mark options={
      rotate=90,
      mark size=2pt,
      line width=0.5pt
    }] table [x={space}, y={rho_t600}, y error = {rho_t600_error}] {pd_CB2_sum_df.dat};
\legend{$t=200$ s}
\end{axis}
\end{tikzpicture} \\
\hspace{-1.5cm} \begin{tikzpicture}	
\begin{axis}[
    xlabel={Cell index $i$},    ylabel={Density (veh/km)},
    xmin=0, xmax=21,    ymin=-1, ymax=350,
    xtick={0,4,8,12,16,20},    ytick={0,50,100,150,200,250,300,350},
    xticklabel style={/pgf/number format/fixed, /pgf/number format/precision=2},
	scaled x ticks=false,
    xtick pos=left,    ytick pos=left,    axis lines=left,
    width = 6cm,    height = 5cm,
    mark size=1.1pt,
    only marks,    x axis line style=-,    y axis line style=-,
    y label style={at={(axis description cs:0.12,.5)},anchor=south},
    font = \tiny,
    legend style={at={(1,1)},anchor=north east, draw=none, font=\tiny},
    legend cell align={left}
]
\addplot+[color34,mark=otimes,error bars/.cd,y explicit,y dir=both,error bar style={line width=0.5pt},error mark options={
      rotate=90,
      mark size=2pt,
      line width=0.5pt
    }] table [x={space}, y={rho_t900}, y error = {rho_t900_error}] {pd_CB2_sum_df.dat};
\legend{$t=300$ s}
\end{axis}
\end{tikzpicture} &
\begin{tikzpicture}	
\begin{axis}[
    xlabel={Cell index $i$},    ylabel={Density (veh/km)},
    xmin=0, xmax=21,    ymin=-1, ymax=350,
    xtick={0,4,8,12,16,20},    ytick={0,50,100,150,200,250,300,350},
    xticklabel style={/pgf/number format/fixed, /pgf/number format/precision=2},
	scaled x ticks=false,
    xtick pos=left,    ytick pos=left,    axis lines=left,
    width = 6cm,    height = 5cm,
    mark size=1.1pt,
    only marks,    x axis line style=-,    y axis line style=-,
    y label style={at={(axis description cs:0.12,.5)},anchor=south},
    font = \tiny,
    legend style={at={(1,1)},anchor=north east, draw=none, font=\tiny},
    legend cell align={left}
]
\addplot+[color35,mark=otimes,error bars/.cd,y explicit,y dir=both,error bar style={line width=0.5pt},error mark options={
      rotate=90,
      mark size=2pt,
      line width=0.5pt
    }] table [x={space}, y={rho_t1200}, y error = {rho_t1200_error}] {pd_CB2_sum_df.dat};
\legend{$t=400$ s}
\end{axis}
\end{tikzpicture} &
\begin{tikzpicture}	
\begin{axis}[
    xlabel={Cell index $i$},    ylabel={Density (veh/km)},
    xmin=0, xmax=21,    ymin=-1, ymax=350,
    xtick={0,4,8,12,16,20},    ytick={0,50,100,150,200,250,300,350},
    xticklabel style={/pgf/number format/fixed, /pgf/number format/precision=2},
	scaled x ticks=false,
    xtick pos=left,    ytick pos=left,    axis lines=left,
    width = 6cm,    height = 5cm,
    mark size=1.1pt,
    only marks,    x axis line style=-,    y axis line style=-,
    y label style={at={(axis description cs:0.12,.5)},anchor=south},
    font = \tiny,
    legend style={at={(1,1)},anchor=north east, draw=none, font=\tiny},
    legend cell align={left}
]
\addplot+[color36,mark=otimes,error bars/.cd,y explicit,y dir=both,error bar style={line width=0.5pt},error mark options={
      rotate=90,
      mark size=2pt,
      line width=0.5pt
    }] table [x={space}, y={rho_t1500}, y error = {rho_t1500_error}] {pd_CB2_sum_df.dat};
\legend{$t=500$ s}
\end{axis}
\end{tikzpicture}
\end{tabular}
\caption{Solution differential equations for $\rhob(t)$ and $\textrm{var}[\rhoh(t)]$, Example \ref{E3} - backward moving jam.}
\label{fig: results experiment 2}
\end{figure}

In this {section,} we discuss a number of possible and relevant extensions, generalizations and ramifications. To make the exposition as clear as possible, till this point we have worked with the basic model of a single road segment and a simple form for the infinitesimal generator of the underlying Markov chain. As we will make clear below, however, a considerably broader class of systems can be dealt with analogously. 

\subsection{Networks}
In the setup presented, we restricted ourselves to a single segment consisting of cells, each of them endowed with its own discrete flux-function. This flux-function, say from cell $i$ to cell $i+1$, was assumed to depend on the vehicle densities in the sending cell $i$ as well as the destination cell $i+1$. Upon inspecting the framework in \cite[Ch.\ 8]{kurtz1981approximation}, however, one sees that it is allowed to consider flux-functions that depend on the vehicle densities in \textit{all} cells in the segment. This observation makes it straightforward to generalize our framework to road segments, each consisting of a number of cells, that are embedded in a network (with nodes that connect  the first and last cells of the  segments). By extending \autoref{Assumption: MFD}, so that each $\tilde q_i$  is a function of the densities in an arbitrary set of cells, the fluid limit and diffusion limit results carry over, as well as the results on the travel times. 

As a special case of the network structure discussed above, on-ramps and off-ramps (to and from our road segment) can be covered. Considering a segment consisting of $d$ cells, 
if a ramp has to be placed adjacent to cell $i$, then this can be done 
by adding an extra cell, outside the segment, next to the cell~$i$. Depending on the direction of flow in the new cell, traffic will either merge from cell~$i-1$ and the new cell into cell~$i$, or it will diverge from cell~$i-1$ into cell~$i$ and the new cell, thus encoding whether it concerns an on-ramp or an off-ramp. 
%The flow out of cell~$0$ is to be understood  as the arrival process by convention here.

Of course, it is not immediately clear that, for a general network and an arbitrary MFD,  there exists a unique solution of the corresponding Riemann problem (now on a network), that in addition satisfies the regularity properties of  \autoref{Assumption: MFD}. In \cite[Section 2.3]{DaganzoNetworks}, it is explained how an arbitrary network  can be captured by  a network that only connects two upstream cells to one downstream cell, or vice versa; this reduction property  simplifies the problem of finding a discrete flux-function considerably. In particular, it features traffic merging from two cells into one, or traffic diverging from one into two cells (see \cite{DaganzoNetworks} for a schematic representation), making it particularly suitable for modelling on-ramps and off-ramps on a segment. In the remainder of \cite{DaganzoNetworks}, the discrete flux-function for the Daganzo MFD (i.e., our first example in \autoref{sec: example MFDs}) is found. Moreover, under this simplification of the network structure, one can solve the single-class case under fairly general conditions on the MFD; cf.\ \cite[Ch.\ 4, 5]{garavello2006}. For the multi-class case, to the best of the authors' knowledge, no general results {exist}, but there are  solutions for specific MFDs  (see \cite{ngoduy2010} and references therein). Likewise, for other proposed multi-class MFDs, which are often functions with strong regularity properties, one would envisage that solutions to the network Riemann problem can be found.

The generality of our model, and the flexibility of the framework of \cite[Ch.\ 8]{kurtz1981approximation}, {make} it possible to include behavior of vehicles that is specific to merging and diverging situations. More specifically, since discrete flux-functions in our model can be cell-dependent, we can model merging and diverging behavior of vehicles (or even more complicated flow structures), using special `merge' and `diverge' flux-functions. As long as these functions satisfy the regularity conditions in \autoref{Assumption: MFD}, the results from Sections \ref{sec: scaling limits} and \ref{sec: Travel Times} go through.

Finally, we can also explicitly model the routing of the vehicles through the network. In traffic flow theory, there are two major ways of  routing vehicles on networks. The first is to route deterministically. This can be done by working with origin-destination  pairs, where every vehicle has a fixed route from the  origin to the  destination; see{, e.g.,~}\cite[Ch.\ 7]{garavello2006}. In our setup we can incorporate this mechanism by treating vehicles using a specific route as a type, with suitable boundary conditions (describing the arrivals at the origin and departures at the destination). The second approach  is to route probabilistically,  see{, e.g.,~}\cite{DaganzoNetworks} or \cite[Section 5]{garavello2006}; think of the turning rates of vehicles at junctions. In the case where a number of flows merges into one flow at a junction, and when the system is saturated at the point of merging, a probability distribution is used to describe the contribution of the merged flows to the total flow out of the junction. This probability distribution  controls the rate at which vehicles from each of the merging cells yield to vehicles from the other cells.

%In the setup presented, we considered a segment consisting of cells, each of them endowed with its own MFD. Because the MFD can be cell-dependent, we can incorporate e.g.\ the effect that cells may differ in terms of the number of lanes or the presence of on- or off-ramps. It is also observed that, rather than just segments, general network topologies are allowed; in fact, the rate of moving from one given cell to another given cell is allowed to depend on the vehicle densities of {\it all}\, cells. It in particular means that merging and splitting roads are covered, in that we remain in the framework of  \cite[Ch.\ 8]{kurtz1981approximation}, entailing that the fluid limit and diffusion limit results carry over. In addition, the results on travel times extend to such settings.

%
%\begin{figure}
%\begin{tabular}{ccc}
%\hspace{-1.3cm}\includegraphics[width = 6cm]{CB_Experiment3_V2_t=0s.pdf} &
%\includegraphics[width = 6cm]{CB_Experiment3_V2_t=100s.pdf} &
%\includegraphics[width = 6cm]{CB_Experiment3_V2_t=200s.pdf} \\
%\hspace{-1.3cm}\includegraphics[width = 6cm]{CB_Experiment3_V2_t=300s.pdf} &
%\includegraphics[width = 6cm]{CB_Experiment3_V2_t=400s.pdf} &
%\includegraphics[width = 6cm]{CB_Experiment3_V2_t=500s.pdf} \\
%\end{tabular}
%\caption{Solution differential equations for mean and variance, Example \ref{E4}.}
%\label{fig: results experiment 3}
%\end{figure}

\begin{figure}
\begin{tabular}{ccc}
\hspace{-1.5cm}
\begin{tikzpicture}	
\begin{axis}[
    xlabel={Cell index $i$},    ylabel={Density (veh/km)},
    xmin=0, xmax=41,    ymin=-10, ymax=370,
    xtick={0,5,10,15,20,25,30,35,40},    ytick={0,50,100,150,200,250,300,350},
    xticklabel style={/pgf/number format/fixed, /pgf/number format/precision=2},
	scaled x ticks=false,
    xtick pos=left,    ytick pos=left,    axis lines=left,
    width = 6cm,    height = 5cm,
    mark size=1.0pt,
    only marks,    x axis line style=-,    y axis line style=-,
    y label style={at={(axis description cs:0.12,.5)},anchor=south},
    font = \tiny,
    legend style={at={(1,1)},anchor=north east, draw=none, font=\tiny},
    legend cell align={left}
]
\addplot+[color31,mark=otimes] table [x={space}, y={rho_t0}] {pd_CB3_sum_df.dat};
\legend{$t=0$ s}
\end{axis}
\end{tikzpicture} &
\begin{tikzpicture}	
\begin{axis}[
    xlabel={Cell index $i$},    ylabel={Density (veh/km)},
    xmin=0, xmax=41,    ymin=-10, ymax=370,
    xtick={0,5,10,15,20,25,30,35,40},    ytick={0,50,100,150,200,250,300,350},
    xticklabel style={/pgf/number format/fixed, /pgf/number format/precision=2},
	scaled x ticks=false,
    xtick pos=left,    ytick pos=left,    axis lines=left,
    width = 6cm,    height = 5cm,
    mark size=1.1pt,
    only marks,    x axis line style=-,    y axis line style=-,
    y label style={at={(axis description cs:0.12,.5)},anchor=south},
    font = \tiny,
    legend style={at={(1,1)},anchor=north east, draw=none, font=\tiny},
    legend cell align={left}
]
\addplot+[color32,mark=otimes,error bars/.cd,y explicit,y dir=both,error bar style={line width=0.5pt},error mark options={
      rotate=90,
      mark size=2pt,
      line width=0.5pt
    }] table [x={space}, y={rho_t100}, y error = {rho_t100_error}] {pd_CB3_sum_df.dat};
\legend{$t=100$ s}
\end{axis}
\end{tikzpicture} &
\begin{tikzpicture}	
\begin{axis}[
    xlabel={Cell index $i$},    ylabel={Density (veh/km)},
    xmin=0, xmax=41,    ymin=-10, ymax=370,
    xtick={0,5,10,15,20,25,30,35,40},    ytick={0,50,100,150,200,250,300,350},
    xticklabel style={/pgf/number format/fixed, /pgf/number format/precision=2},
	scaled x ticks=false,
    xtick pos=left,    ytick pos=left,    axis lines=left,
    width = 6cm,    height = 5cm,
    mark size=1.1pt,
    only marks,    x axis line style=-,    y axis line style=-,
    y label style={at={(axis description cs:0.12,.5)},anchor=south},
    font = \tiny,
    legend style={at={(1,1)},anchor=north east, draw=none, font=\tiny},
    legend cell align={left}
]
\addplot+[color33,mark=otimes,error bars/.cd,y explicit,y dir=both,error bar style={line width=0.5pt},error mark options={
      rotate=90,
      mark size=2pt,
      line width=0.5pt
    }] table [x={space}, y={rho_t200}, y error = {rho_t200_error}] {pd_CB3_sum_df.dat};
\legend{$t=200$ s}
\end{axis}
\end{tikzpicture}\\
\hspace{-1.5cm}
\begin{tikzpicture}	
\begin{axis}[
    xlabel={Cell index $i$},    ylabel={Density (veh/km)},
    xmin=0, xmax=41,    ymin=-10, ymax=370,
    xtick={0,5,10,15,20,25,30,35,40},    ytick={0,50,100,150,200,250,300,350},
    xticklabel style={/pgf/number format/fixed, /pgf/number format/precision=2},
	scaled x ticks=false,
    xtick pos=left,    ytick pos=left,    axis lines=left,
    width = 6cm,    height = 5cm,
    mark size=1.1pt,
    only marks,    x axis line style=-,    y axis line style=-,
    y label style={at={(axis description cs:0.12,.5)},anchor=south},
    font = \tiny,
    legend style={at={(1,1)},anchor=north east, draw=none, font=\tiny},
    legend cell align={left}
]
\addplot+[color34,mark=otimes,error bars/.cd,y explicit,y dir=both,error bar style={line width=0.5pt},error mark options={
      rotate=90,
      mark size=2pt,
      line width=0.5pt
    }] table [x={space}, y={rho_t300}, y error = {rho_t300_error}] {pd_CB3_sum_df.dat};
\legend{$t=300$ s}
\end{axis}
\end{tikzpicture} &
\begin{tikzpicture}	
\begin{axis}[
    xlabel={Cell index $i$},    ylabel={Density (veh/km)},
    xmin=0, xmax=41,    ymin=-10, ymax=370,
    xtick={0,5,10,15,20,25,30,35,40},    ytick={0,50,100,150,200,250,300,350},
    xticklabel style={/pgf/number format/fixed, /pgf/number format/precision=2},
	scaled x ticks=false,
    xtick pos=left,    ytick pos=left,    axis lines=left,
    width = 6cm,    height = 5cm,
    mark size=1.1pt,
    only marks,    x axis line style=-,    y axis line style=-,
    y label style={at={(axis description cs:0.12,.5)},anchor=south},
    font = \tiny,
    legend style={at={(1,1)},anchor=north east, draw=none, font=\tiny},
    legend cell align={left}
]
\addplot+[color35,mark=otimes,error bars/.cd,y explicit,y dir=both,error bar style={line width=0.5pt},error mark options={
      rotate=90,
      mark size=2pt,
      line width=0.5pt
    }] table [x={space}, y={rho_t400}, y error = {rho_t400_error}] {pd_CB3_sum_df.dat};
\legend{$t=400$ s}
\end{axis}
\end{tikzpicture} & 
\begin{tikzpicture}	
\begin{axis}[
    xlabel={Cell index $i$},    ylabel={Density (veh/km)},
    xmin=0, xmax=41,    ymin=-10, ymax=370,
    xtick={0,5,10,15,20,25,30,35,40},    ytick={0,50,100,150,200,250,300,350},
    xticklabel style={/pgf/number format/fixed, /pgf/number format/precision=2},
	scaled x ticks=false,
    xtick pos=left,    ytick pos=left,    axis lines=left,
    width = 6cm,    height = 5cm,
    mark size=1.1pt,
    only marks,    x axis line style=-,    y axis line style=-,
    y label style={at={(axis description cs:0.12,.5)},anchor=south},
    font = \tiny,
    legend style={at={(1,1)},anchor=north east, draw=none, font=\tiny},
    legend cell align={left}
]
\addplot+[color36,mark=otimes,error bars/.cd,y explicit,y dir=both,error bar style={line width=0.5pt},error mark options={
      rotate=90,
      mark size=2pt,
      line width=0.5pt
    }] table [x={space}, y={rho_t500}, y error = {rho_t500_error}] {pd_CB3_sum_df.dat};
\legend{$t=500$ s}
\end{axis}
\end{tikzpicture}
\end{tabular}
\caption{Solution differential equations for $\rhob(t)$ and $\textrm{var}[\rhoh(t)]$, Example \ref{E4} - Shocks.}
\label{fig: results experiment 3}
\end{figure}
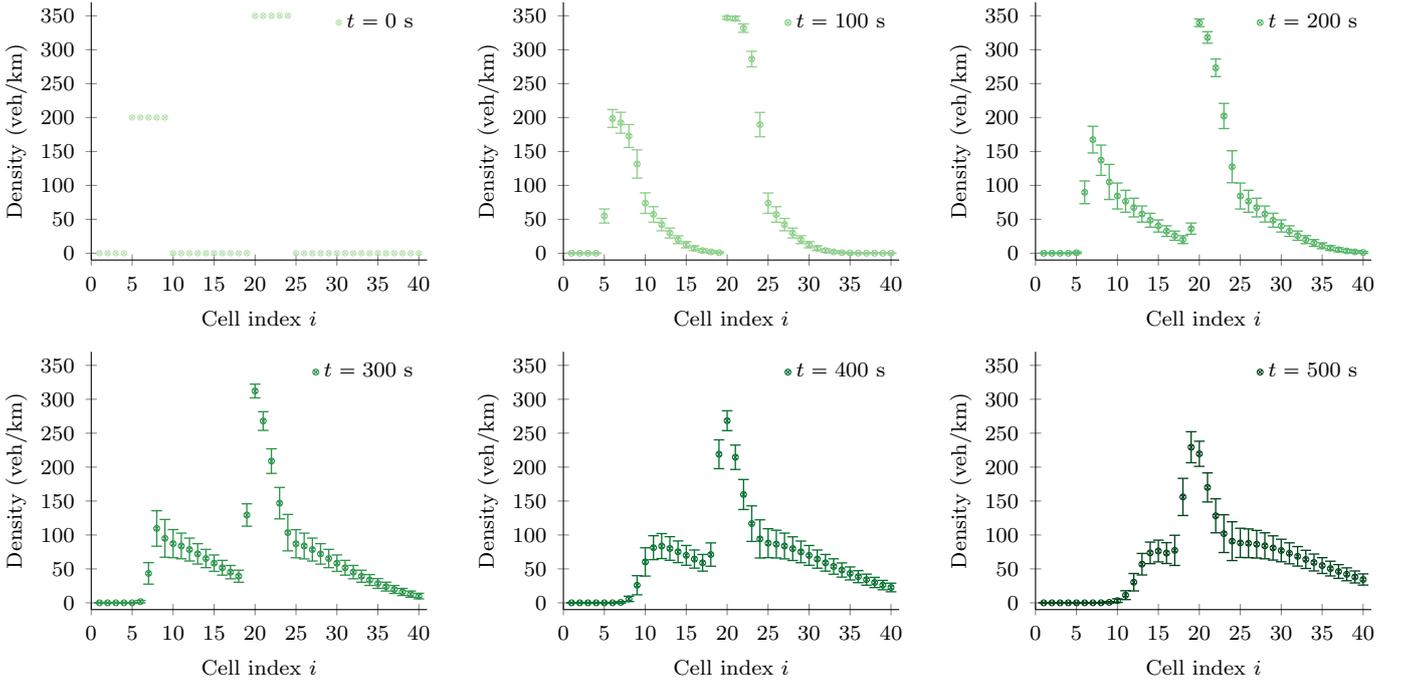

\subsection{Generalized transition rates}

In practice, the model's stochastic dynamics need not be homogeneous in time. One would{, for instance,} wish to accommodate a daily pattern with rush hours and more quiet periods. This can be realized by making the arrival rate (to cell 1) and departure rates (from cell $d$) 
%\red{JS: Moeten we hier eigenlijk iets zeggen over de departure rates als we het over dagpatronen qua spitsuur hebben?} 
time-dependent. To preserve the results in Sections~\ref{sec: scaling limits} and~\ref{sec: Travel Times}, we can scale the rates of  $Y(\cdot)$ by a factor $n$ (instead of scaling time), besides the scaling of the cell lengths; in the setup we have studied in the previous sections, in which the Poisson processes do not depend on time, scaling time and scaling the rates of  $Y(\cdot)$ are equivalent. To preserve the time-dependent arrival pattern though, we let the scaled process counting the number of type-$j$ arrivals to cell~$1$ be given by
\[
	\frac{1}{n \ell_i} Y_{0,j}\left( n \int_0^{t} \lambda_j(s) \wedge \sup_{x \in\mathbb{R}_+^k} \tilde q_{0,j}\left(x,(\rho^n_{1k}(s))_k \right) \diff s \right), \quad j \in \{1,\ldots,m\},
\]
where $\lambda_j(s)$ is the intensity of the type-$j$ Poisson arrival process at time $s \geq 0$. Clearly, $\lambda_j(s)$ does not depend on $\rho^n(t)$, so that the minimum of these functions is still Lipschitz in $\rho^n(t)$ by \autoref{Assumption: MFD}.

Additionally, we can let the discrete flux-function from \autoref{Assumption: MFD} be stochastic itself, as was suggested in \cite{QU2017}. As long as the fluid limit of the associated Poisson processes is  deterministic and continuous, the results from Sections~\ref{sec: scaling limits} and \ref{sec: Travel Times} remain valid, albeit with a different limit. One typically gets that in the fluid limit all quantities are replaced by their time-average counterpart, whereas in the 
diffusion limit one obtains a larger variance, as a consequence of  the uncertainty that has been added to the model. One possible way to obtain a stochastic MFD would be to periodically resample the MFD's parameters.
%, which can be interpreted as incorporating driver heterogeneity within each of the vehicle classes. 
Another option is to let the model's transition rates depend on exogenous influences (for instance the weather). This could be achieved by introducing an external (e.g.\ Markovian) background process, where the state of this process determines the current value of the transition rates. We refer to{, e.g.,}  \cite{Marijn} for a derivation of fluid and diffusion limits in a related type of networks, viz.\ a network of Markov-modulated infinite-server queues. In other {contexts,} a similar methodology has been used{, in e.g.,} \cite{gang,spreij2019}.

Finally, we can relax of the assumption of the cell-transition times being 
exponentially distributed to being phase-type distributed. This is a useful extension, as any non-negative distribution can be approximated arbitrarily closely by an appropriately chosen phase-type distribution. It means that the time until a vehicle jumps to the next cell consists of multiple  phases, where the time spent in each of the phases should be exponentially distributed. The transition rates should depend on the number of vehicles in each phase, both in the sending cell and the receiving cell, aggregated over the phases (so as to reflect the per-cell vehicle density). Importantly, such an extension still fits the framework developed in \cite{kurtz1981approximation}, so that the results from \autoref{sec: scaling limits} and \autoref{sec: Travel Times} carry over.

\section{Concluding Remarks}\label{CR}
In this {paper,} we have developed a Markovian model for vehicle densities in a multiple-type road traffic network, with dynamics that are consistent with state-of-the-art traffic-flow models. As this model does not allow any explicit analysis, we have resorted to an asymptotic framework. More concretely, we have established a fluid and diffusion limit: scaling the lengths of the cells and time by $n$, and appropriately centering and normalizing the vehicle densities, the resulting process converges, as $n\to\infty$, to a Gaussian process. This diffusion limit can be used to produce an approximation for the vehicle density distribution in time. Along the same lines, an approximation for the travel-time distribution has been developed. In a set of numerical experiments we have concluded that  the resulting approximations are highly accurate; in addition, we have shown that our model is capable of reproducing various known traffic phenomena.

{\subsubsection*{Acknowledgments}
The authors are grateful to Dieter Fiems (University of Ghent) for helpful suggestions regarding the  travel-time analysis. We also would like to thank Sandjai Bhulai (Vrije Universiteit Amsterdam) for providing useful feedback.}

\bibliographystyle{abbrv} %abbrv ipv 'plain' -> doet afkorten --> zie https://www.sharelatex.com/learn/Bibtex_bibliography_styles
% kan ook: achternaam, voorletter <-- gebruik 'acm' ipv 'abbrv'
%\bibliographystyle{plainnat}
\bibliography{segmentbibliography}

\end{document}